\theoremstyle{definition}
\newtheorem{definition}{Definition}[section]
\theoremstyle{plain}
\newtheorem{prop}{Proposition}[section]
\theoremstyle{plain}
\newtheorem{theorem}{Theorem}[section]
\theoremstyle{plain}
\newtheorem{corollary}{Corollary}[section]
\theoremstyle{plain}
\newtheorem{lemma}{Lemma}[section]
\theoremstyle{remark}
\newtheorem{remark}{Remark}[section]
\theoremstyle{remark}
\theoremstyle{plain}
\newtheorem{conjecture}{Conjecture}[section]
\theoremstyle{plain}
\newtheorem{question}{Question}[section]
\title{Punctured JSJ tori and tautological extensions of Azumaya algebras}
\author{Yi Wang}
\date{}
\begin{document}
	
\maketitle
	
\begin{abstract}
The $SL_2(\mathbb{C})$ character variety $X(M)$ has emerged as an important tool in studying the topology of hyperbolic 3-manifolds. In \cite{Chinburg2017AzumayaAA}, Chinburg-Reid-Stover constructed arithmetic invariants stemming from a canonical Azumaya algebra over the normalization of an irreducible component of $X(M)$ containing a lift of the holonomy representation of $M$. We provide an explicit topological criterion for extending the canonical Azumaya algebra over an ideal point, potentially leading to finer arithmetic invariants than those derived in \cite{Chinburg2017AzumayaAA}. This topological criterion involves Culler-Shalen theory \cite{Culler1983VarietiesOG} and, in some cases, JSJ decompositions of toroidal Dehn fillings of knot complements in the three-sphere. Inspired by the work of Paoluzzi-Porti \cite{Paoluzzi2010ConwaySA} and Tillmann \cite{Tillmann2003VarietiesAT}, \cite{Tillmann2005DegenerationsOI}, we provide examples of several cases where these refined invariants exist. Along the way, we show that certain families of Seifert surfaces in hyperbolic knot complements can be associated to ideal points of character varieties.
\end{abstract}

\section{Introduction}


The \emph{trace field} and \emph{quaternion algebra} associated to a hyperbolic knot $M$ are two important invariants (see \cite{Maclachlan2002TheAO}). Chinburg-Reid-Stover \cite{Chinburg2017AzumayaAA} studied the arithmetic of a \emph{canonical quaternion algebra} $A_{k(C)}$ that expands the quaternion algebra over entire components of the $SL_2(\mathbb{C})$ character variety $X(M)$. 

\medskip

An \emph{Azumaya algebra} over a scheme is a generalization of a central simple algebra over a field $k$. As discussed in Chapter 4 of \cite{Milne1980EtaleC}, Azumaya algebras are intimately connected with etale cohomology and the theory of \emph{Brauer groups} (i.e. the group of Azumaya algebras over a scheme with respect to a certain equivalence relation). Brauer groups arise in number theory and arithmetic geometry in many ways. They play a central role in local and global class field theory because of their connection to the second cohomology groups of the multiplicative groups of local and global fields \cite{Milne1980EtaleC}, \cite{SerreLocalFields}. The Brauer group of a smooth projective variety over a field is a birational invariant, and this can be used to show that various varieties are not rational \cite{ColliotThelene}. Artin conjectured that every proper scheme over the integers has a finite Brauer group (\cite{Milne1980EtaleC}, Question IV.2.19). This generalizes the conjectured finiteness of the so-called Tate-Shafarevitch group of the Jacobian of curves over number fields \cite{LeGroupDeBrauerIII}. Manin used the Brauer group to define the Brauer-Manin obstruction to the existence of rational points on smooth projective varieties over number fields \cite{Lang}. This obstruction has been used to prove a number of important results such as the vanishing of all Massey products of order greater than $2$ in the Galois cohomology of number fields \cite{HarpazWittenberg}. 

\medskip

In the realm of 3-manifold topology, the Chinburg-Reid-Stover invariant \cite{Chinburg2017AzumayaAA} can be viewed as either an equivalence class of Azumaya algebras or a 2-torsion Brauer group element of a projectivized normalized irreducible component $\tilde{C}$ of $X(M)$. This invariant is then shown to contain information about various aspects of hyperbolic 3-manifolds, such as ramification of quaternion algberas associated to hyperbolic Dehn fillings, orderability of $\pi_1(M)$, and $SU(2)$ representations. 

\medskip

A central question addressed in \cite{Chinburg2017AzumayaAA} was to find maximal open subsets $U$ of $\tilde{C}$ over which $A_{k(C)}$ can be extended to an Azumaya algebra $A_U$. The class $[A_U]$ in the Brauer group $Br(U)$ is determined by the class $[A_{k(C)}]$ in $Br(k(C))$, and that $[A_{k(C)}]$ determines and is determined by the isomorphism class of $A_{k(C)}$ as a quaternion algebra over $k(C)$. However, $[A_U]$ need not determine the isomorphism class of $A_U$. In this sense the work in \cite{Chinburg2017AzumayaAA} is not explicit, since it does not specify the isomorphism class of $A_U$. Rather, a purely algebraic argument involving tame symbols is used to show there is some $A_U$ defining a class $[A_U]$ that has image $[A_{k(C)}]$ in $Br(k(C))$.

\medskip

In order to refine the invariants constructed in \cite{Chinburg2017AzumayaAA}, we construct explicit extensions of $A_{k(C)}$ over ideal points that involve the topology of $M$. We define an extension of $A_{k(C)}$ over ideal points called \emph{tautological extension}, which in some cases comes directly from geometric decompositions of exceptional Dehn fillings of the knot complement. We prove the following theorem. See Section \ref{sec:background} for detailed definitions of the terms.

\begin{restatable}{thm}{main}\label{thm:main}
Let $M = S^3 \setminus K$ be a hyperbolic knot complement, and let $\{T_i\}_{i=1}^n \subset M$ be a system of disjoint non-parallel once-punctured JSJ tori with slope 0. Suppose the following:
\begin{itemize}
	\item All irreducible components of $X(M)$ containing irreducible characters are norm curves.
	\item No JSJ complementary region of $M(0)$ is Seifert fibered over the annulus.
	\item No JSJ torus in $M(0)$ bounds two hyperbolic components on both sides.  
	\end{itemize}
Then $\bigcup_{i=1}^nT_i$ is detected by an ideal point $x$ on a norm curve $\tilde{C} \subset \widetilde{X(M)}$, and $A_{k(C)}$ tautologically extends over $x$.
\end{restatable}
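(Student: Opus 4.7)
The plan is to build both the ideal point $x$ and the tautological extension of $A_{k(C)}$ over $x$ from a single geometric input: the JSJ decomposition of $M(0)$. I would assemble a family of $SL_2(\mathbb{C})$ characters of $\pi_1(M)$ that factor through the surjection $\pi_1(M) \twoheadrightarrow \pi_1(M(0))$, built piece-by-piece from the JSJ graph of groups of $M(0)$. On each hyperbolic JSJ component, one takes a deformation of the discrete faithful representation along its own character variety; on each Seifert piece, one picks a compatible reducible representation (abelian on the regular fiber). Gluing along the JSJ tori $T_i$ uses the third hypothesis: because no $T_i$ has hyperbolic pieces on both sides, at least one side is Seifert and hence flexible enough to absorb the matching along the peripheral $\mathbb{Z}^2$. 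The hypothesis ruling out Seifert-over-annulus pieces removes spurious deformation parameters that would muddle the identification of the curve to be constructed.

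I would next take the Zariski closure of this family to get an irreducible curve $C \subset X(M)$ containing irreducible characters, which is a norm curve by the first hypothesis. Allowing the complex length parameter on one hyperbolic piece to diverge produces a path approaching an ideal point $x$ of the normalization $\tilde{C}$. Culler--Shalen theory at $x$ yields a nontrivial action of $\pi_1(M)$ on a simplicial tree whose edge stabilizers are conjugate to peripheral subgroups of the JSJ tori; the dual essential surface should then be $\bigcup_i T_i$, since the two geometric hypotheses exclude the additional annular or Seifert-over-annulus components that the Culler--Shalen construction could a priori yield. Boundary slope $0$ comes from the fact that the longitude is trivial in $\pi_1(M(0))$, so it fixes the tree, and the Newton-polygon analysis for norm curves then forces exactly slope $0$. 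With the ideal point identified, I would construct $A_U$ over a Zariski open neighborhood $U$ of $x$ by gluing: on each hyperbolic JSJ piece the limiting holonomy provides an explicit Azumaya extension of the quaternion algebra over $\mathcal{O}_{\tilde{C},x}$, while on each Seifert piece the limiting representation is reducible, so the associated quaternion algebra is split and extends trivially. The third hypothesis is used once more: at each $T_i$ one side is split, so the gluing reduces to choosing a splitting of the peripheral subalgebra from the hyperbolic side, which is tautologically provided by the diagonal form of the holonomy along the peripheral torus.

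The main obstacle will be identifying exactly which essential surface Culler--Shalen detects at $x$. A priori the associated Bass--Serre tree only constrains the surface up to boundary-parallel and annular noise, and extra compatibility checks -- comparing the tree at $x$ with the Bass--Serre tree of $\pi_1(M(0))$ acting on the tree dual to its JSJ decomposition -- are needed to conclude the detected surface is precisely $\bigcup_i T_i$ with the correct slope. The three hypotheses are tailored to eliminate the relevant noise, so this step is largely a bookkeeping exercise verifying that each excluded configuration would either force the detected surface to contain an unwanted component or obstruct the existence of the norm-curve deformation at the start. Once the ideal point and its detected surface are pinned down, the tautological extension follows in a near-functorial way from the JSJ data, essentially because the only nontrivial local contribution comes from the hyperbolic pieces and those are already Azumaya over their own character varieties.
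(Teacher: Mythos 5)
There is a fundamental misconception about the Seifert pieces. You treat each Seifert JSJ component as carrying a \emph{reducible} limiting representation (``abelian on the regular fiber,'' ``the associated quaternion algebra is split and extends trivially''). In fact, the second hypothesis together with a dimension count forces every Seifert piece to be a cable space, i.e.\ Seifert-fibered over a hyperbolic $2$-orbifold $A^2(q)$, and the limiting character on such a piece is the trace of the \emph{irreducible} holonomy of $A^2(q)$, not an abelian representation. The tautological extension requires an irreducible limiting character on some complementary region (Proposition \ref{prop:limitingrep}); under your scheme, if every JSJ piece happens to be Seifert --- which the hypotheses allow --- you would conclude the limiting character is everywhere reducible and the argument would produce nothing.

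The mechanism producing the ideal point is also off. You propose letting a complex length parameter diverge on a hyperbolic piece. But in the actual argument the holonomy representations on each JSJ piece are held \emph{fixed}, so all traces on the complementary regions stay finite; the ideal point arises precisely because these finite limiting characters have matching traces on the JSJ tori but come from \emph{nonconjugate} representations of the peripheral $\mathbb{Z}^2$. That mismatch is what prevents the approximating family $\{\alpha_j\}\subset X(M)$ from converging to a genuine character of $\pi_1(M)$ and forces convergence to an ideal point. The third hypothesis is used exactly here: a JSJ torus with hyperbolic pieces on both sides could in principle have matching cusp shapes and conjugate restrictions, so the family might converge to an honest character. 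Your use of that hypothesis to ``split the peripheral subalgebra'' plays no role in the proof. You also omit the $SL_2(\mathbb{C})$-compatibility step: lifting the $PSL_2(\mathbb{C})$ holonomies to $SL_2(\mathbb{C})$ so that the \emph{signs} of traces agree on the boundary tori requires a nontrivial $\mathbb{Z}/2\mathbb{Z}$-homology argument using the ``half lives, half dies'' theorem, and without this the gluing of traces in $V$ cannot even be set up.
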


This theorem establishes a connection between the topology of some hyperbolic knot complements and the arithmetic of their character varieties while refining the invariant constructed in \cite{Chinburg2017AzumayaAA}. The underlying mechanism of the theorem is that hyperbolic holonomies of JSJ complementary regions of toroidal Dehn fillings of hyperbolic knots are reflected in ideal points of character varieties. This phenomenon has been exhibited in previous results of Tillmann \cite{Tillmann2005DegenerationsOI} and Paoluzzi-Porti \cite{Paoluzzi2010ConwaySA}. 

\medskip

We also demonstrate an infinite family of examples where Theorem \ref{thm:main} applies. Let $J(b_1, b_2)$ be the infinite family of two-bridge knots studied in \cite{Macasieb2009OnCV}.

\begin{corollary}\label{cor:twobridge}
Theorem \ref{thm:main} applies to the Seifert surfaces of the complements of $K = J(b_1, b_2)$ where:
\begin{enumerate}
	\item $|b_1|, |b_2| > 2$ are even.
	\item $|b_1| = 2$, $|b_2| > 2$ is even.
	\item $b_1$ is odd and $|b_2| = 2$. These are the twist knots.
\end{enumerate}
\end{corollary}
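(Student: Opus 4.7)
The plan is to verify, for each of the three families of two-bridge knots $J(b_1,b_2)$, that $M = S^3 \setminus J(b_1,b_2)$ admits a system $\{T_i\}_{i=1}^n$ of disjoint non-parallel once-punctured JSJ tori of slope $0$ arising from Seifert surfaces, and that the three bulleted hypotheses of Theorem \ref{thm:main} hold.

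First I would identify the required system of once-punctured tori. For twist knots (case (3)) the standard genus-one Seifert surface is itself a once-punctured torus with longitudinal boundary, giving a single $T_1$. For the even cases (1) and (2), the Seifert surfaces of $J(b_1,b_2)$ are built as iterated plumbings (Murasugi sums) of Hopf bands, and I would decompose them into once-punctured tori by cutting along the plumbing disks. Non-parallelism of the resulting $T_i$ follows from the plumbing combinatorics; the claim that they are JSJ tori will follow from identifying the closed-up $T_i$ with JSJ tori of $M(0)$, which is the manifold whose geometric decomposition we analyze in the final step.

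Next, I would dispatch the norm-curve hypothesis by appealing directly to the explicit computations in Macasieb-Mattman \cite{Macasieb2009OnCV}. Their description of the defining polynomials of the irreducible components of $X(J(b_1,b_2))$ containing irreducible characters is sufficiently concrete that, in each of the three cases above, one can read off that the trace function of a peripheral element is non-constant on each such component. This is exactly the definition of a norm curve, so the first bullet holds automatically in all three families.

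The main obstacle, and the heart of the proof, is the case-by-case analysis of the JSJ decomposition of $M(0)$ needed for the remaining two bullets. I would proceed family by family, identifying the JSJ pieces of $M(0)$ as Seifert-fibered regions over specified base orbifolds, possibly separated by hyperbolic pieces, and then checking that no piece is Seifert fibered over an annulus and that every JSJ torus has at least one Seifert-fibered side. For twist knots this follows from the classical description of $M(0)$ as a graph manifold with a tractable JSJ graph. For the even two-bridge knots I would adapt arguments in the spirit of Paoluzzi-Porti \cite{Paoluzzi2010ConwaySA} to enumerate the JSJ pieces and their base orbifolds. The most delicate point will be to ensure that no pair of hyperbolic JSJ regions of $M(0)$ meets along a torus; I expect this to follow from the observation that the tori obtained by cutting the Seifert surface admit Seifert-fibered neighborhoods in $M(0)$, so every hyperbolic piece is insulated by Seifert-fibered buffers coming from the plumbing structure.
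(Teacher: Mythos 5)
Your proposal has the right overall shape---identify the once-punctured JSJ tori, verify the norm-curve hypothesis, and check the JSJ conditions on $M(0)$---but it misses the clean route the paper actually takes, and in doing so introduces gaps.

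The paper's proof is essentially a citation chain. The norm-curve hypothesis is established not just by reading off trace functions from Macasieb--Mattman's polynomials, but by combining Theorem 1.2 of \cite{Macasieb2009OnCV} (irreducibility of the relevant components) with Theorem 7.3 of \cite{Petersen2014GonalityAG} (which upgrades this to the norm-curve property). Your suggestion that one can ``read off'' non-constancy of peripheral traces directly from Macasieb--Mattman is plausible but is a different and less structural argument than the one used. More seriously, the paper handles the last two bulleted hypotheses of Theorem \ref{thm:main} by invoking the complete classification, due to Patton \cite{Patton1995IncompressiblePT} and Sekino \cite{Sekino} (recorded as Theorem \ref{thm:patton}), of punctured JSJ tori in two-bridge knot complements and the JSJ pieces of the corresponding toroidal fillings. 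For the three families in question this classification says flatly that every JSJ complementary region of $M(0)$ is a cable space, which disposes of both remaining hypotheses at once: cable spaces are Seifert-fibered over $A^2(q)$ with $q \geq 2$ (so not over the annulus), and there are no hyperbolic pieces at all, so no JSJ torus can bound two hyperbolic sides.

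Your proposal instead tries to re-derive this classification. Two concrete problems arise. First, the description of the surfaces in cases (1) and (2) is miscast: these knots are genus one, so their Seifert surfaces are \emph{already} once-punctured tori; there is no larger plumbed surface to cut apart along plumbing disks. The phenomenon in case (1) is that there are two non-isotopic genus-one Seifert surfaces, which is a fact about the knot, not a decomposition of a single larger surface. Second, your strategy for ruling out two hyperbolic pieces meeting along a torus rests on a speculative ``Seifert-fibered buffer'' picture with no supporting argument; the paper shows this condition holds because the classification produces no hyperbolic pieces at all. Without either the citation to Patton/Sekino or an independent re-derivation of their classification, the final step of your proof does not go through.
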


By the work in \cite{Patton1995IncompressiblePT}, this family of knots is exactly the genus one nonfibered hyperbolic two-bridge knots. 

\subsection{Outline of the paper}

Section \ref{sec:background} covers the necessary background to understand the methods and theorems in this paper. Section \ref{sec:lemmas} establishes various definitions and lemmas for the proof of Theorem \ref{thm:main}. In particular, we define the notions of tautological Azumaya extension and the limiting character, then prove numerous lemmas that are crucial to the proof of Theorem \ref{thm:main}. Section \ref{sec:proof} contains the proof of Theorem \ref{thm:main}. Section \ref{sec:conjectures} provides the proof of Corollary \ref{cor:twobridge} and poses further conjectures to generalize Theorem \ref{thm:main}.

\subsection{Acknowledgements}

The author would like to thank Ted Chinburg, Matthew Stover, and Dave Futer for helpful discussions. The author would also like to thank Ted Chinburg, Tam Cheetham-West, and Khanh Le for their comments on an earlier draft.

\section{Background}\label{sec:background}

\subsection{Character varieties and Culler-Shalen theory}

In this section, we will follow \cite{Culler1983VarietiesOG}.

\medskip

One method of studying hyperbolic 3-manifolds comes from \emph{character varieties} of $\pi_1(M)$, where $M$ is a hyperbolic 3-manifold. Let $\mathbb{H}^3$ be the hyperbolic upper half space, i.e. 
\begin{equation}
	\mathbb{H}^3 = \{(x, y, z) \in \mathbb{R}^3 \mid z > 0\}
\end{equation}
with metric
\begin{equation}
	ds^2 = \frac{dx^2+dy^2+dz^2}{z^2}
\end{equation}
Note that $M$ is homeomorphic to $\mathbb{H}^3/\Gamma$, where $\Gamma \cong \pi_1(M)$ is a discrete subgroup of the group $\text{Isom}^+(\mathbb{H}^3) \cong PSL_2(\mathbb{C}) = SL_2(\mathbb{C})/\{\pm I\}$. 

\begin{definition}
The discrete faithful embedding $\rho_0: \pi_1(M) \to PSL_2(\mathbb{C})$ is the \emph{holonomy representation} of $M$. 
\end{definition}

The space of traces of representations $\pi_1(M) \to PSL_2(\mathbb{C})$, denoted $Y(M)$, is an affine algebraic set.

\begin{definition}
Call $Y(M)$ the \emph{$PSL_2(\mathbb{C})$-character variety of $M$}.
\end{definition}  

Locally around $\rho_0$, this variety can be viewed as the space of \emph{deformations} of the holonomy representation. It is a consequence of Mostow's rigidity theorem that for a closed hyperbolic 3-manifold $M$, $Y(M)$ is a finite discrete set of points. Note that in general $Y(M)$ is not necessarily irreducible as an algebraic set. 

\begin{definition}
Let $Y_0(M)$ denote the irreducible component of the $PSL_2(\mathbb{C})$ character variety containing the holonomy representation, called the \emph{$PSL_2(\mathbb{C})$-canonical component}.
\end{definition} 

For a hyperbolic 3-manifold with boundary the union of tori, the complex dimension of $Y_0(M)$ is equal to the number of torus boundary components, by a theorem of Thurston.

\medskip

It is often convenient to work with $SL_2(\mathbb{C})$ as opposed to $PSL_2(\mathbb{C})$. One reason is that $SL_2(\mathbb{C})$ actually consists of matrices (as opposed to equivalence classes of matrices), which allows one to work with algebraic concepts such as traces and quaternion algebras, as we will later see. Thurston showed that the holonomy representation of any hyperbolic 3-manifold can be lifted to $SL_2(\mathbb{C})$. This argument was detailed in \cite{Culler1983VarietiesOG}. The space of traces of representations $\pi_1(M) \to SL_2(\mathbb{C})$ is parameterized by traces of combinations of generators of $\pi_1(M)$, and is also an algebraic set. 

\begin{definition}
Call this the \emph{$SL_2(\mathbb{C})$ character variety of $M$}, denoted $X(M)$.
\end{definition} 

Culler showed in \cite{Culler1986LiftingRT} that the entire $PSL_2(\mathbb{C})$ canonical component lifts to an irreducible component of the $SL_2(\mathbb{C})$ character variety, which is called the \emph{canonical component of $M$}, and will be denoted $X_0(M)$.

\medskip

It is well-known that the lift of the holonomy representation from $PSL_2(\mathbb{C})$ to $SL_2(\mathbb{C})$ is not unique; there are various interpretations of the number of such lifts. The one relevant to this paper is that given a representation $\rho: G \to PSL_2(\mathbb{C})$, the number of lifts into $SL_2(\mathbb{C})$ are in bijection with the first group cohomology $H^1(G; \mathbb{Z}/2\mathbb{Z})$, i.e. homomorphisms $\pi_1(M) \to \mathbb{Z}/2\mathbb{Z}$. These homomorphisms signify how one can flip signs of lifts to $SL_2(\mathbb{C})$ of $PSL_2(\mathbb{C})$ coming from $G$. The (co)-homological interpretation of these lifts will be pivotal to the discussion of $SL_2(\mathbb{C})$-compatibility in Section \ref{sec:lemmas}. 

\medskip

The work of many authors have established $Y(M), Y_0(M), X(M)$, and $X_0(M)$ as important tools in studying the topology of hyperbolic 3-manifolds. In particular, the work of Culler-Shalen (\cite{Culler1983VarietiesOG}, \cite{Culler1984BoundedSI}) connected $SL_2(\mathbb{C})$-character varieties with the study of \emph{essential surfaces}, i.e. embedded two-sided $\pi_1$-injective surfaces that are not boundary-parallel. We provide a brief summary here.

\medskip

Let $C$ be a complex 1-dimensional curve in $X(M)$, let $\overline{C}$ be the projective completion of $C$, and let $\tilde{C}$ be the normalization of $\overline{C}$. An \emph{ideal point} of $C$ is an element $x \in \tilde{C}$ that is birationally identified with a point in $\overline{C} \setminus C$; it can informally be described as a ``point at infinity". The work in \cite{Culler1983VarietiesOG} describes the following process to attribute an essential surface to any ideal point $x \in \widetilde{C}$.
\begin{itemize}
	\item Let $D$ be a curve lying in the projectivized $SL_2(\mathbb{C})$ representation variety that maps to the component $\overline{C}$ via the trace function. Then $P_C: \pi_1(M) \to SL_2(F)$, for $F = k(D)$, is the \emph{tautological representation} 
	\begin{equation}
		P_C(g) = \begin{pmatrix} f_{11}(g) & f_{12}(g) \\ f_{21}(g) & f_{22}(g)\end{pmatrix} \in SL_2(F)
	\end{equation}
	where $f_{ij}(g): D \to \mathbb{C}$ is a function that, evaluated at $\rho \in D$, outputs the $ij$th entry of $\rho(g)$, for $\rho \in D$. Note that $P_C$ is associated to the generic point of $D$.
	\item An ideal point $x$ gives rise to a discrete valuation $v_x$ on the function field of $C$ (defined by the polarity of the function at $x$), which extends to a valuation $v_{\tilde{x}}$ on $F$. Here $\tilde{x} \in \tilde{D}$ is an ideal point of $D$ lying over $x$ via the trace map.
	\item The valuation $v_{\tilde{x}}$ defines a tree called the \emph{Bass-Serre tree}. Roughly, the vertices of the tree are lattices inside $F^2$ which are integral with respect to $v_{\tilde{x}}$ up to scaling, and two vertices are connected by an edge if there exist lattice representatives $L$ and $L'$ such that $\pi L \subsetneq L' \subsetneq L$, where $\pi$ generates the valuation ring of $v_{\tilde{x}}$, denoted $\mathcal{O}_{v_{\tilde{x}}}$.
	\item The tautological representation $P_C$ gives rise to a group action of $\pi_1(M)$ on the Bass-Serre tree. This action satisfies the property that no edges are reversed and that stabilizers of vertices are conjugate to a subgroup of $SL_2(\mathcal{O}_{v_{\tilde{x}}})$.
	\item Consider an equivariant map from the universal cover of $M$ to the tree; an essential surface is obtained by taking the inverse image of the midpoints of edges of the tree and taking an appropriate surface in the isotopy class.
\end{itemize}

\begin{definition}
We call an essential surface arising from this construction a \emph{detected essential surface}.
\end{definition}

If the detected essential surface $S$ has boundary, $\partial S$ is a curve lying on the boundary torus which corresponds to an element of $\pi_1(\partial M) = \pi_1(T^2)$. 

\begin{definition}
Let $\partial S \in \pi_1(T^2)$ be the \emph{boundary slope} associated to $S$.
\end{definition}

It is a subtle problem to determine which essential surfaces and boundary slopes are detected by ideal points of character varieties. For instance, in \cite{Chesebro2005NotAB} it was shown that there exist boundary slopes in hyperbolic knot complements that are not detected by ideal points in the character variety. In \cite{Paoluzzi2010ConwaySA}, certain families of essential Conway spheres in hyperbolic knot complements were shown to always be detected by some ideal point in the character variety.

\subsection{Azumaya algebras}

We first define a special type of component of the character variety $X(M)$.

\begin{definition}\cite{Boyer2002OnTA}
A \emph{norm curve} is a component $C$ of the $SL_2(\mathbb{C})$ character variety such that given any nontrivial peripheral torus group element $g \in \pi_1(\partial M)$, $\text{tr}(g)$ is nonconstant on $C$.
\end{definition}

Norm curves were used by Boyer-Zhang in the proof of the finite filling conjecture \cite{Boyer2001APO}. In \cite{Chinburg2017AzumayaAA}, Chinburg-Reid-Stover defined arithmetic invariants of hyperbolic knots via the \emph{canonical Azumaya algebra} on $X_0(M)$. In fact, by Remark 1.3 in \cite{Chinburg2017AzumayaAA}, the invariants constructed in that paper exist over any norm curve. We will work with norm curves for the bulk of this paper, though the case of $X_0(M)$ (which is a norm curve by Proposition 2 in \cite{Culler1984BoundedSI}) is the most interesting.

\medskip

Let $C$ be any norm curve of a one-cusped hyperbolic 3-manifold $M$, $\overline{C}$ the projectivization. Let $F = k(D)$ be the function field of a curve $D$ lying over $\tilde{C}$. Recall the tautological representation $P_C: \pi_1(M) \to SL_2(F)$ defined in the previous section. Then
\begin{equation}
	A_{k(C)} = \left\{\sum_{i=1}^n\alpha_iP_C(\gamma_i) \mid \alpha_i \in k(C), \gamma_i \in \pi_1(M) \right\}
\end{equation}
is the \emph{canonical quaternion algebra} \cite{Chinburg2017AzumayaAA}. By the work of \cite{Maclachlan2002TheAO}, $A_{k(C)}$ is a four-dimensional central simple algebra over the function field $k(C)$, hence the name. Informally, an \emph{Azumaya algebra} is the generalization of the notion of a central simple algebra to rings, as opposed to fields (see \cite{Milne1980EtaleC} or Section 3 of \cite{Chinburg2017AzumayaAA} for more background).

\begin{definition}
Let $R$ be a ring with residue field $k$. An algebra $A$ over $R$ is an \emph{Azumaya algebra over $R$} if $A$ is free of finite rank $r \geq 1$ as an $R$-module, and $A \otimes k$ is a central simple algebra.
\end{definition}

One of the main problems in \cite{Chinburg2017AzumayaAA} is determining when $A_{k(C)}$ can be viewed as an Azumaya algebra over local rings of points on the canonical component. 

\begin{definition}
Let $x \in \widetilde{C}$ be a codimension one point. We say that $A_{k(C)}$ \emph{extends over $x$} if there exists an Azumaya algebra $A_x$ over $\mathcal{O}_x$ (the local ring at $x$) such that $A_{k(C)} \cong A_x \otimes_{\mathcal{O}_x} k(C)$. 
\end{definition}

On points corresponding to irreducible representations, say $\chi_\rho \in C$ with $\rho$ irreducible, the proof of Proposition 4.1 in \cite{Chinburg2017AzumayaAA} shows that $A_{k(C)}$ extends over $\chi_\rho$. The proof that $A_{k(C)}$ extends over ideal points of $C$ relies on tame symbol arguments, which produce equivalence classes of Azumaya algebras. This construction is entirely algebraic.

\medskip

One motivating question for the results in this paper is to find an explicit canonical extension of $A_{k(C)}$ over ideal points, as opposed to showing an equivalence class exists. In addition, we would like this canonical extension to be related to the geometry and topology of $M$. This would be a refinement of the invariant constructed in \cite{Chinburg2017AzumayaAA}, in the following sense. Let $X$ be a scheme. By the proof of Theorem 2.5 in \cite{Milne1980EtaleC}, the set of isomorphism classes of quaternion Azumaya algebras over $X$ is in bijection with $\check{H}^1(X_{et}, PSL_2)$. From the exact sequence of sheaves 
\begin{equation}
	1 \to \mu_2 \to SL_2 \to PSL_2 \to 1
\end{equation}
we get the cohomology exact sequence
\begin{equation}
	\check{H}^1_{et}(X, SL_2) \to \check{H}^1_{et}(X, PSL_2) \to \check{H}^2_{et}(X, \mu_2)
\end{equation}
Let $C$ be a norm curve in a character variety $X(M)$, and let $U \subset \tilde{C}$ be the Zariski-open subset consisting of points not associated to traces of reducible representations. Our goal is to produce an explicit cohomology element $\check{H}^1_{et}(U, PSL_2)$.

\medskip

The proof that $A_{k(C)}$ extends over points corresponding to irreducible representations, which is part of the proof of Proposition 4.1 in \cite{Chinburg2017AzumayaAA}, directly uses the image of the tautological representation to produce an extension of $A_{k(C)}$ over that point. We would like to use the same construction over ideal points. This leads to the following definition:

\begin{definition}
Let $x \subset C$ be a codimension one point. We say that $A_{k(C)}$ \emph{tautologically extends over $x$} if there exists $g, h \in \pi_1(M)$ such that the $\mathcal{O}_x$-span of $\{1, P_C(g), P_C(h), P_C(gh)\}$ is an Azumaya algebra over $\mathcal{O}_x$. 
\end{definition}

Tautological extension is an explicit and more refined extension of $A_{k(C)}$ over codimension-one points of the canonical component than the tame symbol arguments of \cite{Chinburg2017AzumayaAA}. The proof of Proposition 4.1 in \cite{Chinburg2017AzumayaAA} shows that for $\chi_\rho \in C$ the trace of an irreducible representation, $A_{k(C)}$ tautologically extends over $\chi_\rho$. The main question this paper addresses is the following:

\begin{question}\label{que:main1}
When does $A_{k(C)}$ tautologically extend over ideal points of the canonical component?
\end{question}

\subsection{Toroidal Dehn fillings and JSJ decompositions}

\begin{definition}
Given a 3-manifold $M$ with torus boundary $T^2$, the \emph{Dehn filling with slope $\beta \in \pi_1(\partial M)$} is the 3-manifold obtained by gluing a solid torus $S^1 \times D^2$ such that the gluing map sends $[\partial D^2] \in \pi_1(S^1 \times D^2)$ to $[\beta] \in \pi_1(\partial M)$. As an alternate notation, given a basis $\langle m, \ell \rangle$ of $\pi_1(T^2)$ and $p/q \in \mathbb{Q}$, the Dehn filling $M(p/q)$ is equal to $M(pm+q\ell)$. 
\end{definition}

\begin{definition}
Given a hyperbolic knot complement $N = M \setminus K$, an \emph{exceptional Dehn filling} $N(\beta)$ is a Dehn filling with slope $\beta$ such that $N(\beta)$ is non-hyperbolic.
\end{definition} 

It is a foundational theorem of Thurston \cite{Thurston1979TheGA} that any hyperbolic knot admits only finitely many exceptional Dehn fillings. Some exceptional Dehn fillings are \emph{toroidal}, i.e. contain an incompressible torus. For instance, \cite{Gabai} shows that 0-surgeries of genus one knots are toroidal, with the incompressible tori of $M(0)$ being capped-off genus one Seifert surfaces. The results in this paper deal largely with these toroidal Dehn fillings of hyperbolic knots, which can be decomposed in an understandable way. It turns out that tautological extension of Azumaya algebras over ideal points in character varieties is intimately connected with the geometry of these decomposed toroidal Dehn fillings, which we now discuss further.

\begin{definition}
A \emph{2-orbifold} is a quotient of $\mathbb{R}^2$ by a smooth properly discontinuous (but not necessarily free) group action $G$, where the action of $G$ has fixed points.
\end{definition}

Like 2-manifolds, 2-orbifolds have an Euler characteristic and obey a Gauss-Bonnet type result, and thus can be divided into spherical, Euclidean, and hyperbolic geometries. See \cite{Thurston1979TheGA} for more details.

\begin{definition}
A manifold $M$ is \emph{Seifert-fibered} over a 2-orbifold $S$ if there exists a fiber bundle $S^1 \hookrightarrow M \to S$. The generator of $\pi_1(S^1)$ in $\pi_1(M)$ is called the \emph{regular fiber}.
\end{definition}

See \cite{Jaco1980LecturesOT} for more on Seifert-fibered manifolds. 

\begin{definition}
A \emph{JSJ decomposition} is a minimal collection of disjointly embedded incompressible tori $\{T_i\} \subset M$ such that each connected component $M_i \subset \bigcup_{i=1}^nT_i$ is atoroidal (i.e does not contain an essential torus) and either hyperbolic or Seifert-fibered.
\end{definition}

In \cite{Jaco1979SeifertFS}, Jaco-Shalen-Johannson showed that every closed 3-manifold admits a JSJ decomposition. 

\medskip

By Lemma 3 in \cite{Hatcher1985IncompressibleSI}, we have the following fact: given a knot complement $N = M - K$ and a Dehn filling $N(\beta)$, any incompressible surface in $N(\beta)$ must come from an incompressible surface in $N$. This means that a JSJ decomposition in a toroidal Dehn filling $N(\beta)$ of a hyperbolic knot complement must come from a disjoint union of incompressible $n$-punctured tori in $N$ (the tori must be punctured, or else $M - K$ would not be a hyperbolic knot complement). Note that the tori can have different numbers of punctures. 

\begin{definition}
We will refer to such a system of tori as a system of \emph{punctured JSJ tori}.
\end{definition} 

Once-punctured JSJ tori, i.e. Seifert surfaces of genus one, have been studied in \cite{ValdezSnchez2019SeifertSF} and \cite{Tsutsumi2003UniversalBF}, mainly in the context of bounding the number of JSJ tori in toroidal Dehn fillings. In addition, twice-punctured tori were studied in \cite{Gordon2000DehnSO}, \cite{Patton1995IncompressiblePT}. We will study incompressible punctured JSJ tori, particularly genus one Seifert surfaces, in the context of Culler-Shalen theory and Azumaya algebras. Upcoming work \cite{newpaper} will address the case of twice-punctured tori in certain knot complements. 

\subsection{Outline of the proof}

Recall the main theorem.

\main*

This is analogous to the main theorem of \cite{Paoluzzi2010ConwaySA}, in which Bonahon-Siebenmann systems of Conway spheres in hyperbolic knot complements are shown to be detected by ideal points of character varieties. The proof of the theorem combines techniques of Paoluzzi-Porti from \cite{Paoluzzi2010ConwaySA} and Tillmann from \cite{Tillmann2003VarietiesAT}. In analogy with Lemma 10 of \cite{Paoluzzi2010ConwaySA}, the proof also relies on \emph{$SL_2(\mathbb{C})$-compatibility} of JSJ decompositions, which means that holonomy representations of JSJ complementary components can be lifted to $SL_2(\mathbb{C})$ in such a way that the traces match on the torus boundary components. This property is proven in our specific case in Section \ref{sec:lemmas}; the proof relies on a deformation theory argument from \cite{MenalFerrer2010TwistedCF} and an analysis of the $\mathbb{Z}/2$-homology structure of the JSJ complementary regions. 

\medskip

By the way tautological extension is defined, it is directly related to the limit of the trace of tautological representation at the ideal point, which Tillmann defines as the \emph{limiting character}. One core idea of this paper is that the limiting character often reflects the geometry of complementary regions of capped-off surfaces in Dehn fillings, which manifests in the results of \cite{Paoluzzi2010ConwaySA} and Theorem \ref{thm:main}. In particular, in the situation of Theorem \ref{thm:main}, the fact that $A_{k(C)}$ tautologically extends over an ideal point can be attributed to the limiting character at that ideal point being equal to the trace of a holonomy representation of a JSJ complementary region of $M(0)$.

\medskip

The sketch of the proof is as follows. 
\begin{enumerate}
	\item Notice that the JSJ complementary regions of $M(0)$ are arranged in a circle, and each region thus has two boundary tori. (Lemma \ref{lma:verticesedges})
	\item Do a complex dimension count to show that each JSJ complementary region of $M(0)$ is either hyperbolic or Seifert-fibered over an annulus with a cone point of order $p \geq 2$, also called a \emph{cable space}. (Corollary \ref{cor:cablespaces})
	\item Show that there exist lifts of the holonomy representations of each JSJ complementary component so that the signs of the traces match on the boundary tori. (Theorem \ref{thm:compatible})
	\item Use the properties of JSJ decompositions (Theorem \ref{thm:JSJ}) and the gluing data to show that the holonomy representations restricted to the boundary tori have the same trace, but are nonconjugate. This is only possible because these representations are reducible on the boundary tori.
	\item Argue as in \cite{Paoluzzi2010ConwaySA} to show that lifts of the holonomy representations to $SL_2(\mathbb{C})$ derived in the third step must correspond to ideal points in the character variety. Intuitively, the argument uses the fact that on the boundary tori of the JSJ components, the holonomy characters are equal but the holonomy representations are nonconjugate. Since the traces can be glued, it corresponds to the limit of a sequence of points on the character variety, but since the traces are nonconjugate, this limit cannot correspond to the trace of a representation of $\pi_1(M)$, and hence must correspond to an ideal point.
	\item Use a result of Tillmann (Lemma \ref{lma:tillmann}) to conclude that the surface is detected by an ideal point on a character variety component containing irreducible characters, hence a norm curve.
	\item Since the holonomy representations on the complementary components are irreducible, conclude that $A_{k(C)}$ extends over this ideal point. (Proposition \ref{prop:limitingrep})
\end{enumerate}

\section{Prerequisite lemmas}\label{sec:lemmas}


\subsection{Tautological extension}

Recall the definition of tautological Azumaya extension:

\begin{definition}
Let $C \subset X(M)$ be a norm curve, and let $x \subset \tilde{C}$ be a codimension one point. We say that $A_{k(C)}$ \emph{tautologically extends over $x$} if there exists $g, h \in \pi_1(M)$ such that the $\mathcal{O}_x$-span of $\{1, P_C(g), P_C(h), P_C(gh)\}$ is an Azumaya algebra over $\mathcal{O}_x$. 
\end{definition}

The proof of Proposition 4.1 in \cite{Chinburg2017AzumayaAA} shows that $A_{k(C)}$ tautologically extends over any $\chi = \text{tr}(\rho) \in C$ such that $\rho$ is irreducible. If $\rho$ is reducible, we have the following.

\begin{prop}
Let $x_w \in C$ be a point corresponding to a reducible representation $\rho_w$. Then $A_{k(C)}$ does not tautologically extend over $x_w$. 
\end{prop}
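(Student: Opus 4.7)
The plan is to argue by contradiction, leveraging the fact that a four-dimensional central simple algebra over $\mathbb{C}$ is necessarily $M_2(\mathbb{C})$, which is simple, while the image of the tautological span under evaluation at a reducible character is forced into a three-dimensional subalgebra of $M_2(\mathbb{C})$.

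\smallskip

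First I would set up the notation. Suppose for contradiction that $A_{k(C)}$ tautologically extends over $x_w$, so there exist $g, h \in \pi_1(M)$ with $M := \mathcal{O}_{x_w}\cdot\{1, P_C(g), P_C(h), P_C(gh)\}$ Azumaya over the DVR $\mathcal{O}_{x_w}$. By definition of Azumaya, the residue algebra $M/\mathfrak{m}_{x_w}M$ is a central simple algebra over the residue field $k(x_w) = \mathbb{C}$, and since it has $\mathcal{O}_{x_w}$-rank $4$ it has $\mathbb{C}$-dimension $4$; as $\mathbb{C}$ is algebraically closed, this forces $M/\mathfrak{m}_{x_w}M \cong M_2(\mathbb{C})$.

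\smallskip

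Next I would construct the evaluation map. Choose a point $\tilde{x}$ of $D$ over $x_w$; after a change of basis (exactly as in the proof of Proposition 4.1 of \cite{Chinburg2017AzumayaAA}), the entries of $P_C(\gamma)$ for $\gamma \in \pi_1(M)$ lie in $\mathcal{O}_{\tilde{x}}$, and evaluation defines a $\mathbb{C}$-algebra homomorphism $\mathrm{ev}: M \to M_2(\mathbb{C})$ sending $P_C(\gamma) \mapsto \rho_w(\gamma)$, where $\rho_w$ is a representation with character $\chi_w$. Because $\chi_w$ is the character of a reducible representation, the trace identity $\chi_w([\alpha,\beta]) = 2$ for all $\alpha,\beta$ forces $\rho_w$ itself to be reducible. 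The map $\mathrm{ev}$ kills $\mathfrak{m}_{x_w}M$ and so factors through a $\mathbb{C}$-algebra map $\overline{\mathrm{ev}}: M/\mathfrak{m}_{x_w}M \to M_2(\mathbb{C})$.

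\smallskip

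Now comes the key dimension count. Since $\rho_w$ is reducible, it can be conjugated into the upper-triangular Borel subalgebra $B \subset M_2(\mathbb{C})$, which has $\dim_{\mathbb{C}} B = 3$. Therefore the image of $\overline{\mathrm{ev}}$ is contained in (a conjugate of) $B$, so $\dim_{\mathbb{C}} \mathrm{im}(\overline{\mathrm{ev}}) \leq 3$. On the other hand, $M/\mathfrak{m}_{x_w}M \cong M_2(\mathbb{C})$ is a simple ring, so $\ker(\overline{\mathrm{ev}})$ is either $0$ or everything; since $\overline{\mathrm{ev}}(1) = I \neq 0$, the map $\overline{\mathrm{ev}}$ must be injective. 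This contradicts $4 = \dim_{\mathbb{C}}(M/\mathfrak{m}_{x_w}M) \leq \dim_{\mathbb{C}}\mathrm{im}(\overline{\mathrm{ev}}) \leq 3$, completing the argument.

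\smallskip

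The main obstacle I anticipate is making the evaluation map rigorous, namely verifying that after choosing a suitable basis for the tautological representation on $D$, the entries of $P_C(g), P_C(h), P_C(gh)$ lie in the local ring at a point above $x_w$ so that $\mathrm{ev}$ is well-defined and algebra-preserving. This should reduce to (or quote) the same change-of-basis argument used for irreducible characters in the proof of Proposition 4.1 of \cite{Chinburg2017AzumayaAA}; no new machinery is needed. Once evaluation is established, the rest is the purely formal simplicity-versus-dimension contradiction above.
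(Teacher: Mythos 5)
Your proposal is correct and takes essentially the same approach as the paper: reducibility of $\rho_w$ forces the span of $\{1, \rho_w(g), \rho_w(h), \rho_w(gh)\}$ to have dimension at most $3$, which is incompatible with the reduction of a rank-$4$ Azumaya algebra being a central simple algebra. If anything, your version is slightly more careful than the paper's, which directly asserts ``$A_{x_w} \otimes k(x_w)$ must be the $k(x_w)$-span of $\{1, \rho_w(g), \rho_w(h), \rho_w(gh)\}$'' and cites Lemma 1.2.4 of Maclachlan--Reid for the dependence, whereas you correctly distinguish the $4$-dimensional abstract residue $M/\mathfrak{m}_{x_w}M$ from its image in $M_2(\mathbb{C})$ under evaluation, and then use simplicity of $M_2(\mathbb{C})$ to force injectivity and hence a dimension contradiction; either phrasing conveys the same idea, but yours closes the small gap between ``the residue is free of rank four'' and ``the residue is the evaluated span.''
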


\begin{proof}
Suppose $A_{k(C)}$ has a tautological extension over $x_w$. Let $\mathcal{O}_{x_w}$ be the valuation ring associated to $x_w$, with residue field $k(x_w)$, and let $A_{x_w}$ be the $\mathcal{O}_{x_w}$-span of the basis $\{1, P_C(g), P_C(h), P_C(gh)\}$ for some $g, h$ which is an Azumaya algebra. Recall that
\begin{equation}
	A_{k(C)} = \left\{\sum_{i=1}^n\alpha_iP_C(\gamma_i) \mid \alpha_i \in k(C), \gamma_i \in \pi_1(M)\right\}
\end{equation}
Then $A_{x_w} \otimes k(x_w)$ must be the $k(x_w)$-span of the set $\{1, \rho_w(g), \rho_w(h), \rho_w(gh)\}$. Since $\rho_w$ is reducible, by Lemma 1.2.4 in \cite{Maclachlan2002TheAO}, these vectors are dependent, and hence cannot generate a 4-dimensional algebra over $k_w$. Thus this span is not a central simple algebra over $k_w$. This leads to a contradiction, and thus $A_{k(C)}$ cannot tautologically extend over $x_w$. 
\end{proof}

In light of this, the ideal points $x \in \widetilde{C}$ are the only points where tautological extension of $A_{k(C)}$ is to be determined. 

\subsection{The limiting character}

Let $D$ be a curve in the projectivized representation variety lying over $\overline{C}$. Recall the tautological representation $P_C: \pi_1(M) \to SL_2(F)$, with $F = k(D)$. Now suppose $S$ is an essential surface detected by an ideal point $x \in \widetilde{C}$, with associated valuation ring $\mathcal{O}_x$. Let $\tilde{x}$ be an ideal point in $D$ lying over $x$. Note that since $x$ is an ideal point, there will exist some $h \in \pi_1(M)$ such that $P_c(h)$ cannot be conjugated into $SL_2(\mathcal{O}_{\tilde{x}})$, i.e. some trace functions will have poles at the ideal point. Let $M_1, \dots, M_n$ be the components of $\overline{M \setminus S}$, i.e. the complementary regions. Both $\pi_1(S)$ and $\pi_1(M_i)$ can be viewed as subgroups of $\pi_1(M)$, since $S$ is essential. From Bass-Serre theory (as in \cite{Culler1983VarietiesOG}), for any $g \in \pi_1(M_i) \subset \pi_1(M)$, $P_c(g)$ is $GL_2(F)$-conjugate to an element of $SL_2(\mathcal{O}_{\tilde{x}})$, where $\mathcal{O}_{\tilde{x}} \subset  k(D)$ is the valuation ring associated to the ideal point $\tilde{x} \in D$.

\begin{definition}
The \emph{limiting character on $M_i$}, denoted $\chi_{\infty, i} = \text{tr}(\rho_{\infty, i})$, is the character of a representation
\begin{equation}
	\rho_{\infty, i}: \pi_1(M_i) \to SL_2(\mathbb{C})
\end{equation}
obtained as follows. Given $g \in \pi_1(M_i)$, take the matrix $P_c(g) \in SL_2(F)$, conjugate into $SL_2(\mathcal{O}_{\tilde{x}})$, and compose with the quotient map $\mathcal{O}_{\tilde{x}} \to k_v = \mathcal{O}_{\tilde{x}}/\mathfrak{m}_{\mathcal{O}_{\tilde{x}}}$ to obtain an element of $SL_2(k_v) \subset SL_2(\mathbb{C})$. Here $\mathfrak{m}_{\mathcal{O}_{\tilde{x}}}$ is the maximal ideal of the valuation ring. Since conjugation preserves trace, the trace of this element is well-defined, and we define $\chi_{\infty, i}(g)$ to be the trace of this matrix.
\end{definition}

\begin{remark}
In order for this definition to go through, we need that the entire subgroup $P_c(\pi_1(M_i))$ is $GL_2(F)$-conjugate to a subgroup of $SL_2(\mathcal{O}_{\tilde{x}})$. Since $\pi_1(M_i)$ is contained in a vertex stabilizer of the Bass-Serre tree, this follows from Theorem 2.1.2 in \cite{Culler1983VarietiesOG}.
\end{remark}

\begin{remark}
Let $\{\chi_j\} \subset C$ be a sequence of characters that approach $x$. One can see that
\begin{equation}
	\chi_{\infty, i} = \lim_{\chi_j \rightarrow x}\chi_j|_{\pi_1(M_i)}
\end{equation}
In other words, the limiting character is exactly what it sounds like.
\end{remark}

The limiting character takes on finite values on each connected component of $S$, since $\pi_1(S_i) \subset \pi_1(M_j)$ for each connected component $S_i$, and for some $M_j$. By an argument in Section 3.8 of the third chapter of \cite{Daverman2002HandbookOG}, we have the following:

\begin{prop}[\cite{Daverman2002HandbookOG}]
The limiting character is equal to the trace of a reducible representation on a detected essential surface at the ideal point.
\end{prop}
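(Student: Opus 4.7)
The plan is to realize $\pi_1$ of each component of $S$ as an edge stabilizer in the Bass-Serre tree of $v_{\tilde{x}}$ and then exploit the local algebraic structure of edge stabilizers in the $SL_2$-tree.

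First, recall from the Culler-Shalen construction outlined in Section \ref{sec:background} that the detected surface $S$ arises as the preimage (under an appropriate equivariant map from the universal cover of $M$) of midpoints of edges of the Bass-Serre tree. Consequently, for any connected component $S_j$ of $S$, the subgroup $\pi_1(S_j) \subset \pi_1(M)$ stabilizes an edge $e$ of the tree under the action induced by $P_C$, and in particular fixes both adjacent vertices $v_1, v_2$. These vertices are represented by lattices $L, L' \subset F^2$ with $\pi L \subsetneq L' \subsetneq L$, and by Theorem 2.1.2 of \cite{Culler1983VarietiesOG} the vertex stabilizers are $GL_2(F)$-conjugate into $SL_2(\mathcal{O}_{\tilde{x}})$.

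Choosing a basis of $L$ adapted to the filtration $\pi L \subsetneq L' \subsetneq L$ and conjugating so that $v_1$ corresponds to the standard vertex $SL_2(\mathcal{O}_{\tilde{x}})$, the pointwise stabilizer of $e$ becomes the Iwahori subgroup
\begin{equation}
	\mathcal{I} = \left\{\begin{pmatrix} a & b \\ c & d\end{pmatrix} \in SL_2(\mathcal{O}_{\tilde{x}}) \;\middle|\; b \in \mathfrak{m}_{\mathcal{O}_{\tilde{x}}}\right\}.
\end{equation}
Hence $P_C(\pi_1(S_j))$ lies in $\mathcal{I}$ after the appropriate conjugation, and reducing modulo $\mathfrak{m}_{\mathcal{O}_{\tilde{x}}}$ sends $\mathcal{I}$ onto the upper-triangular Borel subgroup of $SL_2(k_v) \subset SL_2(\mathbb{C})$. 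By the definition of the limiting character, $\rho_{\infty, i}|_{\pi_1(S_j)}$ is precisely this reduction, and so it takes values in a Borel subgroup, fixing a common line in $\mathbb{C}^2$; it is therefore reducible. Its trace is the restriction of $\chi_{\infty, i}$ to $\pi_1(S_j)$, which is exactly the assertion of the proposition.

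The main technical care needed is to verify that the whole image $P_C(\pi_1(S_j))$ can be conjugated into $\mathcal{I}$ simultaneously, and that the conjugation used here is compatible with the one used to define $\rho_{\infty, i}$ on all of $\pi_1(M_i)$. Both reduce to the observation that $\mathcal{I}$ is the pointwise stabilizer of $e$ inside a common vertex stabilizer $SL_2(\mathcal{O}_{\tilde{x}})$, combined with the fact that $\chi_{\infty, i}$ is invariant under $GL_2(F)$-conjugation. I do not expect a serious obstacle beyond carefully setting up the basis and confirming these compatibilities; as the citation to \cite{Daverman2002HandbookOG} suggests, the result is a clean consequence of the standard Bass-Serre-theoretic dictionary between edge stabilizers in the $SL_2$-tree and reducible residual representations.
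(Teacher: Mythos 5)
Your argument is correct and is the standard Bass--Serre-theoretic one that the paper's citation to \cite{Daverman2002HandbookOG} (Shalen's chapter) refers to: $\pi_1(S_j)$ fixes an edge, edge stabilizers are $GL_2(F)$-conjugate to an Iwahori subgroup, and the residual image of an Iwahori subgroup is a Borel subgroup, hence reducible. One cosmetic slip: with your convention $b \in \mathfrak{m}_{\mathcal{O}_{\tilde{x}}}$ the reduction lands in the \emph{lower}-triangular Borel (the condition $c \in \mathfrak{m}_{\mathcal{O}_{\tilde{x}}}$ gives the upper-triangular one), but this does not affect the conclusion.
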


We have already seen that tautological extensions cannot come from reducible representations associated to any point, and so the basis for a tautological extension of $A_{k(C)}$ over an ideal point cannot come from the fundamental group of a detected essential surface. Instead, we will look toward its complement.

\begin{prop}\label{prop:limitingrep}
Let $S$ be an essential surface detected by the ideal point $x \in \widetilde{C}$. Then if there exists a component $M' \subset \overline{M \setminus S}$ such that the limiting character $\chi_{\infty}: \pi_1(M') \to \mathbb{C}$ is the trace of an irreducible representation, then $A_{k(C)}$ tautologically extends over $x$.
\end{prop}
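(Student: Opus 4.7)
The plan is to leverage the irreducibility of $\rho_{\infty}$ on $M'$ to turn $A_{k(C)}$ into a matrix algebra locally at $x$. First I would apply Lemma 1.2.4 of \cite{Maclachlan2002TheAO} in the converse direction from the previous proposition: since $\rho_{\infty}$ is irreducible, there exist $g, h \in \pi_1(M') \subset \pi_1(M)$ such that $\{1, \rho_{\infty}(g), \rho_{\infty}(h), \rho_{\infty}(gh)\}$ is a $\mathbb{C}$-basis of $M_2(\mathbb{C})$. These furnish the candidate generators for the tautological extension.

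Next I would lift this basis back to the tautological representation. By the remark preceding the statement, $\pi_1(M')$ stabilizes a vertex of the Bass-Serre tree at $v_{\tilde{x}}$, so there is a matrix $B \in GL_2(F)$ with $B\,P_C(\gamma)\,B^{-1} \in SL_2(\mathcal{O}_{\tilde{x}})$ for every $\gamma \in \pi_1(M')$, and reduction modulo $\mathfrak{m}_{\mathcal{O}_{\tilde{x}}}$ recovers $\rho_{\infty}$ up to global conjugation. Form
\begin{equation*}
\Lambda := \mathcal{O}_{\tilde{x}} \cdot \{1,\ B P_C(g) B^{-1},\ B P_C(h) B^{-1},\ B P_C(gh) B^{-1}\} \subset M_2(\mathcal{O}_{\tilde{x}}).
\end{equation*}
By the first step, the image of $\Lambda$ in $M_2(k_v)$ is the full matrix algebra, so Nakayama's lemma applied to the finitely generated $\mathcal{O}_{\tilde{x}}$-module $M_2(\mathcal{O}_{\tilde{x}})/\Lambda$ gives $\Lambda = M_2(\mathcal{O}_{\tilde{x}})$. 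In particular, $\Lambda$ is closed under multiplication and is the standard Azumaya algebra over $\mathcal{O}_{\tilde{x}}$. Conjugating back by $B^{-1}$ shows that the $\mathcal{O}_{\tilde{x}}$-span of $\{1, P_C(g), P_C(h), P_C(gh)\}$ is itself an Azumaya algebra over $\mathcal{O}_{\tilde{x}}$.

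Finally I would descend from $\mathcal{O}_{\tilde{x}}$ to $\mathcal{O}_x$. The generators $1, P_C(g), P_C(h), P_C(gh)$ all lie in the four-dimensional $k(C)$-algebra $A_{k(C)}$; the $k(D)$-linear independence established above makes them a $k(C)$-basis of $A_{k(C)}$. Hence the $\mathcal{O}_x$-span $\Lambda_0$ is a rank-four $\mathcal{O}_x$-lattice in $A_{k(C)}$, and $\Lambda_0 \otimes_{\mathcal{O}_x} \mathcal{O}_{\tilde{x}}$ recovers the Azumaya algebra from the previous step under the inclusion $A_{k(C)} \otimes_{k(C)} k(D) \hookrightarrow M_2(k(D))$. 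Since $\mathcal{O}_x \hookrightarrow \mathcal{O}_{\tilde{x}}$ is a faithfully flat extension of DVRs and the Azumaya condition descends along faithfully flat maps, $\Lambda_0$ is Azumaya over $\mathcal{O}_x$, which is precisely the statement that $A_{k(C)}$ tautologically extends over $x$. I expect this last descent to be the main obstacle: one must verify that the structure constants controlling the products in $\Lambda$, which a priori live in $\mathcal{O}_{\tilde{x}}$, actually lie in $\mathcal{O}_x$, using the identity $\mathcal{O}_{\tilde{x}} \cap k(C) = \mathcal{O}_x$ together with the fact that $A_{k(C)}$ is $k(C)$-defined from the outset.
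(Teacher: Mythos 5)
Your proof is correct and takes a genuinely different route from the paper. The paper's proof stays over $\mathcal{O}_x$ the whole time: it invokes the proof of Lemma~2.5 in \cite{Chinburg2017AzumayaAA} to identify the residue field $k(x)$ with the trace field $k_\infty$ of $\rho_\infty$, observes that $A_{\rho_\infty}$ is a quaternion algebra over $k_\infty$ since $\rho_\infty$ is irreducible, and then checks directly that the reduction of the $\mathcal{O}_x$-span $A_x$ modulo $\mathfrak{m}_{\mathcal{O}_x}$ equals $A_{\rho_\infty}$, concluding via Theorem~3.1.6 of \cite{Chinburg2017AzumayaAA}. You instead climb up to $\mathcal{O}_{\tilde{x}}$, where the Bass--Serre vertex stabilizer lets you conjugate $P_C(\pi_1(M'))$ into $SL_2(\mathcal{O}_{\tilde{x}})$, show by Nakayama that the conjugated span is all of $M_2(\mathcal{O}_{\tilde{x}})$, and then descend to $\mathcal{O}_x$ by faithfully flat descent. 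Both arguments are sound. Yours avoids the CRS black box about residue fields entirely and makes visible the structural fact that $A_{k(C)}$ splits over $k(D)$, at the cost of an extra descent step. The obstacle you flagged at the end --- verifying that the $\mathcal{O}_x$-span is already an $\mathcal{O}_x$-subalgebra rather than merely an $\mathcal{O}_x$-lattice --- is real but is resolved exactly as you indicate: since $\{1, P_C(g), P_C(h), P_C(gh)\}$ is a $k(C)$-basis of $A_{k(C)}$ and $\mathcal{O}_{\tilde{x}} \cap k(C) = \mathcal{O}_x$ (as $\tilde{x}$ lies over $x$), the identity $\Lambda_0 = \bigl(\Lambda_0 \otimes_{\mathcal{O}_x} \mathcal{O}_{\tilde{x}}\bigr) \cap A_{k(C)}$ holds, so products of elements of $\Lambda_0$, which lie in both $A_{k(C)}$ and the $\mathcal{O}_{\tilde{x}}$-algebra $\Lambda_0 \otimes_{\mathcal{O}_x} \mathcal{O}_{\tilde{x}}$, land back in $\Lambda_0$. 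One small point worth making explicit: you assert that $\{1, \rho_\infty(g), \rho_\infty(h), \rho_\infty(gh)\}$ is a $\mathbb{C}$-basis of $M_2(\mathbb{C})$, but for the Nakayama step you need it to span $M_2(k_v)$ over $k_v$, where $k_v$ is the residue field of $\mathcal{O}_{\tilde{x}}$; since the four matrices lie in $M_2(k_v)$ and are $\mathbb{C}$-independent, they are $k_v$-independent and hence a $k_v$-basis, so the step does go through.
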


\begin{proof}
Suppose that $\chi_\infty$ is the trace of an irreducible representation $\rho_\infty: \pi_1(M') \to SL_2(\mathbb{C})$. Let $k_\infty$ be the field generated by traces of elements in $\pi_1(M')$ under $\rho_\infty$. By the proof of Lemma 2.5 in \cite{Chinburg2017AzumayaAA}, $k_\infty$ is also equal to the residue field of $\widetilde{C}$ at $x$. Let 
\begin{equation}
	A_{\rho_\infty} = \left\{\sum_{i=1}^n\alpha_i\rho_\infty(\gamma_i) \mid \alpha_i \in k_\infty, \gamma_i \in \pi_1(M')\right\}
\end{equation}
Note that $A_{\rho_\infty}$ is a quaternion algebra over $k_\infty$ because $\rho_\infty$ is irreducible. Since $\rho_\infty$ is irreducible, it also follows that $\pi_1(M')$ is nonabelian. Choose noncommuting $g, h \in \pi_1(M')$. By Section 3.6 in \cite{Maclachlan2002TheAO}, $\{1, \rho_\infty(g), \rho_\infty(h), \rho_\infty(gh)\}$ is a $k_\infty$-basis for $A_{\rho_\infty}$. Then $\{1, P_C(g), P_C(h), P_C(gh)\}$ forms a basis for $A_{k(C)}$. Let $A_x$ be the $\mathcal{O}_x$-span of this basis. Then the reduction of $A_x$ modulo the maximal ideal $\mathfrak{m}_{\mathcal{O}_x}$ is equal to $A_{\rho_\infty}$, which is a quaternion algebra over $k_\infty$. Thus, $A_x \otimes k_\infty = A_x \otimes k(x) = A_{\rho_\infty}$ is a central simple algebra. It is immediate from our constructions that $A_x \otimes k(C) \cong A_{k(C)}$, so by Theorem 3.1.6 in \cite{Chinburg2017AzumayaAA}, $A_{k(C)}$ tautologically extends over $x$. 
\end{proof}

By the above proposition, the main question of this paper can be interpreted as:

\begin{question}\label{que:main2}
When is the limiting character $\chi_\infty$ irreducible at an ideal point of a norm curve?
\end{question}

\subsection{Cable spaces}

Theorem \ref{thm:main} is about JSJ decompositions of toroidal surgeries. Here we set up a discussion of $SL_2(\mathbb{C})$-compatiblity of the JSJ decompositions of these toridal Dehn fillings, which will be needed for the proof of Theorem \ref{thm:main}. 

\begin{definition}
Let $A^2(q)$ be a 2-orbifold with base space an annulus and a cone point of order $q$ for $q \geq 2$. A \emph{cable space} is a 3-manifold $N$ Seifert-fibered over $A^2(q)$. 
\end{definition}

Cable spaces will be the most common JSJ-components of toroidal Dehn fillings of knot complements with essential once-punctured tori. In order to prove Theorem \ref{thm:main}, we must analyze the character varieties of these spaces.

\begin{lemma}
Let $N$ be a cable space. Then $X(N)$ is two-dimensional.
\end{lemma}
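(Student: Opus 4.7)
The plan is to compute $\dim X(N)$ directly from a presentation of $\pi_1(N)$ coming from the Seifert fibration. Standard Seifert fibered space theory (as in \cite{Jaco1980LecturesOT}) gives, since the base orbifold $A^2(q)$ has orbifold fundamental group $\langle a, b \mid b^q = 1\rangle$ with $a$ a boundary loop and $b$ a loop around the cone point, the presentation
\begin{equation}
\pi_1(N) \cong \langle a, b, f \mid [a, f] = 1,\ [b, f] = 1,\ b^q f^s = 1 \rangle
\end{equation}
where $f$ is the regular fiber and $s$ is the Seifert invariant with $\gcd(q,s)=1$. In particular $f$ is central, and the two torus boundary subgroups are $\langle a, f \rangle$ and $\langle (ab)^{-1}, f\rangle$.

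I would then split $X(N)$ into its irreducible and reducible loci and count dimensions separately. On the irreducible side, centrality of $f$ together with Schur's lemma forces $\rho(f) = \pm I$. The relation $b^q f^s = 1$ then implies $\rho(b)^{2q} = I$, so $\rho(b)$ lies in the finite union of $SL_2(\mathbb{C})$-conjugacy classes of elements of order dividing $2q$; each such non-central class has complex dimension two. Meanwhile $\rho(a)$ is unconstrained, giving a three-dimensional family in $SL_2(\mathbb{C})$. Hence the irreducible part of the representation variety has dimension $3+2 = 5$, and quotienting by the three-dimensional $PGL_2(\mathbb{C})$-conjugation action (which acts with finite stabilizer on irreducibles) yields a two-dimensional irreducible character locus.

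For the reducible locus, characters factor through the abelianization. Reading off from the presentation, the relation $qb + sf = 0$ with $\gcd(q,s)=1$ cuts the $\mathbb{Z}^2$ spanned by $b$ and $f$ down to a single copy of $\mathbb{Z}$ by Smith normal form, so $H_1(N;\mathbb{Z}) \cong \mathbb{Z}^2$. Reducible characters are therefore parameterized by $\mathrm{Hom}(H_1(N), \mathbb{C}^\ast) \cong (\mathbb{C}^\ast)^2$ modulo the involution $\chi \mapsto \chi^{-1}$, a two-dimensional quotient. Combining both loci gives $\dim X(N) = 2$.

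The only point requiring genuine care is ensuring that the dimension bound is tight rather than merely an upper bound: I would exhibit an explicit two-parameter family of irreducible characters by fixing $\rho(b)$ to be a diagonalizable element of order $q$ with distinct eigenvalues (possible since $q \geq 2$) and varying $\rho(a)$ among non-commuting elements, which gives irreducible representations parameterized up to conjugation by a two-complex-dimensional open set. The degenerate loci (where $\rho(b)$ is central, or where $\rho(a)$ commutes with $\rho(b)$) are proper subvarieties and hence do not affect the global dimension.
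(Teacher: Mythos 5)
Your argument is correct and reaches the same conclusion, but it takes a more hands-on route than the paper. Both proofs hinge on the same key fact --- that the regular fiber $f$ is central, so any irreducible representation sends $\rho(f) = \pm I$ --- but the paper obtains this by citing Lemma 3.1 of \cite{Kitano1994ReidemeisterTO}, then asserts that $\dim X(N) = \dim X(A^2(q))$ and computes the latter by slicing the known $X(F_2) \cong \mathbb{C}^3$ along the locus $\mathrm{tr}(\rho(ab)) = \zeta_q + \zeta_q^{-1}$. You instead derive centrality plus Schur's lemma from scratch, do an explicit dimension count on the representation variety (free choice of $\rho(a)$ is $3$-dimensional, $\rho(b)$ ranges over finitely many $2$-dimensional semisimple conjugacy classes, quotient by the $3$-dimensional $PGL_2$-action), and then handle the reducible locus separately by computing $H_1(N;\mathbb{Z}) \cong \mathbb{Z}^2$ via Smith normal form. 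What your version buys is completeness: the paper's one-line reduction to $X(A^2(q))$ quietly applies only to irreducible characters, since reducible representations need not kill the fiber, whereas you address the reducible component explicitly and show it is also two-dimensional. What the paper's version buys is brevity, by outsourcing the fiber fact to a reference and the dimension to the standard $X(F_2) \cong \mathbb{C}^3$. One small slip in your last paragraph: for $q = 2$ there is no noncentral element of $SL_2(\mathbb{C})$ of order exactly $q = 2$ (the only order-$2$ element is $-I$); you should take $\rho(f) = -I$ and $\rho(b)$ of order $4$ (eigenvalues $\pm i$), or more generally say $\rho(b)$ is a noncentral element of order dividing $2q$ satisfying $\rho(b)^q = \rho(f)^{-s}$, which exists for all $q \geq 2$. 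This does not affect the dimension count, only the phrasing of the explicit family.
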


\begin{proof}
By Lemma 3.1 in \cite{Kitano1994ReidemeisterTO}, every irreducible representation of $\pi_1(N)$ into $SL_2(\mathbb{C})$ sends the regular fiber to $\pm I$. It follows that the dimension of $X(N)$ is equal to the dimension of $X(A^2(q))$. We have the fundamental group presentation
\begin{equation}
	\pi_1(A^2(q)) = \langle a, b \mid (ab)^q = 1 \rangle
\end{equation}
Let $F_2 = \langle a, b \rangle$ be the free group on two generators. Knowing that $X(F_2) \cong \mathbb{C}^3$, it follows that $X(A^2(q)) = \{(x, y, z) \in \mathbb{C}^3 \mid z = \zeta_q + \zeta_q^{-1}\} \cong \mathbb{C}^2$, where $\zeta_q$ is any $q$th root of unity. This is two-dimensional, so we are done.
\end{proof}

\begin{lemma}\label{lma:twodim}
The only hyperbolic 2-orbifolds with two geodesic boundary components whose character variety has dimension two are $A^2(q)$, where $A^2$ is the annulus and $q \geq 2$ is the order of the cone point. 
\end{lemma}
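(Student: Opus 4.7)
The plan is to parametrize every hyperbolic 2-orbifold $S$ with two geodesic boundary components by its underlying topological data, compute $\dim X(S)$ in terms of that data, and show that $\dim X(S) = 2$ forces $S \cong A^2(q)$ for some $q \geq 2$. First I would fix notation: write the underlying compact surface $\Sigma$ of $S$ as either orientable of genus $g$ with two boundary components, or non-orientable of genus $k \geq 1$ with two boundary components, and let $S$ carry $n$ cone points of orders $q_1, \ldots, q_n \geq 2$. Since $\Sigma$ has nonempty boundary, $\pi_1(\Sigma)$ is free of rank $r = 2g + 1$ in the orientable case or $r = k + 1$ in the non-orientable case, and puncturing $\Sigma$ at each cone point then imposing the orbifold relation $c_i^{q_i} = 1$ on the loop around each cone point produces the free product decomposition
\begin{equation}
\pi_1^{orb}(S) \;\cong\; F_r * \mathbb{Z}/q_1 * \cdots * \mathbb{Z}/q_n.
\end{equation}

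Next I would compute $\dim X(S)$ from this decomposition. Using the identity $R(G_1 * G_2) = R(G_1) \times R(G_2)$ together with the fact that $R(\mathbb{Z}/q) \subset SL_2(\mathbb{C})$ is a finite union of $2$-dimensional semisimple conjugacy classes (elements conjugate to $\mathrm{diag}(\zeta, \zeta^{-1})$ with $\zeta^q = 1$ and $\zeta \neq \pm 1$), the top-dimensional stratum of $R(\pi_1^{orb}(S))$ has dimension $3r + 2n$. Because any hyperbolic $S$ of this form gives a nontrivial free product $\pi_1^{orb}(S)$, the group is nonabelian and therefore admits irreducible $SL_2(\mathbb{C})$-representations; on the locus of such representations the $PGL_2$-conjugation action is generically free, so quotienting yields $\dim X(S) = 3r + 2n - 3$.

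Finally I would solve $3r + 2n - 3 = 2$. In the orientable case, $r = 2g + 1$ turns this into $6g + 2n = 2$, whose only nonnegative integer solution is $g = 0$, $n = 1$; hyperbolicity then reads $\chi^{orb}(S) = -1 + 1/q_1 < 0$, i.e., $q_1 \geq 2$, giving $S \cong A^2(q_1)$. In the non-orientable case, $r = k + 1$ turns the equation into $3k + 2n = 2$, which admits no solution with $k \geq 1$, ruling out non-orientable examples. The main subtlety lies in justifying the dimension count: one must verify that each $\mathbb{Z}/q_i$ factor contributes exactly $2$ (rather than $3$) to $\dim R$, and that the $PGL_2$-quotient really shaves off the full $3$ on the top stratum. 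Both points follow from standard structure theory of $SL_2(\mathbb{C})$-conjugacy classes and of irreducible representations of nontrivial free products, but they are the only places where care is required.
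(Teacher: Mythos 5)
Your proof is correct and follows essentially the same route as the paper's: both reduce the computation to the rank of a free group via the formula $\dim X(F_m) = 3m-3$, and both account for the cone points' contribution to the dimension. The only cosmetic difference is that you compute $\dim X(S) = 3r + 2n - 3$ exactly via the free-product decomposition $F_r * \mathbb{Z}/q_1 * \cdots * \mathbb{Z}/q_n$ and the conjugacy-class geometry of torsion in $SL_2(\mathbb{C})$, whereas the paper settles for the lower bound $\dim X(S) \geq \dim X(S_{g,c+2}) - c$ (obtained by viewing the cone-point relations as $c$ cutting equations), which turns out to be an equality anyway and yields the same numerology.
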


\begin{proof}
First, suppose the base space of a Seifert-fibered space is orientable, i.e. a twice-punctured orientable surface of genus $g$. Suppose we are in the situation of an orbifold whose base space is a twice-punctured surface $S_g$ with $c$ cone points. Call the orbifold $S_g'$. Let $S_{g,n}$ be the $n$-punctured surface; we have $\dim(X(S_g')) \geq \dim(S_{g,c+2}) - c$, since there are $c$ equations defining the relations induced by the cone points. Thus, it suffices to show that for all $S_{g,n}$ with $n \geq 2$ and not a twice or thrice-punctured sphere, $\dim(X(S_{g,n})) > n$. Note that $S_{g,n}$ is homotopy equivalent to a wedge of $n + 2g - 1$ circles, and so $\pi_1(S_{g,n}) \cong F_{n+2g-1}$, i.e. the free group on $n+2g-1$ generators. By \cite{Ashley2017Rank1C}, $\dim(X(F_{n+2g-1})) = 3n+6g-6$, and if $n \geq 2$ and $g > 0$ this is greater than $n$. If $g = 0$, $n > 3$, and so $3n+6g-6>n$ still. So we are done if the orbifold is orientable. 
	
\medskip
	
If the orbifold is non-orientable, the base orbifold is a twice-punctured connected sum of $g$ copies of $\mathbb{R}P^2$ with some cone points; let the base surface with $n$ punctures be denoted $S_{g,n}$ as above. By the same logic as above, it suffices to show that $\dim(X(S_{g,n})) > n$. Indeed, since $S_{g,n}$ is homotopy equivalent to a wedge of $n + g - 1$ circles, we have that $\pi_1(S_{g,n}) = F_{n+g-1}$, and from \cite{Ashley2017Rank1C} we have $\dim(X(F_{n+g-1})) = 3n+3g-6 > n$ if $n \geq 2$ and $g > 0$. So we are done in both the orientable and non-orientable cases now. 
\end{proof}

Combining with Lemma 3.1 in \cite{Kitano1994ReidemeisterTO} gives us the following:

\begin{corollary}\label{cor:cablespaces}
If an orientable 3-manifold with two torus boundary components is not Seifert-fibered over the annulus and has character variety of dimension two, it is either hyperbolic or a cable space.
\end{corollary}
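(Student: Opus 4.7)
The plan is to combine Thurston's geometrization theorem with Lemma 3.1 of \cite{Kitano1994ReidemeisterTO} and the two preceding lemmas. In the setting in which this corollary will be applied, $N$ arises as an atomic JSJ piece, so I may assume $N$ is atoroidal. Geometrization then furnishes the desired dichotomy: either $N$ is hyperbolic, in which case there is nothing more to show, or $N$ is Seifert-fibered over some 2-orbifold $S$ whose underlying surface has two boundary circles, coming from the two torus boundary components of $N$.

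Assume from now on that $N$ is Seifert-fibered over $S$. My first task would be to rule out non-hyperbolic base orbifolds. A compact 2-orbifold with two boundary components has $\chi(S) \leq 0$, with equality only for the plain annulus $A^2$: any cone point of order $q \geq 2$ contributes $-(1-1/q) < 0$ to the Euler characteristic, and spherical 2-orbifolds are closed. Since $N$ is not Seifert-fibered over $A^2$ by hypothesis, $S$ must be hyperbolic.

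Next I would transfer the dimension hypothesis from $N$ to $S$. By Lemma 3.1 of \cite{Kitano1994ReidemeisterTO}, every irreducible representation of $\pi_1(N)$ sends the regular fiber $h$ to $\pm I$, so factors through the central quotient $\pi_1(N)/\langle h^2\rangle$, which in turn surjects onto $\pi_1^{\mathrm{orb}}(S)$ with kernel of order two. The irreducible stratum of $X(N)$ is therefore a finite cover of the irreducible stratum of $X(S)$ and has the same dimension, so $\dim X(S) \leq \dim X(N) = 2$. On the other hand, every hyperbolic 2-orbifold with two boundary circles already satisfies $\dim X(S) \geq 2$: the orbifold $A^2(q)$ of Lemma 1 realizes this bound, and adding further cone points or genus strictly increases the dimension (as in the computation performed in the proof of Lemma \ref{lma:twodim}). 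Hence $\dim X(S) = 2$.

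Lemma \ref{lma:twodim} now forces $S = A^2(q)$ for some $q \geq 2$, making $N$ a cable space by definition. The main subtlety I anticipate is the dimension matching in the third step: one must certify that the irreducible stratum of $X(N)$ attains the full dimension $2$, rather than being dominated by an inflated reducible stratum. This is where the hyperbolicity of $S$ is used, since the non-elementary $\pi_1^{\mathrm{orb}}(S)$ supplies abundant irreducible representations which pull back to $\pi_1(N)$ through the central quotient, turning the obvious inequality $\dim X(N) \geq \dim X(S)$ into the dimensional equality needed to invoke Lemma \ref{lma:twodim}.
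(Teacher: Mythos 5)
Your proposal is correct and fills in the same argument the paper leaves implicit when it says to combine Lemma~\ref{lma:twodim} with Lemma~3.1 of Kitano: geometrize the atoroidal piece, use the Euler characteristic count to see that a base 2-orbifold with two boundary circles other than $A^2$ is hyperbolic, transfer the dimension hypothesis to the base via the central-fiber statement, and invoke Lemma~\ref{lma:twodim}. The subtlety you flag at the end is actually not needed: the pullback along the Seifert fibration already gives an embedding $X(S) \hookrightarrow X(N)$, hence $\dim X(S) \le \dim X(N) = 2$ directly, without any accounting of irreducible versus reducible strata; combined with $\dim X(S) \ge 2$ for a hyperbolic 2-orbifold with two boundary circles, Lemma~\ref{lma:twodim} pins down $S = A^2(q)$.
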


\subsection{$SL_2(\mathbb{C})$-compatibility}

To prove the main theorem, we must establish a property for JSJ decompositions, defined as follows:

\begin{definition}
Suppose we are given a 3-manifold $N$ with JSJ composition given by tori $T_1, \dots, T_n$, and connected components $N_1, \dots, N_r$. We say that $\{T_i\}$ is a \emph{$SL_2(\mathbb{C})$-compatible JSJ decomposition} if there exist representations $\rho_i: \pi_1(N_i) \to SL_2\mathbb{C}$ such that the following hold.
\begin{enumerate}
	\item For any $g \in \pi_1(\partial N_j)$, $\text{tr}(\rho_j(g)) = \pm 2$. 
	\item Let $T_i', T_j'$ be torus boundary components of $N_i$ and $N_j$. If $\varphi: T_i' \to T_j'$ is a gluing homeomorphism, then for all $g \in \pi_1(\partial T_i')$, $\text{tr}(\rho_i(g)) = \text{tr}(\rho_j(\varphi(g)))$.
\end{enumerate}
\end{definition}

Informally, $SL_2(\mathbb{C})$-compatibility means that traces can be ``glued" together along their torus boundary components. 

\medskip

In the situation we deal with in the the main theorem, it will be crucial to establish $SL_2(\mathbb{C})$-compatibility of JSJ decompositions of toroidal Dehn fillings. We will see that the JSJ decompositions we deal with have components that are all either cable spaces or hyperbolic. Then the representations $\rho_i$ that realize the $SL_2(\mathbb{C})$-compatibility of these JSJ decompositions will be lifts of holonomy representations of either the underlying hyperbolic 2-orbifold or the hyperbolic structure on $N_i$. The traces of these irreducible holonomy representations will then be the limiting character of a detected surface, establishing tautological extension of $A_{k(C)}$ by Proposition \ref{prop:limitingrep}. 

\medskip

For hyperbolic components of JSJ decompositions, we have the following lemma from \cite{MenalFerrer2010TwistedCF}:

\begin{lemma}\label{lma:hyperbolicgoodgluing}\cite{MenalFerrer2010TwistedCF}
Given a hyperbolic 3-manifold $M$ with torus cusps $T_1, \dots, T_m$ and holonomy representation $\rho: \pi_1(M) \to PSL_2(\mathbb{C})$, and any $(\gamma_1, \dots, \gamma_m) \in \pi_1(T_1) \times \dots \times \pi_1(T_m)$ where the $\gamma_i$ are simple and nontrivial, there exits a lift of the holonomy representation
\begin{equation}
	\hat{\rho}: \pi_1(M) \to SL_2(\mathbb{C})
\end{equation}
such that 
\begin{equation}
	\text{tr}(\hat{\rho}(\gamma_i)) = -2 
\end{equation}
for $i = 1, \dots, m$. 
\end{lemma}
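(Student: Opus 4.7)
The plan is to use that the set of lifts of $\rho$ to $SL_2(\mathbb{C})$ is a torsor over $H^1(\pi_1(M); \mathbb{Z}/2)$, and to realize the desired sign pattern on the $\gamma_i$ by choosing the appropriate element of this torsor. First, I would invoke Thurston's theorem (in the form proved by Culler, already cited in the excerpt) to produce some initial lift $\hat{\rho}_0: \pi_1(M) \to SL_2(\mathbb{C})$. Each $\gamma_i$ is peripheral to a cusp, so $\rho(\gamma_i) \in PSL_2(\mathbb{C})$ is parabolic, which forces $\text{tr}(\hat{\rho}_0(\gamma_i)) = \pm 2$ for every $i$.

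Any other lift of $\rho$ differs from $\hat{\rho}_0$ by a homomorphism $\chi: \pi_1(M) \to \{\pm 1\}$ via $g \mapsto \chi(g)\hat{\rho}_0(g)$, and under this modification traces transform as $\text{tr}(\chi(\gamma_i) \hat{\rho}_0(\gamma_i)) = \chi(\gamma_i) \text{tr}(\hat{\rho}_0(\gamma_i))$. Setting $\sigma_i = \text{tr}(\hat{\rho}_0(\gamma_i))/2 \in \{\pm 1\}$, the lemma reduces to finding $\chi \in \mathrm{Hom}(\pi_1(M), \mathbb{Z}/2) = H^1(M; \mathbb{Z}/2)$ with $\chi(\gamma_i) = -\sigma_i$ for every $i$. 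This is precisely the surjectivity of the evaluation map
\begin{equation*}
	\mathrm{ev}: H^1(M; \mathbb{Z}/2) \longrightarrow \bigoplus_{i=1}^{m} \mathbb{Z}/2, \qquad \chi \mapsto \bigl(\chi(\gamma_1), \ldots, \chi(\gamma_m)\bigr),
\end{equation*}
or equivalently, the $\mathbb{Z}/2$-linear independence of $[\gamma_1], \ldots, [\gamma_m]$ in $H_1(M; \mathbb{Z}/2)$.

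To establish surjectivity, I would use Poincaré--Lefschetz duality $H^1(M; \mathbb{Z}/2) \cong H_2(M, \partial M; \mathbb{Z}/2)$, under which $\chi(\gamma_i)$ is computed as the mod-$2$ intersection of a dual properly embedded surface with $\gamma_i \subset T_i$. For each $i$, I would construct a properly embedded orientable surface $\Sigma_i \subset M$ whose boundary meets $\gamma_i$ an odd number of times and meets each $\gamma_j$ with $j \neq i$ an even number of times: since $\gamma_i$ is simple and nontrivial, it is primitive in $H_1(T_i; \mathbb{Z})$, so a dual simple closed curve $\delta_i \subset T_i$ with $\delta_i \cdot \gamma_i = 1$ exists on the same torus and is disjoint from the other cusps. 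The existence of such a $\Sigma_i$ then follows from the long exact sequence of the pair $(M, \partial M)$ with $\mathbb{Z}/2$ coefficients, after possibly adding even multiples of $\gamma_j$'s on the other cusps so that $[\delta_i]$, corrected in this way, lies in the image of the connecting map $\partial: H_2(M, \partial M; \mathbb{Z}/2) \to H_1(\partial M; \mathbb{Z}/2)$.

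The main obstacle is this last step, namely the construction of the $\Sigma_i$, which requires a $\mathbb{Z}/2$-coefficient analog of the classical half-lives-half-dies principle for compact orientable $3$-manifolds with toroidal boundary. Once the $\Sigma_i$ are in hand, their Poincaré--Lefschetz duals generate the target of $\mathrm{ev}$ as a $\mathbb{Z}/2$-vector space, and the resulting homomorphism $\chi$ turns $\hat{\rho}_0$ into the lift $\hat{\rho}$ required by the lemma.
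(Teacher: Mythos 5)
The lemma is not proved in the paper; it is cited directly from \cite{MenalFerrer2010TwistedCF}, so there is no internal argument to compare against. Your proof attempt, however, has a genuine gap in the central reduction.

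You claim the lemma reduces to the surjectivity of the evaluation map $\mathrm{ev}: H^1(M; \mathbb{Z}/2) \to \bigoplus_{i=1}^m \mathbb{Z}/2$, $\chi \mapsto (\chi(\gamma_1), \ldots, \chi(\gamma_m))$. This map is \emph{not} surjective in general, and in fact fails to be surjective in precisely the cases the paper later uses. By the $\mathbb{Z}/2$ half-lives-half-dies theorem quoted in this very section, the kernel of $i_*: H_1(\partial M; \mathbb{Z}/2) \to H_1(M; \mathbb{Z}/2)$ has dimension $m$; any $\gamma_i$ lying in that kernel evaluates to $0$ under every $\chi \in H^1(M; \mathbb{Z}/2)$, so the corresponding coordinate of $\mathrm{ev}$ is identically zero. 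The simplest example is a hyperbolic knot complement ($m = 1$) with $\gamma_1$ the canonical longitude, which is null-homologous mod $2$; here $\mathrm{ev}$ has image $\{0\}$ and the sign of $\mathrm{tr}(\hat{\rho}(\gamma_1))$ cannot be changed by any choice of lift. Your construction of the dual surfaces $\Sigma_i$ is therefore doomed for such $\gamma_i$: no properly embedded surface has odd mod-$2$ intersection number with a boundary curve whose class in $H_1(M; \mathbb{Z}/2)$ vanishes.

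The lemma is nonetheless true because of a further, genuinely geometric fact that your argument does not and cannot supply: whenever $\gamma_i$ is $\mathbb{Z}/2$-homologically trivial in $M$ (so that $\sigma_i$ is the same for every lift), that common value is automatically $-2$, never $+2$. This is a theorem about the holonomy of the complete hyperbolic structure (the classical case is Calegari's result that the longitude of a hyperbolic knot has trace $-2$; Menal-Ferrer--Porti's result is a generalization of this), and it does not follow from Poincaré--Lefschetz duality or from any purely algebraic-topological manipulation of $H^1(M; \mathbb{Z}/2)$. The correct statement behind the lemma is that the affine subspace of achievable sign vectors $\{(\mathrm{tr}\,\hat{\rho}(\gamma_i)/2)_i\} \subset (\mathbb{Z}/2)^m$ equals $(-1, \ldots, -1) + \mathrm{im}(\mathrm{ev})$, and the nontrivial content is pinning down the affine basepoint, not showing $\mathrm{ev}$ is onto. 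Your write-up collapses the basepoint and the linear part into a single surjectivity claim, which is where the argument breaks down.
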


\begin{remark}\label{rmk:homology}
It is known that lifts of $PSL_2(\mathbb{C})$ representations are in bijection with $H^1(\pi_1(M); \mathbb{Z}/2\mathbb{Z})$, i.e. homomorphisms $\pi_1(M) \to \mathbb{Z}/2\mathbb{Z}$. It is exactly those elements that map to the identity under all homomorphisms whose traces are independent of the lift. (These elements turn out to be squares modulo the commutator subgroup of $\pi_1(M)$.) By the universal coefficient theorem this is isomorphic to $H_1(M; \mathbb{Z}/2\mathbb{Z})$, and so lifts of the holonomy representation into $PSL_2(\mathbb{C})$ to $SL_2(\mathbb{C})$ are in bijection with the singular $\mathbb{Z}/2\mathbb{Z}$ homology. Thus, Lemma \ref{lma:hyperbolicgoodgluing} implies that any $\mathbb{Z}/2\mathbb{Z}$-homologically trivial group element of $\pi_1(M)$ must lift to trace -2 in the holonomy representation of a hyperbolic 3-manifold. In addition the traces of any elements of $\pi_1(M)$ that are $\mathbb{Z}/2\mathbb{Z}$-homologous are the same for any lift of the holonomy representation.
\end{remark}

We now show that cable spaces have a similar property. We will make liberal use of the following theorem, which is colloquially known as the ``half lives, half dies" theorem. We will use the $\mathbb{Z}/2\mathbb{Z}$ coefficients version; this is Lemma 5.3 in  \cite{Schleimer2018IntroductionTT}.

\begin{theorem}[\cite{Schleimer2018IntroductionTT}, ``half lives, half dies"]
Suppose $M$ is a compact oriented connected 3-manifold. Let $i: \partial M \to M$ be the inclusion, inducing $i_*: H_1(\partial M; \mathbb{Z}/2\mathbb{Z}) \to H_1(M; \mathbb{Z}/2\mathbb{Z})$. Then
\begin{equation}
	\frac{1}{2}\dim(H_1(\partial M)) = \dim(\ker(i_*)) = \dim(\text{im}(i_*))
\end{equation}
\end{theorem}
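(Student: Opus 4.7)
The plan is to combine the long exact sequence of the pair $(M, \partial M)$ with Poincar\'e--Lefschetz duality over the field $\mathbb{Z}/2\mathbb{Z}$. From exactness of the segment $H_2(M, \partial M) \xrightarrow{\partial} H_1(\partial M) \xrightarrow{i_*} H_1(M)$ combined with rank-nullity applied to $i_*$, one immediately obtains
\[
\dim \ker(i_*) + \dim \mathrm{im}(i_*) = \dim H_1(\partial M; \mathbb{Z}/2\mathbb{Z}),
\]
so the theorem reduces to the single identity $\dim \ker(i_*) = \dim \mathrm{im}(i_*)$.

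The key step is to identify the connecting homomorphism $\partial: H_2(M, \partial M) \to H_1(\partial M)$ with the cohomology restriction $i^*: H^1(M; \mathbb{Z}/2\mathbb{Z}) \to H^1(\partial M; \mathbb{Z}/2\mathbb{Z})$. Since $\partial M$ is a closed surface, Poincar\'e duality supplies $H^1(\partial M; \mathbb{Z}/2\mathbb{Z}) \cong H_1(\partial M; \mathbb{Z}/2\mathbb{Z})$; and Poincar\'e--Lefschetz duality for the compact 3-manifold $M$ supplies $H^1(M; \mathbb{Z}/2\mathbb{Z}) \cong H_2(M, \partial M; \mathbb{Z}/2\mathbb{Z})$. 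Both dualities hold without orientation hypotheses because the coefficients are $\mathbb{Z}/2\mathbb{Z}$. These isomorphisms are natural with respect to the inclusion $i$, producing a commuting square that identifies $i^*$ with $\partial$. Consequently $\dim \mathrm{im}(i^*) = \dim \mathrm{im}(\partial) = \dim \ker(i_*)$, the last equality being exactness at $H_1(\partial M)$.

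To close the argument, I invoke the universal coefficient theorem over the field $\mathbb{Z}/2\mathbb{Z}$, which identifies $H^k(-; \mathbb{Z}/2\mathbb{Z})$ with the linear dual of $H_k(-; \mathbb{Z}/2\mathbb{Z})$ and realizes $i^*$ as the transpose of $i_*$; transposition preserves rank, so $\dim \mathrm{im}(i_*) = \dim \mathrm{im}(i^*)$. Chaining the equalities,
\[
\dim \mathrm{im}(i_*) \;=\; \dim \mathrm{im}(i^*) \;=\; \dim \ker(i_*),
\]
and plugging into the rank-nullity identity from the first paragraph gives $\dim \ker(i_*) = \dim \mathrm{im}(i_*) = \tfrac{1}{2}\dim H_1(\partial M; \mathbb{Z}/2\mathbb{Z})$, as claimed.

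The main technical point that genuinely requires care is the naturality square identifying $i^*$ with $\partial$. This rests on the standard compatibility between cap product and the relative fundamental class, which fits the cohomology long exact sequence of $(M, \partial M)$ and the homology long exact sequence of $(M, \partial M)$ into a ladder whose vertical arrows are the duality isomorphisms. Disconnected $\partial M$ is not an issue, since each component is a closed surface and Poincar\'e duality is applied componentwise. Once that ladder is set up, the rest of the argument is purely formal, so I do not expect serious obstacles beyond the duality setup itself.
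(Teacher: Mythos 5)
Your proof is correct and is the standard argument for ``half lives, half dies.'' The paper does not supply a proof of this statement; it simply cites it as Lemma~5.3 of \cite{Schleimer2018IntroductionTT}, so there is no internal proof to compare against, but your route is precisely the textbook one. You reduce the claim via rank--nullity to the single equality $\dim\ker(i_*) = \dim\operatorname{im}(i_*)$, identify the connecting map $\partial\colon H_2(M,\partial M)\to H_1(\partial M)$ with $i^*\colon H^1(M)\to H^1(\partial M)$ using the Poincar\'e--Lefschetz duality ladder (with the irrelevant sign), use exactness at $H_1(\partial M)$ to get $\dim\operatorname{im}(\partial)=\dim\ker(i_*)$, and then use the field-coefficient universal coefficient theorem to see that $i^*$ is the transpose of $i_*$, hence has image of the same dimension. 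Two minor stylistic remarks: the rank--nullity identity in your first paragraph is pure linear algebra applied to $i_*$ and does not actually use exactness at all --- exactness is only needed for $\operatorname{im}(\partial)=\ker(i_*)$; and while your observation that $\mathbb{Z}/2$-coefficients dispense with the orientability hypothesis is correct, the statement as quoted already assumes $M$ is oriented, so this is a bonus rather than a necessity. Your handling of a disconnected boundary by applying duality componentwise is also fine.
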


\begin{lemma}\label{lma:cablespacegoodgluing}
Suppose $M$ is a cable space fibered over $A^2(q)$ with torus boundary components $T_1, T_2$. Let $\rho: \pi_1(M) \to PSL_2(\mathbb{C})$ be the holonomy representation of $A^2(q)$. Given and $(\gamma_1, \gamma_2) \in \pi_1(T_1) \times \dots \times \pi_1(T_m)$ with $\gamma_i$ simple and nontrivial, there exists a list of the holonomy representation 
\begin{equation}
	\widehat{\rho}: \pi_1(M) \to SL_2(\mathbb{C})
\end{equation}
such that 
\begin{equation}
	\text{tr}(\widehat{\rho}(\gamma_i)) = -2
\end{equation}
for $i = 1, 2$.
\end{lemma}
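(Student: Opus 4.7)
My approach parallels the argument of Lemma \ref{lma:hyperbolicgoodgluing} (Menal-Ferrer), replacing the hyperbolic 3-manifold structure with the Seifert-fibered structure of the cable space. I would start from the standard presentation $\pi_1(M) = \langle a, b, f \mid [a,f],\, [b,f],\, (ab)^q = f^s\rangle$, where $f$ is the regular fiber, $a$ and $b$ are lifts (via sections) of the two boundary circles of $A^2(q)$, and $s$ is the Seifert invariant coprime to $q$; the peripheral subgroups are $\pi_1(T_1) = \langle a, f\rangle$ and $\pi_1(T_2) = \langle b, f\rangle$. Interpreting $A^2(q)$ as a complete hyperbolic orbifold with parabolic cusps at its annular ends, $\rho$ factors through the projection $\pi_1(M) \twoheadrightarrow \pi_1^{\mathrm{orb}}(A^2(q))$ that kills $f$, and sends $a, b$ to parabolic elements of $PSL_2(\mathbb{C})$. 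Consequently every lift $\hat\rho$ to $SL_2(\mathbb{C})$ takes $\pi_1(T_i)$ into a Borel subgroup containing $\pm I$, so $\text{tr}(\hat\rho(\gamma)) \in \{\pm 2\}$ for all $\gamma \in \pi_1(T_i)$.

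By Remark \ref{rmk:homology}, lifts of $\rho$ form a torsor over $H^1(M;\mathbb{Z}/2\mathbb{Z}) \cong \mathrm{Hom}(H_1(M;\mathbb{Z}/2\mathbb{Z}),\mathbb{Z}/2\mathbb{Z})$, with a twist by $\chi$ scaling $\text{tr}(\hat\rho(\gamma))$ by $(-1)^{\chi([\gamma])}$. Abelianizing the presentation modulo $2$ in each parity case for $(q,s)$ yields $H_1(M;\mathbb{Z}/2\mathbb{Z}) \cong (\mathbb{Z}/2\mathbb{Z})^2$, and the half lives half dies theorem then forces the inclusion-induced map $H_1(\partial M;\mathbb{Z}/2\mathbb{Z}) \cong (\mathbb{Z}/2\mathbb{Z})^4 \to H_1(M;\mathbb{Z}/2\mathbb{Z})$ to be surjective with two-dimensional kernel. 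For a primitive $\gamma_i = a_i^{p_i} f^{r_i}$, the class $[\gamma_i] \in H_1(M;\mathbb{Z}/2\mathbb{Z})$ can be read off directly from this abelianization.

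The task is then to produce $\chi \in H^1(M;\mathbb{Z}/2\mathbb{Z})$ with $(-1)^{\chi([\gamma_i])}\text{tr}(\hat\rho_0(\gamma_i)) = -2$ for both $i=1,2$, starting from a suitable base lift $\hat\rho_0$. When $[\gamma_1]$ and $[\gamma_2]$ are linearly independent over $\mathbb{Z}/2\mathbb{Z}$, the evaluation map $\chi \mapsto (\chi([\gamma_1]),\chi([\gamma_2]))$ is surjective and the required $\chi$ exists. When $[\gamma_i] = 0$, which the case analysis shows can occur only when $q$ is even and $s$ is odd, the relation $\hat\rho((ab)^q) = \hat\rho(f)^s$ combined with the fact that both $SL_2$-lifts of the order-$q$ elliptic $\rho_0(ab)$ satisfy $(\cdot)^q = -I$ when $q$ is even, forces $\hat\rho(f) = -I$ in every lift, giving $\text{tr}(\hat\rho(\gamma_i)) = -2$ automatically. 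The main obstacle is the remaining case $[\gamma_1] = [\gamma_2] \neq 0$: here $\chi([\gamma_1]) = \chi([\gamma_2])$ for every $\chi$, so the two trace flips are yoked and one must choose $\hat\rho_0$ so that the initial traces already agree. I expect this can be arranged using the lift constraint at the exceptional fiber together with the fact that $\gamma_1\gamma_2^{-1}$ is $\mathbb{Z}/2\mathbb{Z}$-null-homologous (hence has lift-independent trace), reducing the problem to a case-by-case verification in each parity regime of $(q,s)$.
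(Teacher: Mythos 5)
Your strategy matches the paper's: reduce to the $\mathbb{Z}/2\mathbb{Z}$-homology of $M$, invoke the ``half lives, half dies'' theorem to determine the kernel of $H_1(\partial M;\mathbb{Z}/2\mathbb{Z}) \to H_1(M;\mathbb{Z}/2\mathbb{Z})$, interpret lifts as a torsor over $H^1(M;\mathbb{Z}/2\mathbb{Z})$, and run a parity case analysis on $q$ and the fiber exponent. The easy cases you dispose of cleanly: when $[\gamma_1]$ and $[\gamma_2]$ are independent in $H_1(M;\mathbb{Z}/2\mathbb{Z})$ the evaluation map onto $(\mathbb{Z}/2\mathbb{Z})^2$ is surjective, and when some $[\gamma_i] = 0$ (possible only when $q$ is even) the relation $(ab)^q = f^s$ forces $\hat\rho(f) = -I$ in every lift and the primitivity of $\gamma_i$ then pins its trace at $-2$.

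The genuine gap is the case $[\gamma_1] = [\gamma_2] \neq 0$, which you flag but do not resolve, and it is precisely the nontrivial content of the paper's Cases 2 and 3. When $[\gamma_1] = [\gamma_2]$ the torsor structure only tells you that the two signs flip together: the \emph{ratio} $\text{tr}(\hat\rho(\gamma_1))/\text{tr}(\hat\rho(\gamma_2)) \in \{\pm 1\}$ is lift-independent, but nothing in the homological bookkeeping alone forces this ratio to be $+1$ rather than $-1$. If it were $-1$, the two traces would disagree in every lift and the lemma would be false. Your proposed remedy --- that $\gamma_1\gamma_2^{-1}$ is $\mathbb{Z}/2\mathbb{Z}$-null-homologous and hence has lift-independent trace --- does not close the gap, because $\gamma_1$ and $\gamma_2$ do not commute in $\pi_1(M)$ and so $\text{tr}(\hat\rho(\gamma_1\gamma_2^{-1}))$ carries no direct information about $\text{tr}(\hat\rho(\gamma_1))$ versus $\text{tr}(\hat\rho(\gamma_2))$. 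What is actually needed, and what the paper supplies, is an explicit computation with the holonomy of $A^2(q)$ realized as a deformation of the complete structure on the thrice-punctured sphere: the parabolic matrices for $a$, $b$, and $c = ab$ satisfy a sign identity (the sign of $\text{tr}(\hat\rho(c))$ is $-$ iff the signs of $\text{tr}(\hat\rho(a))$ and $\text{tr}(\hat\rho(b))$ agree), and the cone-point relation forces the sign of $\text{tr}(\hat\rho(c))$ in each parity regime. Combining these two facts is what pins the relative sign of the boundary traces at $+1$ (with $a \leftrightarrow b$ when $p$ is even, and $ah \leftrightarrow b$ when $p$ is odd), after which the homological flip you describe finishes the argument. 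Without this trace identity your proof does not go through.
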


\begin{proof}
The holonomy representation of the thrice-punctured sphere, whose fundamental group is given by $\langle a, b, c \mid c = ab \rangle$, is 
\begin{equation}
	\rho(a) = \left[\pm\begin{pmatrix}1&2\\0&1\end{pmatrix}\right] \ \ \ \ \ \rho(b) = \left[\pm\begin{pmatrix}1&0\\-2&1\end{pmatrix}\right] \ \ \ \ \ \rho(c) = \left[\pm\begin{pmatrix}-3&2\\-2&1\end{pmatrix}\right]
\end{equation}
In particular, for any lift $\widehat{\rho}$ of this representation to $SL_2(\mathbb{C})$, if the signs of $\widehat{\rho}(a)$ and $\widehat{\rho}(b)$ are the same, then the sign of $\widehat{\rho}(c)$ is negative, and if the traces of $a$ and $b$ have different sign, then $c$ has positive sign. The holonomy representation of $A^2(q)$ is given by 
\begin{equation}
	\rho(a) = \left[\pm\begin{pmatrix}1&2\\0&1\end{pmatrix}\right] \ \ \ \ \ \rho(b) = \left[\pm\begin{pmatrix}1&0\\x_q&1\end{pmatrix}\right] \ \ \ \ \ \rho(c) = \left[\pm\begin{pmatrix}1 + 2x_q&2\\x_q&1\end{pmatrix}\right]
\end{equation}
where $\xi_{2q}$ is a primitive $2q$th root of unity and $x_q$ is such that
\begin{equation}
	2 + 2x_q = -(\xi_{2q} + \xi_{2q}^{-1})
\end{equation}
Since the holonomy of $A^2(q)$ is a continuous deformation of the holonomy of the thrice-punctured sphere, it follows that the signs of the traces of a lift of $a$ and $b$ are equal if and only if $c$ lifts to a negative trace, as is true for the thrice-punctured sphere. 
	
\medskip
	
We divide into three cases. Let $\pi_1(A^2(q)) = \langle a, b, c \mid c = ab, c^q = 1 \rangle$. The strategy will be to determine the kernel of $H_1(\partial M; \mathbb{Z}/2\mathbb{Z}) \to H_1(M; \mathbb{Z}/2\mathbb{Z})$, and use the homological interpretation of the kernel in Remark \ref{rmk:homology} to reach our conclusion.
	
\medskip
	
\underline{Case 1: $q$ is even.} In this case, we have $c^qh^p = 1$ for some $p$ which must be odd. The holonomy representation $\rho$ satisfies
\begin{equation}
	\rho(c) = \left[\pm\begin{pmatrix}\xi_{2q}&0\\0&\xi_{2q}^{-1}\end{pmatrix}\right]
\end{equation}
where $\xi_{2q}$ denotes a primitive $2q$th root of unity. In this case, since $q$ is even, given any lift $\widehat{\rho}: \pi_1(M) \to SL_2(\mathbb{C})$, $\widehat{\rho}(c^q) = -I$. By Lemma 3.1 in \cite{Kitano1994ReidemeisterTO}, we know that $\rho(h) = [\pm I]$, and the relation $c^qh^p = 1$ implies that for all lifts $\widehat{\rho}$, $\widehat{\rho}(h) = -I$ as well. Let $h_j \in H_1(T_j; \mathbb{Z}/2\mathbb{Z})$ be the singular homology classes of $h$ in the boundary tori of $M$. Since the sign of $\text{tr}(\widehat{\rho}(h))$ is fixed for all lifts $\widehat{\rho}$, both $h_j$ lie in the kernels of $H_1(T_j; \mathbb{Z}/2\mathbb{Z}) \to H_1(M; \mathbb{Z}/2\mathbb{Z})$. By the ``half lives, half dies" theorem, $h_1, h_2$ generate the kernel of $H_1(\partial M; \mathbb{Z}/2\mathbb{Z}) \to H_1(M; \mathbb{Z}/2\mathbb{Z})$. In particular, the signs of the traces of pairs of other elements in $\pi_1(T_j)$ can be changed independently, which proves the lemma in this case.
	
\medskip
	
\underline{Case 2: $q$ is odd, $c^qh^p = 1$ with $p$ even.} In this case, the sign of the trace of $c$ must be negative for all lifts $\widehat{\rho}$, i.e.
\begin{equation}
	\widehat{\rho}(c) = \begin{pmatrix}-\xi^{2q} & 0 \\ 0 & -\xi_{2q}^{-1}\end{pmatrix}
\end{equation}
This means that the sign of the traces of $a$ and $b$ are equal for all lifts. In addition, the traces of $h$ as an element of $\pi_1(T_1)$ and $h$ as an element of $\pi_1(T_2)$ are also equal. View $H_1(\partial M; \mathbb{Z}/2\mathbb{Z}) \cong H_1(T_1; \mathbb{Z}/2\mathbb{Z}) \oplus H_1(T_2; \mathbb{Z}/2\mathbb{Z})$. By the ``half lives, half dies" theorem, the kernel of $H_1(\partial M; \mathbb{Z}/2\mathbb{Z}) \to H_1(M; \mathbb{Z}/2\mathbb{Z})$ is generated by $[a] \oplus [b]$ and $[h] \oplus [h]$. In particular, there exist lifts of $\rho$ such that any two of $[a], [h], [ah]$ have trace $-2$, meaning that the same respective two of $[b], [h], [bh]$ also have trace $-2$, proving the lemma in this case.
	
\medskip
	
\underline{Case 3: $q$ is odd, $c^qh^p = 1$ with $p$ odd.} In this case, the signs of the traces of $c$ and $h$ are opposite for all lifts $\widehat{\rho}$. In particular, for all lifts, the trace of $ah$ and the trace of $b$ have the same sign, for if $h$ has positive sign, then $c$ has negative sign, and hence the traces of $a$ and $b$ have the same sign, and if $h$ has negative sign, then $c$ has positive sign, and the traces of $a$ and $b$ have opposite sign. By the ``half lives, half dies" theorem, the kernel of $H_1(\partial M; \mathbb{Z}/2\mathbb{Z}) \to H_1(M; \mathbb{Z}/2\mathbb{Z})$ is generated by $[ah] \oplus [b]$ and $[h] \oplus [h]$. In particular, there exist lifts of $\rho$ such that any two of $[a], [h], [ah]$ have trace $-2$, meaning that the same  respective two of $[bh], [h], [b]$ also have trace $-2$, proving the lemma in this case. 
\end{proof}

\begin{remark}
Lemmas \ref{lma:hyperbolicgoodgluing} and \ref{lma:cablespacegoodgluing} show that for any $M$ hyperbolic or a cable space, any $\gamma \in \pi_1(\partial M)$ whose trace is independent of the chosen lift $\hat{\rho}$ has trace -2. Otherwise, one may change the sign of the trace of $\gamma$ independent of the signs of the traces of the other $\mathbb{Z}/2\mathbb{Z}$ homology classes.
\end{remark}

\subsection{Results on $SL_2(\mathbb{C})$-compatibility}

\begin{theorem}\label{thm:compatible}
Let $M$ be an irreducible closed 3-manifold with a JSJ decomposition with JSJ tori $\{T_i\}_{i=1}^m$, gluing homeomorphisms $\{\varphi_i\}_{i=1}^m$, and complementary components $\{M_i\}_{i=1}^n$ such that:
\begin{enumerate}
	\item Each $M_i$ is either hyperbolic or a cable space.
	\item Each $M_i$ has exactly two torus boundary components $T_{i_1}, T_{i_2}$, and the JSJ complementary components are arranged in a circle.
\end{enumerate} 
Then the JSJ decomposition is $SL_2(\mathbb{C})$-compatible.
\end{theorem}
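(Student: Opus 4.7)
The plan is to encode $SL_2(\mathbb{C})$-compatibility as a $\mathbb{Z}/2\mathbb{Z}$-cohomological cocycle problem on the cyclic JSJ graph and to solve it using Lemmas~\ref{lma:hyperbolicgoodgluing} and~\ref{lma:cablespacegoodgluing} together with the half-lives-half-dies theorem.

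First I would set up the framework. By Corollary~\ref{cor:cablespaces} each $M_i$ is hyperbolic or a cable space, so it carries a natural holonomy representation $\rho_i^0:\pi_1(M_i)\to PSL_2(\mathbb{C})$ (of its hyperbolic structure, or of the base orbifold $A^2(q_i)$ in the cable case). By Remark~\ref{rmk:homology} and the remark following Lemma~\ref{lma:cablespacegoodgluing}, lifts of $\rho_i^0$ to $SL_2(\mathbb{C})$ form a torsor over $H^1(M_i;\mathbb{Z}/2\mathbb{Z})$, and every lift restricts on each boundary torus $T\subset\partial M_i$ to a representation by parabolic matrices. Hence all boundary traces lie in $\{\pm 2\}$, which already verifies condition (1) of $SL_2(\mathbb{C})$-compatibility, and the rule $g\mapsto\tfrac{1}{2}\text{tr}(\hat\rho_i(g))$ defines a sign character $\sigma_i^T\in H^1(T;\mathbb{Z}/2\mathbb{Z})$. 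Labeling the components cyclically as $M_1,\dots,M_n$ with gluing tori $T_1,\dots,T_n$ (where $T_i = M_i\cap M_{i+1}$, indices mod $n$), condition (2) becomes $\sigma_i^{T_i}=\sigma_{i+1}^{T_i}$ in $H^1(T_i;\mathbb{Z}/2\mathbb{Z})$. Setting $\delta_i:=\sigma_i^{T_i}-\sigma_{i+1}^{T_i}$ for any chosen initial lifts and noting that modifying the lift on $M_j$ by $\phi_j\in H^1(M_j;\mathbb{Z}/2\mathbb{Z})$ shifts $\sigma_j^T$ by $\phi_j|_T$, the problem reduces to solving
\[\phi_i|_{T_i}+\phi_{i+1}|_{T_i}=\delta_i \qquad (i=1,\dots,n)\]
in $\mathbb{Z}/2\mathbb{Z}$-coefficients, which is exactly the Mayer--Vietoris coboundary equation with obstruction living in $H^2(M;\mathbb{Z}/2\mathbb{Z})$.

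To kill this obstruction I would first pick, for each torus $T_i$, a common simple nontrivial curve $\alpha_i\in\pi_1(T_i)$, and apply Lemma~\ref{lma:hyperbolicgoodgluing} or~\ref{lma:cablespacegoodgluing} to normalize the initial lift on each $M_j$ so that $\text{tr}(\hat\rho_j(\alpha_{j-1}))=\text{tr}(\hat\rho_j(\alpha_j))=-2$. After this normalization each $\delta_i$ vanishes on $\alpha_i$ and so lies in a one-dimensional subspace of $H^1(T_i;\mathbb{Z}/2\mathbb{Z})$. By the half-lives-half-dies theorem, the image of $H^1(M_j;\mathbb{Z}/2\mathbb{Z})\to H^1(\partial M_j;\mathbb{Z}/2\mathbb{Z})$ has dimension two, so after absorbing the pinning conditions on $\alpha_{j-1}$ and $\alpha_j$ there remains a residual adjustment on each $M_j$ that can be used to kill the one-dimensional residual cocycle around the cycle.

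The hard part will be the final global parity check: a one-dimensional coboundary equation on an $n$-cycle carries a potential obstruction $\sum_i\delta_i(\beta_i)\in\mathbb{Z}/2\mathbb{Z}$ (for $\beta_i$ a companion generator to $\alpha_i$), which must vanish for a solution to exist. I expect to dispatch this using the explicit computations of the kernels $K_j=\ker(H_1(\partial M_j;\mathbb{Z}/2\mathbb{Z})\to H_1(M_j;\mathbb{Z}/2\mathbb{Z}))$ from the proof of Lemma~\ref{lma:cablespacegoodgluing}: in each cable space the regular fiber class lies in $K_j$ on both boundary tori, and therefore its sign is automatically $-1$, so choosing $\alpha_i$ to be the fiber class whenever an adjacent $M_j$ is a cable space aligns the forced signs and collapses the global parity obstruction. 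The remaining purely hyperbolic-to-hyperbolic segments of the cycle can then be closed up inductively using the full flexibility in Lemma~\ref{lma:hyperbolicgoodgluing}.
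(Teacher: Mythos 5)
Your cohomological reframing is essentially the language the paper's argument lives in implicitly (lifts as a torsor over $H^1(M_j;\mathbb{Z}/2\mathbb{Z})$, boundary signs as characters, matching as a coboundary equation on the cyclic JSJ graph), and the initial normalizations via Lemmas~\ref{lma:hyperbolicgoodgluing} and~\ref{lma:cablespacegoodgluing} plus half-lives-half-dies are the right ingredients. But the proposal stops precisely at the step that carries all the mathematical content, and the strategy sketched for that step does not cover all the cases Theorem~\ref{thm:compatible} permits.

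The gap is in the ``final global parity check.'' You correctly identify that after pinning each $\alpha_i$ there is a residual one-dimensional cocycle on the $n$-cycle whose class must vanish, and you write ``I expect to dispatch this'' by choosing $\alpha_i$ to be the regular fiber near cable spaces. That only works when the base orbifold parameter $q$ is even: in that case (Case~1 of Lemma~\ref{lma:cablespacegoodgluing}) each copy of the fiber class, $[h]_1\oplus 0$ and $0\oplus[h]_2$, separately lies in $K_j=\ker\bigl(H_1(\partial M_j;\mathbb{Z}/2\mathbb{Z})\to H_1(M_j;\mathbb{Z}/2\mathbb{Z})\bigr)$. When $q$ is odd (Cases~2 and~3), only the diagonal class $[h]\oplus[h]$ is in $K_j$; the fiber on a single boundary torus is \emph{not} killed, so pinning $\alpha_i=[h]$ does not force a sign and the argument breaks. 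Worse, Theorem~\ref{thm:compatible} does not assume that no torus bounds two hyperbolic components (that hypothesis appears only in Theorem~\ref{thm:main}), so you must also handle hyperbolic $M_j$ whose kernel $K_j$ is ``spanning'' rather than split across the two tori, where there is no fiber class at all and ``inductively using the full flexibility in Lemma~\ref{lma:hyperbolicgoodgluing}'' does not close a cycle --- induction along a cycle is exactly where the parity obstruction lives. The paper's proof deals with this by explicitly classifying how $K_j$ can sit inside $H_1(\partial M_j;\mathbb{Z}/2\mathbb{Z})$ (four types), using the two lemmas to rule out two of them, and then observing that the ``spanning'' components rigidly transmit the sign constraint from one boundary torus to the other so that a chain of them behaves like a single composed gluing between ``split'' components. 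Your proposal assumes a residual adjustment exists on each $M_j$ after pinning two curves, but for a spanning kernel the two pinnings are not independent --- one determines the other --- so the residual space may be trivial or the pinnings may even be inconsistent. Without the kernel-type classification and the chain-composition observation, the coboundary equation is not solved.
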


\begin{proof}
Take any $M_i$, and let $\rho_i$ be either the holonomy representation or the composition of the Seifert fibration map with the holonomy of $A^2(q)$. Let $i_*: H_1(\partial M_i; \mathbb{Z}/2\mathbb{Z}) \cong \mathbb{Z}^4 \to H_1(M_i; \mathbb{Z}/2\mathbb{Z})$ be the induced map of the boundary inclusion. By the ``half lives, half dies" theorem, the kernel is 2-dimensional, i.e.
\begin{equation}
	\ker(i_*) = \langle \gamma_1, \gamma_2 \rangle \cong (\mathbb{Z}/2\mathbb{Z})^2 \subset H_1(\partial M_i; \mathbb{Z}/2\mathbb{Z})
\end{equation}
We now analyze the possibilities for $\gamma_1, \gamma_2$. First, note that there are two possibilities for $\gamma_k$:
\begin{enumerate}
	\item $\gamma_k \in H_1(T_{i_j}; \mathbb{Z}/2\mathbb{Z})$ for some $j \in \{1, 2\}$. We say $\gamma_k$ is \emph{contained in $T_{i_j}$} in this case. By Lemmas \ref{lma:hyperbolicgoodgluing} and \ref{lma:cablespacegoodgluing}, the trace of any lift of $\rho_i$ evaluated on a representative of $\gamma_k$ must be -2.
	\item $\langle \gamma_k \rangle \cap H_1(T_{i_j}; \mathbb{Z}/2\mathbb{Z}) \neq 0$ for $j = 1, 2$. We say $\gamma_k$ is \emph{spanning} in this case. The trace of any lift of $\rho_i$ evaluated on representatives of $\langle \gamma_k \rangle \cap H_1(T_{i_1}; \mathbb{Z}/2\mathbb{Z})$ and $\langle \gamma_k \rangle \cap H_1(T_{i_2}; \mathbb{Z}/2\mathbb{Z})$ are equal, since they are $\mathbb{Z}/2\mathbb{Z}$-homologous. 
\end{enumerate}
Here are the cases for how $\gamma_1, \gamma_2$ can interact with each other.
\begin{itemize}
	\item Type 0: $\gamma_1$ and $\gamma_2$ are contained in the same torus boundary component. If $\langle \gamma_1, \gamma_2 \rangle = H_1(T_{i_j}; \mathbb{Z}/2\mathbb{Z})$ for $j \in \{1, 2\}$, we have a generating set of representatives $(\widehat{\gamma_1}, \widehat{\gamma_2}) = \pi_1(T_{i_j})$. By Lemmas \ref{lma:hyperbolicgoodgluing} and \ref{lma:cablespacegoodgluing}, for any lift $\widehat{\rho_i}$ to $SL_2(\mathbb{C})$, $\text{tr}(\widehat{\rho_i}(\widehat{\gamma_1})) = \text{tr}(\widehat{\rho_i}(\widehat{\gamma_2})) = -2$, and since $\widehat{\gamma_1}$ and $\widehat{\gamma_2}$ commute, it follows that for all $\widehat{\rho_i}$, $\text{tr}(\widehat{\rho_i}(\widehat{\gamma_1}\widehat{\gamma_2})) = 2$, which contradicts either Lemma \ref{lma:hyperbolicgoodgluing} or Lemma \ref{lma:cablespacegoodgluing}.
	\item Type 1: $\gamma_1$ and $\gamma_2$ are contained in $T_{i_1}$ and $T_{i_2}$, respectively. Then let $a_j \in \pi_1(\partial T_{i_j})$ be a representative of $\gamma_j$, and let $b_j \in \pi_1(\partial T_{i_j})$ be a representative of an element of $H_1(\partial T_{i_j}; \mathbb{Z}/2\mathbb{Z})$ not in the kernel of $i_*$. By Case 1 in the proof of Lemma \ref{lma:cablespacegoodgluing}, cable spaces with base orbifold $A^2(q)$ where $q$ is even fall into this type.
	\item Type 2: $\gamma_1$ is contained in $T_{i_1}$ and $\gamma_2 = (\gamma_2^1, \gamma_2^2)$ is spanning, with $\gamma_2^k \in H_1(T_{i_k}; \mathbb{Z}/2\mathbb{Z})$. Let $a \in \pi_1(\partial T_{i_1})$ be a representative of $\gamma_1$, and let $g^k \in \pi_1(T_{i_k})$ be a representative of $\gamma_2^k$. This means that for any lift $\widehat{\rho_i}$ of $\rho_i$, $\text{tr}(\widehat{\rho_i}(a)) = -2$ and $\text{tr}(\widehat{\rho_i}(g^1)) = \text{tr}(\widehat{\rho_i}(g^2))$. Let $b \in T_{i_1}$ be a representative of the $\mathbb{Z}/2\mathbb{Z}$-homology class that is not $\gamma_1$ or $\gamma_2^1$. Then for all lifts $\widehat{\rho_i}$, $\text{tr}(\widehat{\rho_i}(b)) = -\text{tr}(\widehat{\rho_i}(g^1)) = -\text{tr}(\widehat{\rho_i}(g^2))$, which is a contradiction of either Lemma \ref{lma:hyperbolicgoodgluing} or Lemma \ref{lma:cablespacegoodgluing}. If $\gamma_1 = \gamma_2^1$, then both $\gamma_1$ and $\gamma_2^1$ are -2 for all lifts, are we are in the situation of Type 1. So this case can be subsumed into Type 1.
	\item Type 3: $\gamma_1 = (\gamma_1^1, \gamma_2^2)$ and $\gamma_2 = (\gamma_2^1, \gamma_2^2)$ are both spanning, with $\gamma_j^k \in H_1(T_{i_k}; \mathbb{Z}/2\mathbb{Z})$. Let $g_j^k \in \pi_1(T_{i_k})$ be a representative of $\gamma_j^k$. Then for any lift $\widehat{\rho_i}$ of $\rho_i$, $\text{tr}(\widehat{\rho_i}(g_j^1) =  \text{tr}(\widehat{\rho_i}(g_j^2))$. By cases 2 and 3 in the proof of Lemma \ref{lma:cablespacegoodgluing}, cable spaces with base orbifold $A^2(q)$ where $q$ is odd fall into this type.
\end{itemize}
Since components of Type 0 or Type 2 cause contradictions, they do not exist. So we can label the components $M_i$ to be of Type 1 or 3, depending on how the map $i_*$ behaves. We now deal with the possibilities for the JSJ decomposition of $M$.
\begin{itemize}
	\item All the $M_i$ are of Type 1. If we have components $M_i$ with boundary torus $T_{i_1}$ and $M_{i+1}$ with boundary torus $T_{{i+1}_2}$, let $\varphi: \pi_1(T_{i_1}) \to \pi_1(T_{{i+1}_2})$ be the gluing isomorphism. Since both $M_i$ and $M_{i+1}$ are of Type 1, there exist generating sets $a_1, b_1 \in \pi_1(T_{i_1})$ and $a_2, b_2 \in \pi_1(T_{{i+1}_2})$ such that for all lifts $\widehat{\rho_i}$ and $\widehat{\rho_{i+1}}$, $\text{tr}(\widehat{\rho_i}(a_1)) = \text{tr}(\widehat{\rho_{i+1}}(a_2)) = -2$. If $\varphi(a_1)$ is $\mathbb{Z}/2\mathbb{Z}$-homologous to $a_2$, then we can pick $\widehat{\rho_i}, \widehat{\rho_{i+1}}$ so that $\text{tr}(\widehat{\rho_i}(b_1)) = \text{tr}({\rho_{i+1}}(\varphi(b_2))) = -2$, and the traces are equal on all of $\pi_1(T_{i_1})$. If $\varphi(a_1)$ is not $\mathbb{Z}/2\mathbb{Z}$-homologous to $a_2$, we can pick a lift $\widehat{\rho_{i+1}}$ so that $\text{tr}(\widehat{\rho_{i+1}}(\varphi(a_1))) = -2$, and a lift $\widehat{\rho_i}$ such that $\text{tr}(\widehat{\rho_i}(\varphi^{-1}(a_2))) = -2$. Note that in components of type 1, the choices we make on signs of traces on one boundary torus are independent of choices we make on the other. Therefore, we can make such choices for all JSJ tori, and so in this case the JSJ decomposition of $M$ is $SL_2(\mathbb{C})$-compatible. 
	\item All of the $M_i$ are of Type 3. Then we may choose lifts $\widehat{\rho_i}$ such that all peripheral elements have trace 2. This satisfies the  $SL_2(\mathbb{C})$-compatibility condition, and we are done in this case. 
	\item There are both $M_i$ of Type 1 and $M_i$ of Type 3. Due to the assumption that the JSJ complementary regions are arranged in a circle, any chain of Type 3 complementary components must end in Type 1 components on both sides. Notice that a chain of Type 3 complementary components has the effect of composing a gluing homeomorphism between two Type 1 components. So we are in the same situation as when all the $M_i$ are of Type 1, where the gluing homomorphisms go through chains of Type 3 components. The same proof as that case goes through. So in this case, $M$ is also $SL_2(\mathbb{C})$-compatible.
\end{itemize}
So in all possible cases, the JSJ decomposition of $M$ is $SL_2(\mathbb{C})$-compatible, as desired.
\end{proof}

\subsection{Final background results}

\begin{lemma}\label{lma:verticesedges}
Suppose we are in the situation in the hypothesis of Theorem \ref{thm:main}. Then the number of complementary regions of $\overline{M \setminus \bigcup_{i=1}^nT_i}$, the number of punctured JSJ tori $T_i$, the number of JSJ complementary regions of $M(0)$, and the number of JSJ tori of $M(0)$ are all equal to $n$. The JSJ complementary regions of $M(0)$ have two boundary tori. In particular, the JSJ complementary regions of $M(0)$ are arranged in a circle. 
\end{lemma}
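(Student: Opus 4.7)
The plan rests on the observation that each $T_i$ has slope 0, so in $M(0)$ it caps off to a closed torus $\hat{T}_i$ homologous to a Seifert surface of $K$; hence all the $\hat{T}_i$ (with compatible orientations) represent the same primitive generator of $H_2(M(0); \mathbb{Z}) \cong \mathbb{Z}$. This common homology class is what will force the complementary regions into a cyclic arrangement with two boundary tori each, and a filling-solid-torus bookkeeping will transfer the count from $M(0)$ back to $M$. The equality $(d) = n$ is essentially built into the hypothesis that $\{T_i\}$ is the system of once-punctured JSJ tori of $M(0)$: any additional JSJ torus of $M(0)$ avoiding the filling solid torus would contradict atoroidality of $M$, while one intersecting it transversely would yield an incompressible once-punctured surface in $M$ which, being a JSJ torus of $M(0)$ after capping, must already be among the $T_i$.

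For the core count $(c) = n$, I would pass to the infinite cyclic cover $\pi: \tilde{M}(0) \to M(0)$ associated to the Poincar\'e dual class $\alpha \in H^1(M(0); \mathbb{Z})$, equivalently realizing $\alpha$ by a circle-valued Morse function $f: M(0) \to S^1$ taking each $\hat{T}_i$ to a distinct regular value. By the standard fact that the lift of a surface Poincar\'e-dual to the defining class of a $\mathbb{Z}$-cover separates the cover, each lift $\tilde{T}_i^{(k)}$ separates $\tilde{M}(0)$, and the countable collection $\{\tilde{T}_i^{(k)}\}$ inherits a linear order along the $\mathbb{Z}$-direction of the deck action. Quotienting by the generator of the deck group then produces exactly $n$ components of $M(0) \setminus \bigcup \hat{T}_i$, each cobounded by two consecutive $\hat{T}_i$'s and arranged cyclically around $S^1$, establishing $(c) = n$ together with the two-boundary-tori-per-region statement and the circular arrangement.

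To transfer this back to $M$, I would write $M(0) = M \cup_{\partial M} V$ where $V$ is the filling solid torus, and use that the $n$ meridian disks capping off the $T_i$ cut $V$ into $n$ wedges, each attached to one of the $n$ annuli into which the longitudes $\partial T_i$ divide $\partial M$. Each JSJ complementary region of $M(0)$ contains exactly one such wedge, and removing it yields a component of $M \setminus \bigcup T_i$; this gives a bijection and hence $(a) = n$. The main obstacle is arranging $f$ (equivalently, the linear order of lifts in $\tilde{M}(0)$) so that each $\hat{T}_i$ appears as a single connected regular fiber and the resulting cyclic order of fibers matches the cyclic order of complementary regions; this should follow from standard general-position arguments applied to a circle-valued Morse representative of $\alpha$, using incompressibility and pairwise disjointness of the $\hat{T}_i$ to prevent the fibers from breaking into additional components during the isotopy.
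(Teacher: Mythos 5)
Your approach via the infinite cyclic cover is genuinely different in flavor from the paper's, which works directly in $M$ by tracing a meridian across the parallel curves $\partial T_i$ on $\partial M$. Both ultimately rest on the same homological input — $H_1(M(0);\mathbb{Z})\cong\mathbb{Z}$ and the fact that all $\hat{T}_i$ represent the generator of $H_2(M(0);\mathbb{Z})\cong\mathbb{Z}$ — and your transfer back to $M$ via wedges of the filling solid torus is a clean version of what the paper does. But there is a gap at the decisive step.

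The claim that the lifts $\{\tilde{T}_i^{(k)}\}$ ``inherit a linear order'', equivalently that $\alpha$ can be represented by a circle-valued Morse function $f\colon M(0)\to S^1$ with every $\hat{T}_i$ an entire regular fiber, is not a free general-position fact about disjoint homologous surfaces: it is logically equivalent to the conclusion of the lemma. If some complementary region $R$ had three (or more) of the $\hat{T}_i$ in its frontier, the dual tree of the separating lifts in $\tilde{M}(0)$ would branch, no linear order would exist, and $f|_R$ could not land in a single component of $S^1\setminus\{\theta_1,\dots,\theta_n\}$ while hitting three distinct $\theta_i$ on $\partial R$. ``Incompressibility and pairwise disjointness'' do not exclude this. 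What is missing is the step that actually pins down the dual graph $\Gamma$ (vertices $=$ regions, edges $=$ tori): the collapse map $M(0)\to\Gamma$ is surjective on $\pi_1$, so $H_1(M(0);\mathbb{Z})\cong\mathbb{Z}$ surjecting onto $H_1(\Gamma;\mathbb{Z})$ forces $b_1(\Gamma)\le 1$; and each $\hat{T}_i$ is nonseparating in $M(0)$ (it represents a nonzero class in $H_2$), so no edge of $\Gamma$ is a bridge, forcing $b_1(\Gamma)\ge 1$. A connected unicyclic graph with no bridges is exactly a cycle $C_n$. Only once this is established does your Morse function exist, the lifts become linearly ordered, and the quotient count produce $n$ regions each with two boundary tori. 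You do flag the issue as ``the main obstacle,'' but you attribute it to a fixable Morse-theoretic technicality rather than to the fact that this is precisely where the content of the lemma lives.
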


\begin{proof}
The once-punctured tori $T_i$ are disjoint, and loops around the punctures $\partial T_i$ are parallel slopes on the boundary torus of $M$. Drawing a path around the meridian of the boundary torus, we see that the complementary regions connect in a circle, where each complementary region is bounded by two once-punctured tori on both sides. Notice that since $M$ is a knot complement in $S^3$, one cannot go from one region between two once-punctured tori to another without crossing a once-punctured torus. Thus, each complementary region is bounded by exactly two once-punctured tori, and the number of complementary regions is equal to the number of once-punctured tori. Note also that the number of both JSJ tori complementary regions of $M(0)$ are also equal to $n$, since the JSJ complementary regions are obtained from the complementary regions of $\overline{M \setminus \bigcup_{i=1}^n T_i}$ by gluing a solid cylinder. This also implies that each JSJ complementary region is bounded by exactly two tori.
\end{proof}

\begin{remark}
The purpose of this lemma is to establish that the JSJ decomposition of $M(0)$ satisfies the conditions of Theorem \ref{thm:compatible}, and will hence be $SL_2(\mathbb{C})$-compatible. In addition, the fact that the number of JSJ complementary regions is equal to the number of JSJ tori will prove useful in a dimension count featured in the proof of Theorem \ref{thm:main}. In fact, this particular structure of the JSJ decomposition of $M(0)$ will be crucial in establishing that each JSJ component has a two-dimensional character variety. Thus, we must assume that $M$ is a hyperbolic knot complement. 
\end{remark}

We will need the following result on JSJ decompositions for the proof of the theorem.

\begin{theorem}[\cite{Aschenbrenner20123ManifoldG}]\label{thm:JSJ}
Let $N$ be a compact irreducible orientable 3-manifold with JSJ decomposition $\{T_i\}$. Then:
\begin{enumerate}
	\item Each complementary region of $\overline{N \setminus \bigcup\{T_i\}}$ is atoroidal or Seifert-fibered. 
	\item If $T_i$ cobounds Seifert fibered connected components $N_j, N_k$ of the complementary region, then their regular fibers do not match.
	\item If $T^2 \times I$ is a complementary region, then $N$ is a torus bundle.
\end{enumerate}
\end{theorem}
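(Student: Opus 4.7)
The plan is to establish each of the three parts in order, leveraging the standard construction of the JSJ decomposition via characteristic Seifert-fibered submanifolds in the sense of Jaco-Shalen and Johannson, together with the minimality and non-parallelism requirements built into the definition of a JSJ decomposition.

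For part (1), the key tool is the characteristic submanifold theorem. Inside $N$ there is a canonical (up to ambient isotopy) maximal Seifert-fibered submanifold $W(N)$ whose frontier consists of essential annuli and tori, embedded properly in $N$. The JSJ tori $\{T_i\}$ are precisely the torus components of this frontier. Each complementary region of $\overline{N \setminus \bigcup T_i}$ is then either a component of $W(N)$, which is Seifert-fibered by construction, or a component of $N \setminus W(N)$. For a complementary piece $P$ of the second type, atoroidality follows from the maximality of $W(N)$: any essential torus $T \subset P$ has a regular neighborhood $T \times I$ that is itself Seifert-fibered, so it could be absorbed into $W(N)$ to enlarge it, contradicting maximality. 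I would invoke the Jaco-Shalen-Johannson construction as the technical engine rather than redeveloping the characteristic submanifold from first principles.

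For part (2), I would argue by contradiction from minimality. Suppose $T_i$ cobounds Seifert-fibered pieces $N_j$, $N_k$ whose regular fibers coincide on $T_i$ after identification via the gluing. Then the Seifert fibrations on $N_j$ and on $N_k$ extend across $T_i$ to a genuine Seifert fibration on the union $N_j \cup_{T_i} N_k$. Consequently, removing $T_i$ from the collection $\{T_i\}$ leaves a decomposition in which each complementary region is still either atoroidal or Seifert-fibered — the merged region is Seifert-fibered and the others are unchanged. This contradicts the minimality clause in the definition of JSJ decomposition.

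For part (3), suppose a complementary region $R$ is homeomorphic to $T^2 \times I$, bounded by JSJ tori $T_a$ and $T_b$. Since $R$ realizes an isotopy between $T_a$ and $T_b$ inside $N$, these tori are parallel in $N$. The non-parallelism requirement on $\{T_i\}$ forces $T_a$ and $T_b$ to be the same torus $T$ in the collection, which means that both sides of $T$ in $N$ are glued to $R$. Thus $N$ is obtained from $T^2 \times I$ by identifying its two boundary components via some homeomorphism $\varphi: T^2 \to T^2$, realizing $N$ as a torus bundle over $S^1$ with monodromy $\varphi$. To complete the argument one checks that no further JSJ tori are present: any essential torus in $N$ lifts (after cutting along $T$) to an essential torus in $T^2 \times I$, and all such tori are boundary-parallel, so they would be parallel to $T$, contradicting the non-parallelism condition.

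The main obstacle is part (1), since it rests on the full strength of the Jaco-Shalen-Johannson characteristic submanifold theorem — a substantial piece of 3-manifold theory that I would cite rather than reprove. The delicate bookkeeping in part (3) — forcing $T_a = T_b$ and ruling out extra tori — is conceptually clean but must be phrased carefully so that the non-parallelism convention is used correctly; different sources use slightly different conventions on whether the JSJ family is allowed to contain parallel copies, so the precise statement depends on that convention being fixed at the outset.
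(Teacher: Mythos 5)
There is no in-paper proof to compare against: Theorem \ref{thm:JSJ} is imported from \cite{Aschenbrenner20123ManifoldG} without argument, and with the paper's own definition of a JSJ decomposition (a \emph{minimal} collection of disjoint incompressible tori whose complementary pieces are atoroidal and hyperbolic or Seifert fibered) part (1) holds by definition, so only (2) and (3) carry content. Your sketch is essentially the standard proof and is correct in outline: for (2), matching regular fibers let the two Seifert fibrations be isotoped to agree on $T_i$ and glue to a fibration of $N_j \cup_{T_i} N_k$, so deleting $T_i$ produces a smaller admissible collection, contradicting minimality. Two points should be tightened. For (1), the characteristic-submanifold machinery is unnecessary here, and the assertion that the JSJ tori are \emph{precisely} the torus components of the frontier of $W(N)$ is not literally true in all exceptional cases (product collars and twisted $I$-bundle over the Klein bottle phenomena), so either cite the torus-decomposition statement directly or simply observe that (1) is built into the definition used in this paper. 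For (3), the paper's definition has no separate non-parallelism clause, only minimality, so you should derive what you need rather than assume it: if $T_a \neq T_b$ cobound the product region $R \cong T^2 \times I$, delete $T_b$; the complementary region $Q$ on its other side (which is not $R$, since $R$ has only the two boundary tori already accounted for) absorbs $R$ as a collar, so $R \cup_{T_b} Q \cong Q$ remains atoroidal or Seifert fibered and minimality is violated. Hence $T_a = T_b = T$, and then $R \cup T$ is a closed torus bundle that is open and closed in $N$, so $N = R \cup T$ and no further JSJ tori can exist, which replaces your boundary-parallel-tori check. Finally, note the statement implicitly assumes the $T^2 \times I$ region is bounded by decomposition tori rather than by $\partial N$ (the degenerate case $N = T^2 \times I$ with empty collection is excluded); in the paper's application $N = M(0)$ is closed, so this is harmless.
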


We also slightly modify a result of Tillmann in \cite{Tillmann2003VarietiesAT} in order to cover non-connected essential surfaces.

\begin{lemma}[\cite{Tillmann2003VarietiesAT}]\label{lma:tillmann}
Let $S$ be an essential surface in an orientable irreducible 3-manifold $M$ with complementary regions $M_1, \dots, M_n$. Let $x_\infty$ be an ideal point on a curve inside the character variety $X(M)$, and the limiting character $\chi_\infty$ at $x_\infty$. There is a natural restriction map 
\begin{equation}
	r: X(M) \to X(M_1) \times \dots \times X(M_n)
\end{equation}
Then $S$ is detected by $x_\infty$ if and only if the limiting character $(\chi_1, \dots, \chi_n) \in X(M_1) \times \dots \times X(M_n)$ has the following properties:
\begin{enumerate}
	\item $\chi_i$ is finite for each $i = 1, \dots, n$, i.e. the values of $\chi_i$ on group elements have no poles at $x_\infty$ when restricted to $X(M_i)$.
	\item For any proper essential subsurface $S'$ of $S$, there exists a complementary region of $\overline{M \setminus S'}$ on which the limiting character is not finite.
	\item $\chi_i$ match via gluing homeomorphisms on the boundaries of $M_1, \dots, M_n$.
	\item $\chi_i$ restricted to any boundary component of $M_1, \dots, M_n$ is reducible.
	\item There is a connected open neighborhood $U$ of $x_\infty$ in $X(M)$ such that $r(U)$ contains an open neighborhood of $(\chi_1, \dots, \chi_n)$ but not $(\chi_1, \dots, \chi_n)$ itself.
\end{enumerate}
\end{lemma}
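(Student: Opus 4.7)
The plan is to reduce the statement to Tillmann's original theorem for connected essential surfaces \cite{Tillmann2003VarietiesAT} and extend it componentwise, using the Bass-Serre tree produced by the Culler-Shalen construction at $x_\infty$ as the common organizing object.

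For the forward direction, assume $S$ is detected by $x_\infty$. The valuation $v_{\tilde{x}}$ produces an action of $\pi_1(M)$ on a Bass-Serre tree $T_{\tilde{x}}$ under which each complementary region $M_i$ has $\pi_1(M_i)$ contained in a vertex stabilizer. By Theorem 2.1.2 in \cite{Culler1983VarietiesOG}, $P_C(\pi_1(M_i))$ is $GL_2(F)$-conjugate into $SL_2(\mathcal{O}_{\tilde{x}})$, and reducing modulo $\mathfrak{m}_{\mathcal{O}_{\tilde{x}}}$ yields a finite character $\chi_i$, giving property 1. Property 3 follows because the boundary torus between $M_i$ and $M_j$ is sent into an edge stabilizer contained in both adjacent vertex stabilizers, so the two limits of boundary trace functions must agree. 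Property 4 holds because an edge stabilizer in $T_{\tilde{x}}$ fixes an edge, and the corresponding $SL_2$-subgroup is reducible. Property 2 is the minimality built into the Culler-Shalen construction, and property 5 expresses that $x_\infty$ is an ideal point of $\widetilde{C}$: the character $\chi_\infty$ is the limit along any sequence in $C$ approaching $x_\infty$ but is not itself realized by a character in $C$.

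For the reverse direction, given data $(\chi_1, \ldots, \chi_n)$ satisfying properties 1--5, the plan is to amalgamate the $\chi_i$ into a single character-theoretic object on $\pi_1(M)$. Properties 3 and 4 together (trace-matching plus reducibility on the boundary tori) are exactly what is needed to glue the $\chi_i$ across the dual graph of $S$ via a graph-of-groups construction; this produces a well-defined character $\chi_\infty$ on $\pi_1(M)$. Property 5 then ensures that $\chi_\infty$ arises as a limit of characters in $C$ without being a point of $C$, so it corresponds to an ideal point $x_\infty \in \widetilde{C}$. The associated Bass-Serre tree action, by construction, has each $\pi_1(M_i)$ inside a vertex stabilizer and each boundary torus subgroup inside an edge stabilizer, so the detected essential surface is contained in $S$. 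Property 2 rules out proper subsurfaces, forcing the detected surface to be $S$ itself.

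The main obstacle is handling disconnected $S$ in the reverse direction. Tillmann's original argument builds a single Bass-Serre tree from a connected essential surface; for disconnected $S$ one must verify that the same valuation $v_{\tilde{x}}$ simultaneously governs the limits on every connected piece, and that the tree action collapses correctly onto the full dual graph of $S$ rather than onto the dual graph of only a single component. I expect this is resolved precisely by condition 5: the $\chi_i$ are not arbitrary characters on the pieces but are all coherent restrictions of the same global limit character along a single path in $C$ approaching $x_\infty$, which forces the valuation structures on each component to match. Verifying this coherence carefully is the one genuinely new ingredient beyond directly citing \cite{Tillmann2003VarietiesAT}.
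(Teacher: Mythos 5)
The paper does not supply a proof of this lemma at all: it is stated as a citation of \cite{Tillmann2003VarietiesAT}, with condition 2 added to handle disconnected surfaces, and the remark that follows merely records that Tillmann alluded to condition 2 (via his condition S3) as the needed fix for the multi-component case. So there is no internal proof in this paper to compare your argument against; the lemma is being treated as an essentially known result.

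On the merits of your attempt: the forward direction is broadly sound, since properties 1, 3, and 4 do follow from the Bass--Serre action produced by $v_{\tilde x}$ (vertex stabilizers, edge stabilizers, and reducibility on edge stabilizers, as in \cite{Culler1983VarietiesOG}), and property 5 records that $(\chi_1,\dots,\chi_n)$ is not realized by a point of $C$. But two places are soft. First, attributing property 2 to ``minimality built into the Culler--Shalen construction'' is not justified: the construction does not automatically yield a canonical minimal surface, and one must actually argue that if the limiting character were finite on every complementary region of a proper essential subsurface $S' \subsetneq S$, then $S'$ would already be dual to the same tree, contradicting the choice of $S$. Second, and more seriously, the reverse direction misidentifies the task. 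You propose to ``amalgamate'' the $\chi_i$ via a graph-of-groups construction to build a character on $\pi_1(M)$ --- but that character is already given, as the limit along $C$ at $x_\infty$. What must actually be shown is that the Bass--Serre tree action at $x_\infty$ has dual surface isotopic to $S$ rather than to a proper essential subsurface, and this is exactly where condition 2 must enter in a nontrivial way. Your sketch asserts that ``Property 2 rules out proper subsurfaces'' without explaining how condition 2 interacts with the tree argument; that step is the single genuinely new ingredient beyond Tillmann's connected-surface theorem, and it is precisely what your proposal leaves unaddressed.
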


\begin{remark}
It was alluded to in \cite{Tillmann2003VarietiesAT} that Conditions 1, 3, 4, and 5 were sufficient if $S$ was connected, but not if $S$ has multiple components, since the condition S3 for surface detection would not be satisfied; it is also stated that Condition 2 rectifies this discrepancy. 
\end{remark}

%
%

\section{Proof of main theorem}\label{sec:proof}


For the convenience of the reader, we recall the theorem statement here.

\main*

\begin{proof}[Proof of Theorem \ref{thm:main}]
Suppose we are in the situation in the theorem statement. Let $[\beta] \in \pi_1(M)$ be the canonical longitude of the torus boundary, i.e. the boundary slope of the once-punctured tori. We use the following observations / notations, coming from Lemma \ref{lma:verticesedges}. 
\begin{enumerate}
	\item Let $\{H_i\}_{i=1}^n$ be the components of $\overline{M \setminus \bigcup_{i=1}^nT_i}$, and let $\{\mathcal{O}_i\}_{i=1}^n$ be the JSJ components of $M(0)$. Note that the number of complementary regions is equal to the number of JSJ tori, and both are equal to $n$.
	\item The JSJ component $\mathcal{O}_i$ is bounded by the tori $\widehat{T}_{i_1}, \widehat{T}_{i_2}$. 
	\item $T_{i_j} \subset \partial H_i$ are the incompressible punctured tori which cap off to $\widehat{T}_{i_j}$, and the commutators of $\pi_1(T_{i_1})$ and $\pi_1(T_{i_2})$ are equal in $\pi_1(M)$ (they are represented by the curve $\beta$)
	\item In the JSJ decomposition of $M(0)$, let $\widehat{T}_{i_1}$ be glued to $\widehat{T}_{(i-1)_2}$ and $\widehat{T}_{i_2}$ be glued to $\widehat{T}_{(i-1)_1}$. (Here $i \in \mathbb{Z}/n\mathbb{Z}$.) 
\end{enumerate} 
	
\begin{figure}[h]\label{fig:complementaryregionsK}
	\centering
	\includegraphics[scale=.25]{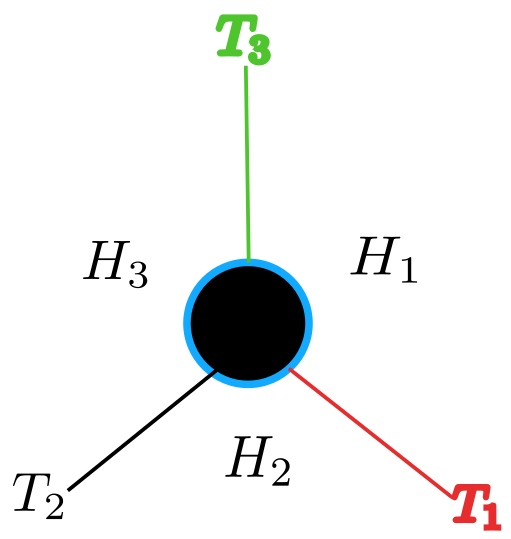}
	\caption{A schematic of how the once-punctured tori and complementary regions are arranged in the knot complement.}
\end{figure}
	
The picture is as follows: Each $\mathcal{O}_i$ is Seifert-fibered or hyperbolic. We also have that $\partial H_i = T_i \cup A_i \cup T_{i+1}$ is a closed genus 2 surface, which can be viewed as two once-punctured tori glued together via an annulus. The 3-manifold $\mathcal{O}_i$ is obtained by capping off $H_i$ with an annulus (representing a portion of the glued-in torus when perfoming 0-surgery on $M$).
	
\begin{figure}[h]
	\centering
	\includegraphics[scale=.3]{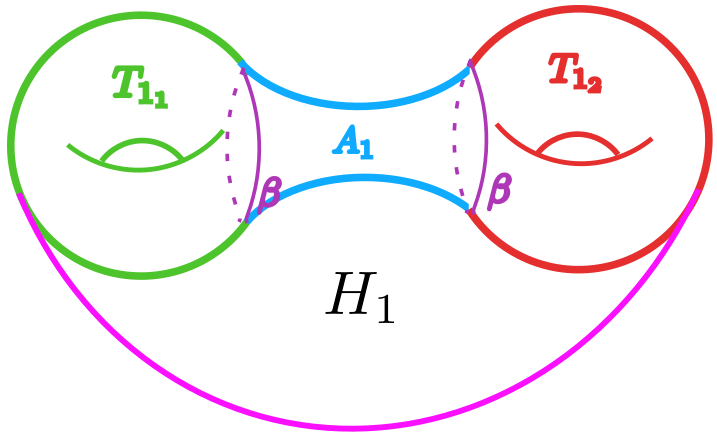}
	\caption{Rough drawing of what each complementary region looks like. Colors correspond to the previous figure.}
\end{figure}
	
If some $\mathcal{O}_i$ is Seifert-fibered over a 2-orbifold $S_i$, then $S_i$ must have two circle boundary components. By Theorem 13.3.6 in \cite{Thurston1979TheGA}, if $S_i$ is non-hyperbolic it must be the annulus, which is ruled out by assumption. So we can assume that all the JSJ components $\mathcal{O}_i$ are either hyperbolic or Seifert-fibered over a hyperbolic 2-orbifold. 
	
\medskip
	
We now examine the dimensions of $X(H_i)$ and $X(\mathcal{O}_i)$. Recall that $\partial H_i$ is a closed genus 2 surface. By Theorem 5.6 in \cite{Thurston1979TheGA}, $\dim_{\mathbb{C}}(X(H_i)) \geq -\frac{3}{2}\chi(\partial H_i) = 3$. Recall that $\mathcal{O}_i$ is a 3-manifold with two torus boundary components. By the same theorem, we know that $\dim_{\mathbb{C}}(X(\mathcal{O}_i)) \geq 2$. 
	
\medskip
	
We have the inclusions $\pi_1(H_i) \hookrightarrow \pi_1(M)$, inducing the restriction maps 
\begin{equation}
	r_i: X(M) \to X(H_i)
\end{equation}
Take the product of all these maps to get 
\begin{equation}
	r: X(M) \to X\left(\bigcup H_i\right) = \prod_{i=1}^nX(H_i) = \mathcal{X}
\end{equation}
Notice that $\mathcal{X}$ is an affine variety of dimension at least $3n$. We will now show that $\text{Im}(r)$ is a one-dimensional subvariety in $\mathcal{X}$. Let $V'$ be the algebraic set inside $\mathcal{X}$ defined by the following equations:
\begin{enumerate}
	\item Each $H_i$ has a coordinate associated with the mutual commutator of the generators of $\pi_1(T_{i_1})$ and $\pi_1(T_{i_2})$. This commutator lifts to $\beta \in \pi_1(M)$, so all the commutators must have the same trace in order to be in the image of $r$. Thus, there are $n - 1$ equations that equate the trace polynomials of these commutators.
	\item There are $n$ gluings between $T_{i_2}$ and $T_{(i+1)_1}$, with $i \in \mathbb{Z}/n\mathbb{Z}$. There are three equations per gluing here. If $\pi_1(T_{i_2}) = \langle a_2, b_2 \rangle$, $\pi_1(T_{(i+1)_1}) = \langle a_1, b_2 \rangle$, and the gluing map is $\varphi: \pi_1(T_{i_2}) \to \pi_1(T_{(i+1)_1})$, then the three equations are
	\begin{equation}
		\text{tr}(a_2) = \text{tr}(\varphi(a_2)) \ \ \ \ \ \text{tr}(b_2) = \text{tr}(\varphi(b_2)) \ \ \ \ \ \text{tr}(a_2b_2) = \text{tr}(\varphi(a_2b_2))
	\end{equation}
	However, only two of these equations are necessary. Let $x = \text{tr}(a_2)$ and $x' = \text{tr}(\varphi(a_2))$, with similar notation for $y, y'$ for $b_2$ and $z, z'$ for $a_2b_2$. Without loss of generality, suppose that the first two equations apply. Write them as $x = x'$ and $y = y'$. By the $n - 1$ equations that equate the commutators of all the trace functions, we also have that 
	\begin{equation}
		x^2 + y^2 + z^2 - xyz - 2 = x'^2 + y'^2 + z'^2 - x'y'z' - 2
	\end{equation}
	By the other two equations, we see that 
	\begin{equation}
		z^2 - xyz = z'^2 - xyz' \Longrightarrow (z^2 - z'^2) - (xyz - xyz') = (z - z')(z + z' - xy) = 0
	\end{equation}
	Thus, the algebraic set $V'$ includes $2n$ equations, which are of the form $x = x'$ and $y = y'$. 
\end{enumerate}
Let $V$ be the subvariety of $V'$ which takes the $z = z'$ components of the equations of the second type, as described above. Notice also that $\text{Im}(r) \subset V$, since any element in the image or $r$ must satisfy the gluing equations. Let $X^{irr}(M)$ be the Zariski open subset consisting of traces of irreducible representations in $X(M)$, and let $V^{irr}$ be the Zariski open subset of $V$ defined by $\text{tr}(\beta) \neq 2$. Note that $V^{irr} \subset \text{Im}(X^{irr}(M))$, since the traces on $X(H_i)$ restrict to matching irreducible traces on the once-punctured tori in their boundary, so the lifts of the conjugacy classes of irreducible representations on these once-punctured tori can be glued, giving an irreducible trace on $X(M)$. (See, for instance, Lemma 6 in \cite{Paoluzzi2010ConwaySA}.) Since $V$ is a component of an algebraic set defined by $3n - 1$ equations inside $\mathcal{X}$, $\dim(V) \geq 1$, and since by assumption $X^{irr}(M)$ is one-dimensional, $\text{dim}(V^{irr}) \leq 1$. If the dimension of $V^{irr}$ is zero, then $V^{irr} \subset V$ is a Zariski open subset such that $\dim(V^{irr}) < \dim(V)$, meaning that $V^{irr}$ is empty; this is a contradiction since by assumption, there exist representations with $\text{tr}(\beta) \neq 2$. So $\dim(V) = \dim(V^{irr}) = 1$. This also means that $\dim(\mathcal{X}) = 3n$, and so $\dim(X(H_i)) = 3$ for all $i$. The subvariety $X(\mathcal{O}_i) \subset X(H_i)$ is defined by setting the trace of $\beta \in \pi_1(\partial H_i)$ equal to 2, but by assumption, the trace of $\beta$ is nonconstant on $X(M)$, and hence nonconstant in $X(H_i)$ as well. Therefore, $\dim(\mathcal{O}_i) = 2$.
	
\medskip
	
By Lemma \ref{lma:twodim}, all $\mathcal{O}_i$ must be either cable spaces or hyperbolic. By Theorem \ref{thm:compatible}, $\{\widehat{T}_i\}_{i=1}^n$ is an $SL_2(\mathbb{C})$-compatible JSJ decomposition, so there exist representations $\rho_i: \pi_1(\mathcal{O}_i) \to SL_2(\mathbb{C})$ so that the cusp group elements have matching traces. In fact, by the proof of Theorem \ref{thm:compatible}, the $\rho_i$ can be chosen as lifts of holonomy representations of either $\mathcal{O}_i$ or their underlying 2-orbifolds $S_i = A^2(p_i)$. There are quotient maps $q_i: \pi_1(H_i) \to \pi_1(\mathcal{O}_i)$ that kill the loop corresponding to the punctures on the incompressible once-punctured tori embedded in the boundary of $H_i$; in other words, $q_i$ sends a representative of the core of $A_i$ to the identity. Let $\chi_i' = \text{tr}(\rho_i \circ q_i)$, and let $\chi_0 = (\chi_1', \dots, \chi_n')$. 

\medskip
	
Note that $V$ contains $\chi_0$, since we chose the coordinates of $\chi_0$ to be $SL_2(\mathbb{C})$-compatible characters, but $\chi_0 \notin V^{irr}$. If $\chi_0$ was isolated, $\{\chi_0\}$ would be a zero-dimensional component of $V$, which contradicts the fact that $V$ was defined with $3n - 1$ equations in a $3n$-dimensional variety. In addition, since the coordinate functions on $\chi_0$ are irreducible on each $X(H_i)$, $\chi_0$ cannot be surrounded by restrictions of reducible representations of $\pi_1(M)$. These observations combined with the fact that $V^{irr}$ is Zariski-open in $V$ shows that we may pick a sequence of points $\{\chi_j\}_{j=1}^\infty \subset V^{irr} \subset \text{Im}(r)$ approaching $\chi_0$. Let $\{\alpha_j\} \in X(M)$ be such that $r(\alpha_j) = \chi_j$. Suppose for contradiction that up to subsequence, $\{\alpha_j\}$ converges to a character $\alpha_\infty \in X(M)$. Then $\alpha_\infty$ must be the trace of some $\rho_\infty: \pi_1(M) \to SL_2\mathbb{C}$. Since $r(\alpha_\infty) = \chi_0$, it follows that for all $i$, $\rho_\infty|_{\pi_1(H_i)}$ is conjugate to $\rho_i \circ q_i$, since the trace of $\rho_i \circ q_i$ is $\chi_i'$. In particular, this means that for all $i \in \mathbb{Z}/n\mathbb{Z}$, $ \rho_i|_{\pi_1(\widehat{T}_{i_2})} \circ q_i|_{\pi_1(T_{i_2})}$ and $\rho_{i+1}|_{\pi_1(\widehat{T}_{(i+1)_1})} \circ q_i|_{\pi_1(T_{(i+1)_1})}$ are conjugate representations of the fundamental group of the once-punctured torus, which is the free group on two generators. We are left with the following three cases: $T_{i_2}$ and $T_{(i+1)_1}$ glue to form the incompressible once-punctured torus $T_k \subset M$ that caps off to $\widehat{T}_k \subset M(0)$ such that:
\begin{enumerate}
	\item $\widehat{T}_k$ bounds a Seifert-fibered and a Seifert-fibered space. Let $\pi_1(\widehat{T}_{i_1}) = \langle m_1, \ell_1 \rangle$ and $\pi_1(\widehat{T}_{i_2}) = \langle m_2, \ell_2 \rangle$ be the copies of $\widehat{T}_j$ in the two Seifert-fibered spaces with holonomies $\rho_1', \rho_2'$. Here $\ell_k$ is the regular fiber. Then up to conjugacy,
	\begin{equation}
		\rho_k'(m_k) = \pm\begin{pmatrix}1&1\\0&1\end{pmatrix}\ \ \ \ \ \pm\rho_k'(\ell_k) = \begin{pmatrix}1&0\\0&1\end{pmatrix}
	\end{equation} 
	for $k = 1, 2$, and some canonical choice of longitude $\ell_k$ and a choice of sign determined by $SL_2(\mathbb{C})$-compatibility. By Theorem \ref{thm:JSJ}, the fibers of the Seifert fibrations do not match, i.e. if $\varphi: \langle m_1, \ell_1 \rangle \to \langle m_2, \ell_2 \rangle$ is the gluing map, $\varphi(\ell_1) \neq \varphi(\ell_2)$. Thus, $\varphi^{-1}(\ell_2)$ is a nontrivial simple closed curve in $\pi_1(\widehat{T}_{i_1})$ that is not $\ell_1$; this means it must be of the form $m_1^p\ell_1^q$ with $p \neq 0$. However, 
	\begin{equation}
		\rho_1'(m_1^p\ell_1^q) = \pm \begin{pmatrix}1&p\\0&1\end{pmatrix} \neq \pm \begin{pmatrix}1&0\\0&1\end{pmatrix}
	\end{equation}
	which means that the two representations are not conjugate under the gluing map $\varphi$. This contradicts the earlier assertion. 
	\item $\widehat{T}_k$ bounds a Seifert-fibered and a hyperbolic space. The hyperbolic space will have a holonomy representation $\rho_h$ that restricts to the torus cusp $\langle m_h, \ell_h \rangle$ as follows:
	\begin{equation}
		\rho_h(m) = \pm\begin{pmatrix}1&1\\0&1\end{pmatrix} \ \ \ \ \ \rho_h(\ell_h) = \pm\begin{pmatrix}1&\tau\\0&1\end{pmatrix}
	\end{equation}
	where $\text{Im}(\tau) > 0$, and the choice of sign is determined by $SL_2(\mathbb{C})$-compatibility. We call $\tau$ the \emph{cusp shape} of the representation $\rho_h$. As seen previously, the cusp shape for a Seifert-fibered manifold is 0, but it is non-zero for a hyperbolic manifold, so this leads to a contradiction.
	\item $\widehat{T}_k$ bounds a hyperbolic and a hyperbolic space. This is excluded by hypothesis. 
\end{enumerate}
In all cases, we get a contradiction. Thus, a subsequence of $\{\alpha_j\}$ converges to an ideal point $\alpha_\infty$ of $X(M)$. By construction, $\lim_{j\to\infty}\text{tr}(\alpha_j)|_{\pi_1(H_i)} = \chi_i'$ is the limiting character of the ideal point, and the component whose projectivization contains $\alpha_\infty$ must contain irreducible representations. By the assumptions, it follows that $\alpha_\infty$ is an ideal point on a norm curve $C$. Since the limiting characters $(\chi_1', \dots, \chi_n')$ satisfy the conditions in Lemma \ref{lma:tillmann}, $\alpha_\infty \in \widetilde{X(M)}$ detects $\bigcup T_i$.  Since the limiting character on each $H_i$ is given by the trace of $\rho_i \circ q: \pi_1(H_i) \to SL_2(\mathbb{C})$, which is irreducible on all components, by Proposition \ref{prop:limitingrep} $A_{k(C)}$ tautologically extends over $\alpha_\infty$, and so we are done. 
\end{proof}

\section{Consequences, examples, and conjectures}\label{sec:conjectures}


\subsection{Two-bridge knots}

\begin{question}\label{conj:jsjtori}
Let $M$ be a one-cusped hyperbolic 3-manifold with a family of $n$ mutually disjoint incompressible $p_i$-punctured JSJ tori $\{T_i\}_{i = 1}^n$ with slope $\beta$ such that the complementary components of $M(\beta)$ are not $T \times I$. When is $\bigcup_{i=1}^n\{T_i\}$ detected by an ideal point of the canonical component $X(M)$, and if so, when does $A_{k(C)}$ tautologically extend over that ideal point?
\end{question}

Question \ref{conj:jsjtori} is one generalization of Theorem \ref{thm:main}. From this perspective, we now analyze punctured JSJ tori in two-bridge knots. The JSJ structure of toroidal Dehn fillings of two-bridge knots are determined in Theorem 3.10 of \cite{Patton1995IncompressiblePT} and and Proposition 4.1 of \cite{Sekino}. We also include the cases of the twice-punctured tori in the figure-eight knot, which was stated in Section 7 of \cite{Tillmann2005DegenerationsOI}. By Corollary 1.2 in \cite{IchiharaMasai}, this is complete account of JSJ decompositions of toroidal Dehn fillings of two-bridge knots. 

\begin{theorem}[\cite{Patton1995IncompressiblePT}, \cite{Tillmann2005DegenerationsOI}, \cite{Sekino}]\label{thm:patton}
Let $M = S^3 \setminus K_{p/q}$ be a two-bridge knot complement associated to $p/q \in \mathbb{Q}$, and 
\begin{equation}
	p/q = r + \frac{1}{b_1 - \frac{1}{b_2}} \ \ \ \ \ r, b_1, b_2 \in \mathbb{Z}
\end{equation}
Then:
\begin{enumerate}
	\item If $|b_1|, |b_2| > 2$ are even then $M$ admits two once-punctured JSJ tori with slope 0, and the JSJ complementary regions of $M(0)$ are cable spaces.
	\item If $|b_1| = 2$ and $|b_2| > 2$ is even, then $M$ admits one once-punctured JSJ torus with slope 0, and the JSJ complementary region of $M(0)$ is a cable space.
	\item If $b_1 = -2, b_2 = 2$ then $M$ is the figure-eight knot complement, with one once-punctured JSJ torus with slope 0 such that the JSJ complementary region of $M(0)$ is $T^2 \times I$.
	\item The figure-eight knot complement also contains two twice-punctured JSJ tori with slopes $\pm 4$ such that the JSJ complementary region of $M(\pm 4)$ are the twisted $I$-bundle of the Klein bottle and the trefoil knot complement.
	\item If $b_1$ is odd and $|b_2| > 2$ is even, then $M$ admits two twice-punctured JSJ tori with slope $2b_2$ such that the JSJ complementary regions of $M(2b_2)$ are the twisted $I$-bundle of the Klein bottle, a $(2, 2m+1)$ torus knot complement for some $m \neq 0, -1$, and a cable space.
	\item If $b_1$ is odd and $|b_2| = 2$, then $M$ is the $m$-twist knot complement. In this case, $M$ admits one once-punctured JSJ torus with slope 0, and the JSJ complementary region of $M(0)$ is a cable space. 
	\item The $m$-twist knot complement also contains one twice-punctured JSJ torus with slope 4. The JSJ complementary regions of $M(4)$ are the twisted $I$-bundle of the Klein bottle and a $(2, 2m+1)$
\end{enumerate}
\end{theorem}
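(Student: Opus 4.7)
The plan is to reduce Theorem \ref{thm:patton} to the Hatcher--Thurston classification of incompressible surfaces in two-bridge knot complements, and then extract the JSJ data from the resulting branched-surface description together with the known structure of the relevant toroidal Dehn fillings. Because the statement is a compilation of results from \cite{Patton1995IncompressiblePT}, \cite{Tillmann2005DegenerationsOI}, \cite{Sekino}, and \cite{IchiharaMasai}, the proof is largely a bookkeeping argument: one enumerates the cases from the continued fraction expansion $p/q = r + 1/(b_1 - 1/b_2)$ and matches each to its source in the literature.

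First, I would invoke the Hatcher--Thurston theorem to enumerate all connected essential surfaces in $S^3\setminus K_{p/q}$ from the continued fraction data, extracting in particular the punctured tori and their boundary slopes. The even-denominator conditions on $b_1, b_2$ correspond exactly to the appearance of once-punctured tori with slope $0$ (cases (1), (2), (3), (6)), while the odd-denominator configurations yield the twice-punctured tori with slope $2b_2$ or $4$ (cases (4), (5), (7)). In each case, one reads off from the branched surface whether the surface is connected or decomposes into two isotopy classes.

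Second, for each essential punctured torus $T_i$, I would analyze the complementary region $H_i$ in the knot exterior, then cap off along the filling slope to obtain the JSJ complementary region $\mathcal{O}_i \subset M(\beta)$. The topological type of $\mathcal{O}_i$ can be determined by describing $H_i$ as a handlebody with a rational tangle attached, which after capping the punctures is either a cable space (when the two rational tangles on either side of the Seifert surface are each of the form $1/(2k)$), the twisted $I$-bundle over the Klein bottle (when the twice-punctured torus separates off a tangle contributing a $\mathbb{Z}/2$-quotient of $T^2 \times I$), a $(2, 2m+1)$ torus knot complement (from the rational tangle encoding $b_1$ odd), or $T^2 \times I$ (in the fibered figure-eight case). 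This part is carried out directly by Patton in \cite{Patton1995IncompressiblePT} for cases (1), (2), (6), by Tillmann in \cite{Tillmann2005DegenerationsOI} for case (4), and by Sekino in \cite{Sekino} for cases (5), (7), with case (3) classical.

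Third, I would invoke Corollary 1.2 of \cite{IchiharaMasai}, which asserts that the known list of toroidal Dehn fillings of two-bridge knot complements is complete. This guarantees exhaustivity of the seven cases and allows one to conclude the full statement by assembling the individual references. The main obstacle is case (5) and (7): identifying the precise Seifert invariants of the $(2,2m+1)$ torus knot factor and verifying the appearance of the twisted $I$-bundle over the Klein bottle. The Klein bottle factor arises because a twice-punctured torus whose two boundary slopes are identified by the $2b_2$-filling becomes, topologically, a Klein bottle in $M(2b_2)$, and its regular neighborhood is the twisted $I$-bundle; tracking the parameter $m$ of the torus knot companion requires reading the Dehn twist contributions of each $b_1$ rung of the continued fraction, which is the most delicate bookkeeping step.
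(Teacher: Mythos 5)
The paper does not actually prove Theorem \ref{thm:patton}; the result is entirely imported from the literature. The accompanying text attributes cases $(1)$, $(2)$, $(3)$, $(6)$ to Theorem~3.10 of Patton, cases $(5)$, $(7)$ to Proposition~4.1 of Sekino, case $(4)$ to Section~7 of Tillmann, and cites Corollary~1.2 of Ichihara--Masai for completeness. Your proposal matches that citation-driven structure and the attributions agree, so at the level of strategy you are doing exactly what the paper does.

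However, your heuristic justification of where the twisted $I$-bundle over the Klein bottle comes from is wrong, and it is worth naming because it betrays a real confusion about the topology. You write that the Klein bottle factor ``arises because a twice-punctured torus whose two boundary slopes are identified by the $2b_2$-filling becomes, topologically, a Klein bottle.'' That is not what Dehn filling does. A twice-punctured torus in the knot exterior is orientable; filling along its boundary slope caps each of the two boundary circles with a disjoint meridian disk of the filling solid torus, and the result is a closed orientable surface of genus one, i.e.\ a torus --- this is precisely the JSJ torus $\widehat{T}$, not a Klein bottle. The twisted $I$-bundle $K\,\tilde{\times}\,I$ is a complementary region of $M(\beta)$ with $\partial (K\,\tilde{\times}\,I)\cong T^2$ equal to that capped-off torus; the one-sided Klein bottle sits as a core inside it and, when it has a preimage in the knot exterior at all, that preimage is an essential once-punctured Klein bottle (a non-orientable spanning surface), not the orientable twice-punctured torus you are tracking. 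The twice-punctured torus and the Klein bottle are different surfaces bounding opposite sides of the same JSJ torus. Your overall bookkeeping reduction to Patton, Tillmann, Sekino, and Ichihara--Masai is fine, but this particular piece of intuition should be discarded, since following it would lead you to misidentify the capped-off surface and hence the JSJ decomposition in cases $(4)$, $(5)$, and $(7)$.
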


We will address the status of tautological extension of $A_{k(C)}$ in all of these cases, through either proof or conjecture. 

\begin{corollary}
The families of Seifert surfaces in cases 1, 2, and 6 of Theorem \ref{thm:patton} are detected by an ideal point of a norm curve $C$ of the character variety. In addition, $A_{k(C)}$ tautologically extends over this ideal point.
\end{corollary}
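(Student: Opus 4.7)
The plan is to deduce the corollary directly from Theorem \ref{thm:main} by verifying each of its three hypotheses in cases 1, 2, and 6 of Theorem \ref{thm:patton}. In all three cases, Theorem \ref{thm:patton} provides a system of disjoint non-parallel once-punctured JSJ tori of slope 0 (two of them in case 1, one in cases 2 and 6), so the topological setup of Theorem \ref{thm:main} is immediately in place. What remains is the three bulleted hypotheses concerning the character variety of $M$ and the JSJ geometry of $M(0)$.

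For the JSJ conditions, I would observe that in each of cases 1, 2, 6 the JSJ complementary regions of $M(0)$ are cable spaces, i.e. Seifert fibered spaces over $A^2(q)$ with $q \geq 2$. Since $A^2(q)$ has a genuine cone point, it is not the annulus itself, so the second hypothesis (``no JSJ complementary region of $M(0)$ is Seifert fibered over the annulus'') holds. The third hypothesis (``no JSJ torus in $M(0)$ bounds two hyperbolic components'') is then vacuous, because none of the complementary regions is hyperbolic.

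The remaining ingredient is the norm-curve hypothesis: every irreducible component of $X(M)$ containing an irreducible character must be a norm curve. This is where I would invoke \cite{Macasieb2009OnCV}, in which the character varieties of the knots $J(b_1,b_2)$ are computed explicitly; the relevant statement is that the non-abelian components are one-dimensional and the trace of a meridian (hence of any nontrivial peripheral element, using the fact that the longitude is a word in the meridian conjugates and that the complement is not Seifert fibered) is nonconstant on each such component. Since cases 1, 2, and 6 of Corollary \ref{cor:twobridge} are precisely the cases covered there, this finishes the verification of the hypotheses.

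With all three conditions in hand, Theorem \ref{thm:main} applies verbatim, producing an ideal point $x$ on a norm curve $\tilde C \subset \widetilde{X(M)}$ that detects the union of the Seifert surfaces and over which $A_{k(C)}$ tautologically extends. The only nontrivial obstacle in the argument is confirming that the character-variety analysis of \cite{Macasieb2009OnCV} really does yield the norm-curve property in each of the three subcases; the JSJ conditions are immediate once cable spaces are identified in Theorem \ref{thm:patton}.
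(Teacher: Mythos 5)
Your overall strategy matches the paper's: verify the three hypotheses of Theorem \ref{thm:main} for cases 1, 2, 6 of Theorem \ref{thm:patton} and apply the theorem. Your treatment of the two JSJ hypotheses is correct — cable spaces fiber over $A^2(q)$ with $q \geq 2$, not over the annulus itself, and with no hyperbolic pieces the third bullet is vacuous.

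The gap is in the norm-curve hypothesis. You assert that nonconstancy of the meridian trace on a component implies nonconstancy of every peripheral trace ``using the fact that the longitude is a word in the meridian conjugates and that the complement is not Seifert fibered.'' That deduction is not valid: the longitude lies in the commutator subgroup, so it is indeed a product of conjugates of the meridian, but a product of elements with nonconstant trace can easily have constant trace. Concretely, on a component of $X(M)$ the trace of $\lambda$ could be identically $2$ (the longitude acting parabolically or trivially) even while $\text{tr}(m)$ varies, and ruling this out requires an argument; ``not Seifert fibered'' is not one. You do flag this as ``the only nontrivial obstacle,'' but then treat it as resolved by \cite{Macasieb2009OnCV} alone. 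The paper does not do this: it cites Theorem 1.2 of \cite{Macasieb2009OnCV} \emph{together with} Theorem 7.3 of \cite{Petersen2014GonalityAG} to conclude that all components containing irreducible characters are norm curves. So the missing ingredient is a citation or argument that actually establishes nonconstancy of the longitude (and hence of all peripheral slopes) on every such component, not just the meridian; \cite{Macasieb2009OnCV} by itself gives the structure of the variety but does not immediately give the norm-curve property you need.
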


\begin{proof}
Theorem 1.2 in \cite{Macasieb2009OnCV} combined with Theorem 7.3 in \cite{Petersen2014GonalityAG} implies that all components of the character varieties of the two-bridge knots in Theorem \ref{thm:patton} containing traces of irreducible components are norm curves. Then Cases 1, 2, and 6 of Theorem \ref{thm:patton} combined with Theorem \ref{thm:main} implies the conclusion. 
\end{proof}

For an explicit example, we analyze the punctured JSJ tori in the figure-eight knot, which encompass cases 3 and 4 in Theorem \ref{thm:patton}. Since the figure-eight knot is fibered, it is known that the Seifert surface in case 3 is not detected by an ideal point of a norm curve. This is a case excluded in Theorem \ref{thm:main}, since the JSJ complementary region of $M(0)$ is $T^2 \times I$. 

\medskip

One can see using the $A$-polynomial \cite{Cooper1994PlaneCA} that the twice-punctured JSJ tori in case 4 of Theorem \ref{thm:main} are each detected by an ideal point of the canonical component of the figure-eight knot. The behavior of the limiting character in this case was computed in \cite{Tillmann2005DegenerationsOI}. Let $M$ be the figure-eight knot complement.  Let $S \subset M$ be the essential twice-punctured torus with slope $(\pm4, 1)$. 

\newpage

\begin{figure}[h]
	\centering
	\includegraphics[scale=.5]{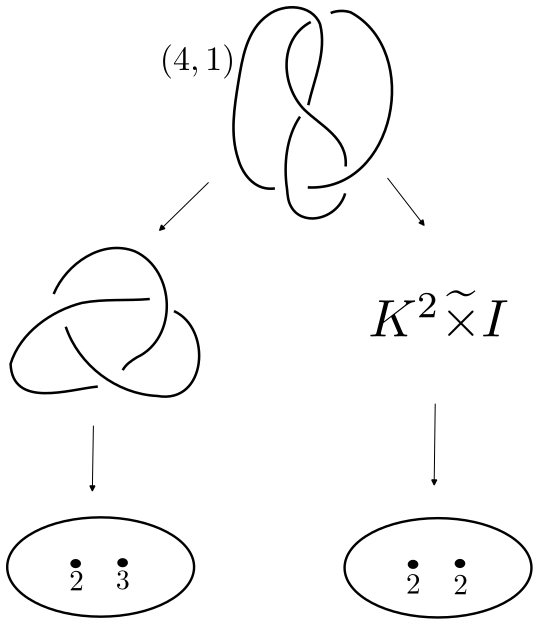}
	\caption{The splitting of $(4, 1)$ surgery on the figure-eight knot, where the Seifert fiberings are shown. The limiting character reflects this information.}
\end{figure}

The fundamental group of the trefoil knot complement $M' \subset M(\pm 4, 1)$ is given by 
\begin{equation}
	\pi_1(M') = \langle u, v \mid u^3 = v^2 \rangle
\end{equation}
The boundary torus of $M'$ is generated by the meridian $u^{-1}v$ and longitude $u^3$. Note that $M'$ is Seifert fibered over the hyperbolic 2-orbifold $D^2(2, 3) = S^2(2, 3, \infty)$.

\medskip

The fundamental group of $K \tilde{\times} I \subset M(\pm 4, 1)$, the nontrivial $I$-bundle over the Klein bottle, is given by 
\begin{equation}
	\pi_1(K \tilde{\times} I) = \langle s, t \mid s^2t^2 = 1 \rangle
\end{equation}
The boundary torus is generated by the meridian $st$ and the longitude $s^2$. We note that $K \tilde{\times} I$ fibers over a Euclidean 2-orbifold $D(2, 2)$, and $K \tilde{\times} I$ itself has a Euclidean structure. So there is no hyperbolic holonomy representation into $PSL_2(\mathbb{C})$ to speak of, as we will observe in the behavior of the limiting character on $\pi_1(K \tilde{\times} I) \subset \pi_1(M(4, 1))$. 

\medskip

The representation of $\pi_1(M')$ corresponding to the holonomy of $D(2, 3)$ is given by 
\begin{equation}
	\rho_1(u) = \begin{pmatrix}1&-1\\1&0\end{pmatrix} \ \ \ \ \ \rho_1(v) = \begin{pmatrix}0&-1\\1&0\end{pmatrix}
\end{equation}
On the boundary torus, this restricts to 
\begin{equation}
	\rho_1(u^{-1}v) = \begin{pmatrix}1&0\\1&1\end{pmatrix} \ \ \ \ \ \rho_1(u^3) =  \begin{pmatrix}-1&0\\0&-1\end{pmatrix}
\end{equation}
In \cite{Tillmann2005DegenerationsOI}, it is computed that the limiting character restricts to the trace of $\rho_1$ on $\pi_1(M')$. Meanwhile, on $K \tilde{\times} I$, the limiting character approaches the trace of 
\begin{equation}
	\rho_\infty(s) = \begin{pmatrix}0&1\\-1&0\end{pmatrix} \ \ \ \ \ \rho_\infty(t) = \begin{pmatrix}0&-1\\1&0\end{pmatrix}
\end{equation}
So on the boundary torus, the limiting character tends to the trace of
\begin{equation}
	\rho_\infty(st) = \begin{pmatrix}1&0\\0&1\end{pmatrix} \ \ \ \ \ \rho_\infty(s^2) = \begin{pmatrix}-1&0\\0&-1\end{pmatrix}
\end{equation}
The fact that $\chi_\infty|_{\pi_1(M')}$ is the trace of an irreducible holonomy representation gives us

\begin{corollary}\label{cor:figureeight}
$A_{k(C)}$ tautologically extends over both ideal points of $X(M)$. 
\end{corollary}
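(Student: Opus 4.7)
The plan is to invoke Proposition \ref{prop:limitingrep} directly, once I identify a JSJ complementary region of $M(\pm 4, 1)$ on which the limiting character is the trace of an irreducible representation. The preceding discussion (via the $A$-polynomial computation and Tillmann's work) has already established that each twice-punctured JSJ torus with slope $(\pm 4, 1)$ is detected by an ideal point of the canonical component $X_0(M)$, and that $X_0(M)$ is a norm curve because $M$ is a hyperbolic knot complement. So the setup of Proposition \ref{prop:limitingrep} is in place; what remains is to exhibit the irreducible limiting representation on a complementary region.

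For that, I would take $M' \subset M(\pm 4, 1)$ to be the trefoil knot complement summand (rather than the twisted $I$-bundle $K \tilde{\times} I$). The explicit formulas recalled just above show that $\chi_\infty|_{\pi_1(M')}$ equals the trace of $\rho_1$, the lift of the holonomy of the hyperbolic $2$-orbifold $D(2,3)$ composed with the Seifert fibration $\pi_1(M') \to \pi_1(D(2,3))$. Note that this is the right choice because on $K \tilde{\times} I$ the limiting representation $\rho_\infty$ has image in a Euclidean subgroup and in particular is reducible (its restriction to the boundary is even trivial up to sign), so that side of the decomposition would not yield a tautological extension via Proposition \ref{prop:limitingrep}.

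The remaining step is to check that $\rho_1$ is irreducible. This follows because $\rho_1$ is a faithful discrete representation of $\pi_1(D(2,3))$ into $PSL_2(\mathbb{R}) \subset PSL_2(\mathbb{C})$, hence has no global fixed point on $\partial \mathbb{H}^2$, so its image has no common eigenvector. A direct verification is also quick: $\rho_1(u)$ has eigenvectors determined by $\lambda^2 - \lambda + 1 = 0$, while $\rho_1(v)$ has eigenvectors for $\lambda = \pm i$, and these two eigenlines are distinct, so by the standard criterion (Lemma 1.2.4 in Maclachlan--Reid) $\rho_1$ is irreducible.

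With irreducibility of $\rho_1$ in hand, Proposition \ref{prop:limitingrep} applies to each of the two ideal points of $X_0(M)$ corresponding to slopes $(\pm 4, 1)$, giving the tautological extension of $A_{k(C)}$ over both. There is no genuine obstacle here; the hardest part is purely bookkeeping, namely being careful to pick the trefoil complementary region (not $K \tilde{\times} I$) to access an irreducible limiting character, which is the same phenomenon underlying Theorem \ref{thm:main} but applied to a case with a different (twice-punctured, not once-punctured) surface.
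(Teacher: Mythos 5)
Your proposal is correct and follows essentially the same route as the paper: both single out the trefoil-complement side of the JSJ decomposition of $M(\pm 4,1)$, identify the limiting character there with the trace of the lift $\rho_1$ of the holonomy of the hyperbolic base orbifold of the Seifert fibration, note its irreducibility, and then invoke Proposition \ref{prop:limitingrep}. The only difference is presentational: you spell out the irreducibility of $\rho_1$ (via non-shared eigenlines and the Maclachlan--Reid criterion), which the paper takes for granted since $\rho_1$ is a lift of a faithful discrete holonomy of a hyperbolic $2$-orbifold.
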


\begin{remark}
The tautological extension comes from the limiting character on the trefoil knot complement, which is a JSJ complementary region in $M(\pm 4, 1)$. In particular, the extension comes from the holonomy structure on the underlying 2-orbifold of the Seifert fibration on the trefoil, as is often the case in applications of Theorem \ref{thm:main}. However, the limiting character on $\pi_1(K \tilde{\times} I)$ is the character of a reducible representation, and does not contribute to the tautological extension. This does not occur in the situation of Theorem \ref{thm:main}. It would be interesting to study the behavior of limiting characters on submanifolds that are Seifert-fibered over non-hyperbolic 2-orbifolds, such as the twisted $I$-bundle over the Klein bottle. It is known that many JSJ decompositions on exceptional surgeries (e.g. twist knots) feature this space as a complementary component. This study will be partially carried out in \cite{newpaper}. 
\end{remark}

The remaining cases of Theorem \ref{thm:patton} are 5 and 7. Since the torus knot complement involved in the JSJ decomposition of the toroidal Dehn fillings capping off the twice-pnctured tori is Seifert-fibered over a hyperbolic 2-orbifold, we find the following conjecture to be reasonable.

\begin{conjecture}
The twice-punctured tori in cases 5 and 6 of Theorem \ref{thm:patton} are detected by an ideal point of a norm curve in the character variety, and $A_{k(C)}$ tautologically extends over this ideal point.
\end{conjecture}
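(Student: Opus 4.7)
The plan is to mirror the strategy of Theorem \ref{thm:main}, replacing once-punctured slope-$0$ tori by twice-punctured tori of slope $2b_2$ (case 5) or $4$ (case 7). By Theorem \ref{thm:patton}, each JSJ component $\mathcal{O}_i$ of the relevant toroidal Dehn filling is one of three types: a twisted $I$-bundle $K \widetilde{\times} I$ over the Klein bottle, a $(2, 2m+1)$-torus knot complement, or (in case 5) a cable space. The torus knot complements and cable spaces are Seifert-fibered over hyperbolic $2$-orbifolds, so their hyperbolic holonomies pull back via the fiber projection to irreducible representations of $\pi_1(\mathcal{O}_i)$; the Klein-bottle piece is instead Seifert-fibered over the Euclidean orbifold $D(2,2)$ and produces only reducible peripheral characters, so it plays the role of a \emph{defective} piece whose limiting character cannot by itself supply the desired irreducible extension.

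The first step is to redo the dimension count from the proof of Theorem \ref{thm:main} with the new Euler characteristics $\chi(\partial H_i)$ coming from the twice-punctured topology, using Thurston's lower bound $\dim_{\mathbb{C}} X(H_i) \geq -\tfrac{3}{2}\chi(\partial H_i)$ together with the norm curve conclusion of \cite{Macasieb2009OnCV} combined with \cite{Petersen2014GonalityAG}. Running the restriction map $r: X(M) \to \prod_i X(H_i)$ into the subvariety $V$ cut out by the gluing and commutator-matching equations should again force $\dim X(\mathcal{O}_i) = 2$ and, through Lemma \ref{lma:twodim}, recover the structural classification already predicted by Theorem \ref{thm:patton}. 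The second step is to extend Theorem \ref{thm:compatible} to cover the Klein-bottle piece. Lemmas \ref{lma:hyperbolicgoodgluing} and \ref{lma:cablespacegoodgluing} already handle the torus knot complement and cable space sides; for $K \widetilde{\times} I$ one computes the kernel of $H_1(\partial(K \widetilde{\times} I); \mathbb{Z}/2) \to H_1(K \widetilde{\times} I; \mathbb{Z}/2)$ directly via half-lives-half-dies and verifies, using the Euclidean holonomy of $D(2,2)$, that the peripheral trace constraints are loose enough to glue to the adjacent hyperbolic-orbifold pieces. The explicit figure-eight computation reproduced in the paper, in which $\rho_\infty(st) = I$ and $\rho_\infty(s^2) = -I$ on $K \widetilde{\times} I$ and glues to the trefoil's peripheral trace data, should serve as a model for this hybrid compatibility.

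With a hybrid compatibility in hand, I would form the compatible character $\chi_0 = (\chi_1', \dots, \chi_n')$ and approach it by irreducible characters $\alpha_j \in X^{\mathrm{irr}}(M)$ as in Theorem \ref{thm:main}, then rule out convergence to an honest character of $\pi_1(M)$ along a subsequence. For a gluing wall adjacent to two Seifert-fibered-over-hyperbolic-$2$-orbifold pieces the cusp-shape obstruction used in the main theorem transplants directly; for a wall adjacent to $K \widetilde{\times} I$, one instead invokes Theorem \ref{thm:JSJ} to argue that the regular fibers of $K \widetilde{\times} I$ and of the torus knot complement fail to match on the interface, forcing a non-conjugacy of peripheral representations by the matrix computation used in case 1 of Theorem \ref{thm:main}. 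The surviving ideal point $\alpha_\infty$ then detects $\bigcup T_i$ by Lemma \ref{lma:tillmann}, and the restriction of its limiting character to the torus knot complement is the trace of the lifted hyperbolic-$2$-orbifold holonomy, which is irreducible; Proposition \ref{prop:limitingrep} then delivers tautological extension.

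The principal obstacle is the hybrid $SL_2(\mathbb{C})$-compatibility near walls adjacent to $K \widetilde{\times} I$. In the setting of Theorem \ref{thm:main} every JSJ piece contributes an irreducible holonomy from a hyperbolic $2$-orbifold, producing a clean half-lives-half-dies bookkeeping of which peripheral elements are rigidly forced to trace $\pm 2$; here one piece is Euclidean and contributes only reducible peripheral traces that nevertheless must match rigidly to the neighboring irreducible pieces, and the combinatorics of the slope-$2b_2$ (or slope-$4$) gluing must be threaded through this constraint consistently. A secondary difficulty is ensuring the non-convergence argument for $\{\alpha_j\}$ still succeeds despite one piece being reducible — this is exactly where the regular-fiber mismatch from Theorem \ref{thm:JSJ} is essential, and where care is needed to verify that the Klein-bottle piece does not admit a spurious deformation that permits $\{\alpha_j\}$ to converge to a genuine character of $\pi_1(M)$ rather than to an ideal point.
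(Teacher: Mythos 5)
This statement is a \emph{conjecture} in the paper, not a theorem. The paper offers no proof; immediately after stating it, the text says it ``will be proven in upcoming work \cite{newpaper}.'' So there is no paper argument to compare your proposal against, and your submission, while an honest sketch, does not supply a proof either — it flags its two central difficulties as unresolved ``obstacles.'' Since the conjecture remains open in the paper, the right assessment is of whether your sketch correctly identifies what is missing, and here it partly does but partly underestimates the gap.

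The real obstruction is combinatorial, not just the Euclidean-versus-hyperbolic character of the $K \widetilde{\times} I$ piece. In cases 5 and 7 of Theorem \ref{thm:patton} the JSJ complementary regions of the filled manifold form a \emph{chain}, not a circle: in case 7 the single JSJ torus separates $K \widetilde{\times} I$ from the torus knot complement (two pieces, each with one torus boundary), and in case 5 the chain is $K \widetilde{\times} I$ — cable space — torus knot complement, with the two endpoint pieces having a single torus boundary component each. This directly violates the hypotheses of both Lemma \ref{lma:verticesedges} and Theorem \ref{thm:compatible}, which require every piece to have exactly two torus boundary components and the pieces to be arranged cyclically. Your proposal treats the problem as ``extend Theorem \ref{thm:compatible} to allow one Euclidean piece,'' but the whole bookkeeping behind that theorem — matching signs around a cycle, the Type 1/Type 3 classification by how $\ker(i_*)$ sits across two boundary tori — is premised on the two-boundary-torus, circular structure. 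With endpoint pieces having one boundary torus, the ``half lives, half dies'' kernel is one-dimensional rather than two-dimensional, and the Type analysis does not make sense. A genuinely new compatibility argument is needed, not a patch.

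Related to this, your step one does not work as stated. You propose to re-run the restriction-map dimension count and ``recover the structural classification already predicted by Theorem \ref{thm:patton},'' but the classification there includes the Euclidean piece $K \widetilde{\times} I$, which Lemma \ref{lma:twodim} (a statement only about hyperbolic $2$-orbifolds) cannot produce. In Theorem \ref{thm:main} the dimension count was used to \emph{force} the complementary regions to be cable spaces or hyperbolic; here the complementary regions are already known and include an excluded type, so the logic must be rearranged — the dimension count can at best be used to constrain $X(\mathcal{O}_i)$, not to classify $\mathcal{O}_i$. Finally, your non-convergence argument invokes Theorem \ref{thm:JSJ}'s regular-fiber mismatch at the $K \widetilde{\times} I$ wall, which is the right instinct (it is what the figure-eight computation in the paper suggests), but the paper gives no proof that this obstruction survives the passage from the explicit figure-eight case to the parametric families in cases 5 and 7. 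In short, your sketch correctly identifies the strategy and the flavor of the obstacles, but it does not resolve the structural mismatch with the existing lemmas, which is precisely why the paper left this as a conjecture.
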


This conjecture, which generalizes Corollary \ref{cor:figureeight}, will be proven in upcoming work \cite{newpaper}. 

\subsection{The roots of unity phenomenon}

Recall the ``roots of unity phenomenon" first proven in \cite{Cooper1994PlaneCA} and elaborated upon in \cite{Dunfield2008AP1}:

\begin{theorem}[\cite{Cooper1994PlaneCA}]
	Suppose $S$ is an essential surface in $M$ with nonempty boundary detected by the ideal point $x$. Let $b = [\partial S] \in \pi_1(\partial M)$ be the fundamental group element associated to the boundary of $S$, which is an element of the peripheral torus. Then the limiting character at $b$ approaches $\lambda + \lambda^{-1}$ at $x$, and all other peripheral elements approach infinity. Here $\lambda = \xi_n$, a primitive $n$th root of unity. In addition, $n$ divides the number of boundary components of any connected component of $S$.
\end{theorem}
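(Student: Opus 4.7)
The plan is to exploit the Bass-Serre theory setup already in place: the ideal point $x$ produces a valuation $v_{\tilde{x}}$ on $F = k(D)$, hence an action of $\pi_1(M)$ on the associated Bass-Serre tree $T$, with the detected surface $S$ arising as the preimage of edge midpoints under an equivariant map from $\widetilde{M}$. The strategy is to analyze the restricted action of $\pi_1(\partial M) \cong \mathbb{Z}^2$ on $T$ and read off the residue behavior of $P_C$ on peripheral elements.

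First I would argue that $\pi_1(\partial M)$ cannot fix a vertex of $T$: if it did, the boundary torus would map into a single complementary component of $S$ under the equivariant map, contradicting that $\partial S$ is nonempty. Since $\mathbb{Z}^2$ is abelian and acts without inversions, standard structure theory of abelian actions on trees forces $\pi_1(\partial M)$ to leave a line $L \subset T$ invariant. Elements translating nontrivially along $L$ act as hyperbolic isometries, so by the valuation-theoretic interpretation of translation length their matrices cannot be conjugated into $SL_2(\mathcal{O}_{v_{\tilde{x}}})$, and their traces must have poles at $x$. The elements that fix $L$ pointwise form a cyclic subgroup, and I would identify this subgroup with $\langle b \rangle$ by matching the edges of $L$ that pull back under the equivariant map to boundary components of $S$.

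Next, to pin down the residue behavior of $P_C(b)$, note that edge stabilizers in the Bass-Serre tree for $SL_2(F)$ correspond, after conjugation, to an Iwahori subgroup whose reduction modulo $\mathfrak{m}_{v_{\tilde{x}}}$ lies in the upper-triangular Borel of $SL_2(k_v)$. Since $b$ fixes every edge of $L$, its residue image is upper-triangular with diagonal entries $\lambda, \lambda^{-1}$, so $\text{tr}(P_C(b))$ specializes to $\lambda + \lambda^{-1}$ at $x$. To see that $\lambda$ is a root of unity, I would invoke a commuting peripheral companion $m \in \pi_1(\partial M)$ that translates $L$ by some integer $k$: because $b$ and $m$ commute, conjugation by $P_C(m)$ intertwines the Iwahori attached to one edge with that of the translated edge, so the diagonal characters $\bar{b}$ reads off at consecutive edges of $L$ are permuted cyclically over a period. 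Chasing this cycle yields a relation $\lambda^n = 1$ where $n$ encodes the combinatorics of that period.

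The final claim that $n$ divides the number of boundary components of any connected component of $S$ should then follow by counting, within a fixed component of $S$, the number of edges of $L$ lying over a single boundary circle; cyclicity of the meridional action forces this count to be a multiple of $n$. The main obstacle will be the root-of-unity step, where I need to make the ``Galois-like descent'' along $L$ precise enough to conclude that $\lambda$ is genuinely of finite order (rather than just algebraic of bounded degree) and to identify $n$ with a primitive order intrinsically in terms of the tree combinatorics; once that descent is in hand, the divisibility of boundary-component counts is a matter of bookkeeping.
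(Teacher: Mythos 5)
The paper does not prove this statement; it is quoted verbatim (as Theorem 5.4, essentially) from Cooper--Culler--Gillet--Long--Shalen \cite{Cooper1994PlaneCA}, with a pointer to Dunfield \cite{Dunfield2008AP1} for elaboration. So there is no in-paper proof to compare against, and the question is whether your argument stands on its own.

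The first half of your sketch is correct and is the standard Culler--Shalen bookkeeping: $\pi_1(\partial M)$ cannot fix a vertex (otherwise $\partial S$ would be empty); an abelian group acting on a tree without inversions and without a global fixed point preserves a line $L$; an element with nonzero translation length along $L$ has eigenvalues of nonzero valuation and hence a pole trace; and the pointwise stabilizer of $L$ inside $\pi_1(\partial M)\cong\mathbb{Z}^2$ is a rank-one subgroup containing the primitive slope $b$, hence equal to $\langle b\rangle$ (so ``all other peripheral elements'' really means those outside $\langle b\rangle$).

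The root-of-unity step, however, does not work as you describe --- and that is the actual content of the theorem. Once you simultaneously diagonalize the commuting pair $b$ and a translating companion $m$, the axis $L$ becomes the standard apartment $\{[\pi^{i}\mathcal{O}\oplus\mathcal{O}]\,:\,i\in\mathbb{Z}\}$, and the residue of $b=\mathrm{diag}(\lambda,\lambda^{-1})$ at \emph{every} vertex of $L$ is the same unordered pair $\{\bar{\lambda},\bar{\lambda}^{-1}\}$: the conjugating elements $\mathrm{diag}(\pi^{-i},1)$ commute with $b$. There is no cyclic permutation of ``diagonal characters'' along $L$, so no relation of the form $\lambda^{n}=1$ falls out of the Iwahori-chasing you propose; the tree action alone only sees the conjugacy class of $\bar{b}$, which is the same at each vertex. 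The finiteness of the order of $\bar{\lambda}$ is a genuinely global statement that requires the surface $S$, not merely the peripheral action on $T$. The mechanism in \cite{Cooper1994PlaneCA} runs, roughly, as follows: the limiting representation on a connected component $S_{0}$ of $S$ is reducible (this is the ``limiting character is reducible on $S$'' fact quoted earlier in the paper from \cite{Daverman2002HandbookOG}), so its eigenvalue function is a multiplicative character on $H_{1}(S_{0};\mathbb{Z})$; the oriented boundary classes of $S_{0}$ are parallel copies of $b$ and each is sent to $\bar{\lambda}$; and the homological relation $\sum_{i}[\partial_{i}S_{0}]=0$ in $H_{1}(S_{0};\mathbb{Z})$ then forces $\bar{\lambda}^{k}=1$, where $k$ is the number of boundary components of $S_{0}$. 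That is exactly the step you flagged as an ``obstacle,'' and it cannot be closed by the argument you propose; it needs the surface-homology input, not more tree combinatorics.
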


\begin{conjecture}\label{conj:main}
Let $M$ be a one-cusped hyperbolic 3-manifold, and let $x \in \widetilde{C}$ be an ideal point detecting the essential surface $S \subset M$ with limiting eigenvalue a primitive $n$th root of unity and boundary slope $\beta$. Let 
\begin{equation}
	\beta' = \begin{cases}n\beta & n \text{ odd} \\ \frac{n}{2}\beta & n \text{ even} \end{cases}
\end{equation} 
Let $S(\beta')$ be the surface $S$ but with cone points of order $n$ or $\frac{n}{2}$ (as prescribed above) instead of punctures. Suppose that $S(\beta')$ is a Euclidean orbifold, and there exists a connected component $M' \subset M(\beta') \setminus S(\beta')$ that is either hyperbolic or Seifert-fibered over a hyperbolic 3-manifold. Then the limiting character at $x$ restricted to $M'$ is equal to the trace of the holonomy representation of $M'$, and $A_{k(C)}$ tautologically extends over $x$.
\end{conjecture}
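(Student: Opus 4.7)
The plan is to generalize the machinery developed for Theorem \ref{thm:main} to the orbifold setting, with the ultimate goal of producing a sequence of characters converging to an ideal point whose limiting character on $M'$ matches the holonomy of $M'$. Once the limiting character is shown to equal the holonomy trace on $\pi_1(M')$, irreducibility is automatic (since $M'$ is hyperbolic or Seifert-fibered over a hyperbolic $2$-orbifold), so Proposition \ref{prop:limitingrep} delivers tautological extension for free. Thus the entire conjecture reduces to the identification of the limiting character on $M'$.

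First I would set up the orbifold analog of the Dehn filling picture. Capping off $S$ after $\beta'$-filling produces the Euclidean orbifold $S(\beta')$, with cone points of order $n$ (respectively $n/2$) reflecting precisely the root-of-unity data $\lambda = \xi_n$ of the limiting eigenvalue. Write $M(\beta') \setminus S(\beta') = M' \sqcup M''$ where $M''$ collects the remaining complementary regions. The hypothesis that $\lambda^n = 1$ (or $\lambda^{n/2} = -1$) is exactly what is needed so that the peripheral trace along the filled slope approaches $\pm 2$, which is the appropriate boundary condition for an orbifold holonomy representation on $S(\beta')$ to glue with a representation on $M'$.

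Next I would establish an orbifold version of the $SL_2(\mathbb{C})$-compatibility theorem (Theorem \ref{thm:compatible}), adapted so that the trace conditions on the torus cusps are replaced by the condition that cone-point generators lift with trace $2\cos(\pi k/n)$ for the appropriate $k$. The same \emph{half lives, half dies} framework of Lemma \ref{lma:cablespacegoodgluing} should suffice once one tracks how the $\mathbb{Z}/2\mathbb{Z}$-homology of $M(\beta')$ interacts with the orbifold structure on $S(\beta')$; the Euclidean assumption on $S(\beta')$ is crucial because its fundamental group is virtually abelian and its character variety is well-understood. In parallel I would perform the dimension count analogous to the one in the proof of Theorem \ref{thm:main}: the restriction map $r: X(M) \to \prod_i X(H_i)$ where the $H_i$ are the complementary regions of $M \setminus S$, combined with the gluing equations and the peripheral-trace equations, should still cut out a one-dimensional subvariety of $\prod_i X(H_i)$, forcing each $X(H_i)$ to have the minimal dimension predicted by the Thurston inequality, and pinning down the complementary components up to a short list of models.

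Finally I would run the contradiction argument of Theorem \ref{thm:main} verbatim: pick a sequence of irreducible characters in $X(M)$ whose restrictions to $\bigsqcup H_i$ converge to a character built from the holonomy of $M'$ (extended by any compatible choice on the other components via the $SL_2(\mathbb{C})$-compatibility step), and rule out convergence to an honest character of $\pi_1(M)$ by incompatibility of cusp shapes or Seifert invariants (via Theorem \ref{thm:JSJ}). Then Lemma \ref{lma:tillmann} identifies the resulting ideal point as one detecting $S$, the limiting character on $M'$ is by construction the holonomy trace, and Proposition \ref{prop:limitingrep} finishes the proof. The main obstacle I expect is the interaction with complementary regions of $M(\beta') \setminus S(\beta')$ that are neither hyperbolic nor Seifert-fibered over a hyperbolic orbifold, such as the twisted $I$-bundle over the Klein bottle appearing in case 4 of Theorem \ref{thm:patton}. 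On these pieces the limiting character is forced to be reducible, and one must verify both that a compatible choice of representation exists there and that this reducibility does not obstruct convergence to the target character; this likely requires an orbifold refinement of Tillmann's criterion together with a careful deformation-theoretic argument at the Euclidean singular locus of $S(\beta')$.
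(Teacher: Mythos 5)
The statement you have attempted is labelled Conjecture~\ref{conj:main} in the paper and carries \emph{no proof} there: the paper explicitly leaves it open, offers it as a heuristic extrapolation from the roots-of-unity phenomenon of \cite{Cooper1994PlaneCA} and from the $n=4$ Conway-sphere case already established in \cite{Paoluzzi2010ConwaySA}, and defers the concrete cases beyond $n=4$ to forthcoming work \cite{newpaper}. So there is no paper argument to compare against. Your write-up is a reasonable program that tracks the same heuristic the paper advertises --- generalize the Theorem~\ref{thm:main} machinery to the orbifold setting, identify the limiting character with an orbifold holonomy, then invoke Proposition~\ref{prop:limitingrep} --- but as a proof it has genuine gaps, and they are essentially the ones the paper itself marks as unresolved.

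Concretely: (i) there is a quantifier mismatch. The conjecture asserts a conclusion for an \emph{arbitrary} ideal point $x$ that is hypothesized to detect $S$, while your strategy \emph{constructs} one particular ideal point by gluing holonomy data and showing it cannot be an honest character. That establishes the existence of some ideal point with the desired limiting character, not the claim about the given $x$; you would need an additional argument that the limiting character is determined by the detected surface, or that your constructed ideal point coincides with $x$. (ii) The orbifold version of the compatibility theorem (Theorem~\ref{thm:compatible}) and of the ``half lives, half dies'' computation (Lemma~\ref{lma:cablespacegoodgluing}) is asserted but not carried out. Those arguments depend on the complementary pieces having torus boundary: one uses $H_1(\partial M_i; \mathbb{Z}/2)$ of tori to control the $PSL_2\to SL_2$ lifting. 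In the orbifold setting the boundary of $M'$ is a pillowcase or turnover, the lifting problem is controlled by orbifold $\mathbb{Z}/2$-homology, and the trace $-2$ peripheral condition is replaced by cone-point constraints $2\cos(\pi k/n)$; this is a new argument, not a routine substitution. (iii) You correctly identified the complementary pieces that are neither hyperbolic nor Seifert-fibered over a hyperbolic orbifold (the twisted $I$-bundle over the Klein bottle appears already at $n=4$ for two-bridge knots and the figure-eight) as the principal obstruction, but you do not resolve it; the paper itself defers exactly this point to \cite{newpaper}, and any complete proof must confront the reducibility of the limiting character on those pieces and verify that it does not obstruct the convergence argument or the hypotheses of Lemma~\ref{lma:tillmann}.
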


This conjecture is inspired by the roots of unity phenomenon, which implies that the limiting character at the ideal point descends to a character on $M'$. Stemming from the discussion in this paper, the natural candidate for the limiting character is the trace of the irreducible holonomy representation on $M'$, which would provide a basis for tautological extension over that ideal point.

\medskip

We now discuss one such situation where Conjecture \ref{conj:main} holds, with $n = 4$ and $M$ is a knot complement in $S^3$. In \cite{Paoluzzi2010ConwaySA}, it is shown that ideal points exist that detect systems of Conway spheres. We summarize these results here:

\begin{theorem}[\cite{Paoluzzi2010ConwaySA}]
Let $K$ be a hyperbolic knot complement in $S^3$ with a Bonahon-Siebenmann system of essential Conway spheres $\{C_i\}$. Let $\{M_j\}$ be the components of the complement of $\{C_i\}$ in $K$. Then the following hold:
\begin{enumerate}
	\item $\bigcup_{i} C_i$ is detected by an ideal point of $X(K)$. Let $\chi_\infty$ be the limiting character at $x$.
	\item Let $m \in \pi_1(K)$ be a meridian of $K$. Then $\chi_\infty = 0$, and in fact, the representation the limiting character comes from is conjugate to
	\begin{equation}
		\rho_\infty(m) = \pm\begin{pmatrix}i & * \\ 0 & -i\end{pmatrix}
	\end{equation}
	\item Each $\mathcal{O}_i$ is either hyperbolic or Seifert fibered over a hyperbolic 2-orbifold.
	\item $\chi_\infty$ corresponds to the trace of the hyperbolic holonomy representation of $\pi_1(\mathcal{O}_i)$, where $\mathcal{O}_i$ is an orbifold obtained from $M_i$ by endowing each knot component with ramification index 2. Note that $\mathcal{O}_i$ has cusps homeomorphic to the pillowcase orbifold.
\end{enumerate}
\end{theorem}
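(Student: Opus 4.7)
My plan is to adapt the strategy of Theorem \ref{thm:main} to the Conway-sphere setting by working in the orbifold $\mathcal{O}$ obtained from $S^3$ by labeling $K$ with ramification index $2$. Under the Bonahon-Siebenmann hypothesis each Conway sphere $C_i$ descends to an essential pillowcase 2-suborbifold, and together the $C_i$ form an orbifold JSJ decomposition of $\mathcal{O}$ into pieces $\mathcal{O}_i$. By orbifold geometrization applied to a Bonahon-Siebenmann system, each $\mathcal{O}_i$ is either a hyperbolic orbifold or Seifert-fibered over a hyperbolic 2-orbifold, which yields part (3). Each such piece carries a natural holonomy $\rho_i : \pi_1^{\mathrm{orb}}(\mathcal{O}_i) \to PSL_2(\mathbb{C})$ (either the discrete faithful hyperbolic holonomy or the pullback of the holonomy of the hyperbolic 2-orbifold base), and $\text{tr}(\rho_i)$ will be the candidate limiting character on $M_i$.

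Second, I would construct $SL_2(\mathbb{C})$-compatible lifts $\widehat{\rho}_i$ in the sense of the Conway-sphere analog of Theorem \ref{thm:compatible}. Because a meridian $m$ of $K$ satisfies $m^2 = 1$ in $\pi_1^{\mathrm{orb}}(\mathcal{O}_i)$, any lift satisfies $\widehat{\rho}_i(m)^2 = \pm I$, and an orbifold sign-change argument paralleling Lemmas \ref{lma:hyperbolicgoodgluing} and \ref{lma:cablespacegoodgluing} forces $\widehat{\rho}_i(m)^2 = -I$. Thus $\widehat{\rho}_i(m)$ has order $4$ and trace $0$, and on a pillowcase cusp the reducibility of the peripheral representation produces the normal form $\pm\begin{pmatrix} i & * \\ 0 & -i \end{pmatrix}$ of part (2). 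Tracking the remaining sign-change freedoms with the same $\mathbb{Z}/2\mathbb{Z}$-homological analysis used in the proof of Theorem \ref{thm:compatible} should yield lifts whose traces match on both sides of each pillowcase.

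Parts (1) and (4) would then follow from a dimension count identical in structure to the one in the proof of Theorem \ref{thm:main}. Let $r: X(K) \to \prod_i X(M_i)$ be the product of the restriction maps and let $V \subset \prod_i X(M_i)$ be the subvariety cut out by the pillowcase gluing equations. The compatible character $\chi_0 = (\text{tr}(\rho_1), \dots, \text{tr}(\rho_n))$ lies in $V$ but not in its Zariski open subset $V^{irr}$ of characters that glue to irreducible representations of $\pi_1(K)$; counting gluing equations against the dimensions of the $X(M_i)$ should give $\dim V = 1$. A sequence $\{\chi_j\} \subset V^{irr}$ approaching $\chi_0$ lifts to a sequence $\{\alpha_j\} \subset X(K)$ whose limit cannot be a character of $\pi_1(K)$ by the same three-case Seifert-fiber-versus-hyperbolic-cusp-shape mismatch argument used at the end of the proof of Theorem \ref{thm:main}: orbifold JSJ forces regular fibers on adjacent Seifert-fibered $\mathcal{O}_i$ not to match, and any hyperbolic $\mathcal{O}_i$ has non-real cusp shape at each pillowcase. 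The limit is therefore an ideal point $x$, which Lemma \ref{lma:tillmann} identifies as detecting $\bigcup_i C_i$, with limiting character $\text{tr}(\rho_i)$ on each $M_i$.

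The hard part will be porting the $SL_2(\mathbb{C})$-compatibility machinery of Section \ref{sec:lemmas} from torus to pillowcase boundaries. Pillowcase cusps have orbifold fundamental group generated by four order-$2$ meridians subject to a single product relation, which rigidifies the space of possible $SL_2(\mathbb{C})$ lifts and tightens the sign combinatorics considerably compared to the toroidal case; one must verify that the prescribed trace-$0$ normal form for the meridian is simultaneously realizable on all pillowcases in a way compatible with the Bonahon-Siebenmann gluing mapping classes. A secondary difficulty is replacing Theorem \ref{thm:JSJ} with its orbifold analog to justify the regular-fiber mismatch between adjacent Seifert-fibered pieces, which is precisely the step that rules out a global character of $\pi_1(K)$ as the limit of $\{\alpha_j\}$.
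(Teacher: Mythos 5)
This theorem is not proven in the paper at all: it is quoted verbatim from \cite{Paoluzzi2010ConwaySA} (the paper introduces it with ``We summarize these results here''), so there is no in-paper proof to compare your proposal against. That said, the paper's own Remark following the corollary states explicitly that ``The proof of Theorem \ref{thm:main} replicates the approach of \cite{Paoluzzi2010ConwaySA},'' and your proposal is precisely a reversal of that replication: you take the structure of the proof of Theorem \ref{thm:main} (orbifold geometrization of the decomposed filling, an $SL_2(\mathbb{C})$-compatibility step modeled on Theorem \ref{thm:compatible}, a dimension count on the gluing variety $V$, a non-convergence argument based on cusp-shape/fiber mismatch, and detection via Lemma \ref{lma:tillmann}) and port it back to the Conway-sphere setting. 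As far as the paper's account of the Paoluzzi--Porti argument lets one judge, this reconstruction is faithful to the original in outline.

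One small point worth tightening: for part (2) you do not need a sign-change argument to get trace $0$. Since $m$ has order $2$ in $\pi_1^{\mathrm{orb}}(\mathcal{O}_i)$ and the orbifold holonomy is faithful, $\rho_i(m)$ is a nontrivial involution in $PSL_2(\mathbb{C})$, and \emph{any} $SL_2(\mathbb{C})$ lift of a nontrivial $PSL_2$ involution has trace $0$ (its eigenvalues are either $\{1,-1\}$ or $\{i,-i\}$ depending on whether the lift squares to $I$ or $-I$, and both give trace $0$). The $\mathbb{Z}/2\mathbb{Z}$-homological sign bookkeeping is still needed, but only to arrange that the lifts on the two sides of each pillowcase agree, i.e.\ that they square to the same $\pm I$, not to obtain the trace-$0$ normal form itself. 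Your ``hard part'' paragraph correctly identifies that the pillowcase compatibility is the crux: the pillowcase group is rigid (four order-$2$ generators with one product relation), so the combinatorics of sign choices is more constrained than in the toroidal case, and this is exactly the content of Lemma 10 of \cite{Paoluzzi2010ConwaySA}, which Theorem \ref{thm:compatible} here is stated to be an analogue of.
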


Combining the above results with Proposition \ref{prop:limitingrep} yields

\begin{corollary}
Let $x$ be an ideal point of $X(K)$ that detects a system of Conway spheres. If $x$ lies on a norm curve $C$ of $X(K)$, then $A_{k(C)}$ tautologically extends over $x$.
\end{corollary}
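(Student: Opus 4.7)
The plan is to chain together the Paoluzzi--Porti theorem quoted immediately above with Proposition \ref{prop:limitingrep}: Paoluzzi--Porti provides a very concrete description of the limiting character $\chi_\infty$ at $x$ on each JSJ piece of the Conway sphere decomposition, and Proposition \ref{prop:limitingrep} then converts such a description into tautological extension of $A_{k(C)}$, provided the limiting character on at least one piece is the trace of an irreducible representation.

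First I would fix a single complementary region $M_i$ of the Conway sphere system $\bigcup_i C_i$ in $K$. Item (4) of the cited theorem identifies $\chi_\infty|_{\pi_1(M_i)}$ with the trace of the composition $\pi_1(M_i) \twoheadrightarrow \pi_1(\mathcal{O}_i) \xrightarrow{\rho_i} SL_2(\mathbb{C})$, where $\mathcal{O}_i$ is the orbifold obtained from $M_i$ by assigning ramification index $2$ to the knotted arcs and $\rho_i$ is (a lift of) the hyperbolic holonomy of $\mathcal{O}_i$. In particular, $\chi_\infty$ takes finite values on every element of $\pi_1(M_i)$, which is precisely the finiteness hypothesis required by Proposition \ref{prop:limitingrep}.

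Next I would verify irreducibility of $\rho_i$. Item (3) splits this into two cases. If $\mathcal{O}_i$ is hyperbolic, then $\rho_i$ is the discrete faithful hyperbolic holonomy and hence irreducible. If instead $\mathcal{O}_i$ is Seifert fibered over a hyperbolic 2-orbifold, then $\rho_i$ factors through a Fuchsian representation of the base 2-orbifold, which is irreducible because the base is hyperbolic. Since $\pi_1(M_i) \twoheadrightarrow \pi_1(\mathcal{O}_i)$ is surjective, the composed representation $\pi_1(M_i) \to SL_2(\mathbb{C})$ has the same image as $\rho_i$ and therefore remains irreducible.

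With these two ingredients in hand, Proposition \ref{prop:limitingrep} applied with $M' = M_i$ and the ideal point $x$ on the norm curve $C$ immediately yields tautological extension of $A_{k(C)}$ over $x$. There is no real mathematical obstacle here; the only interpretive care I would want to take is in reading Paoluzzi--Porti's ``corresponds to'' in item (4) as the strict equality of characters $\chi_\infty|_{\pi_1(M_i)} = \text{tr}(\rho_i \circ q_i)$, rather than a weaker statement up to conjugation or restriction. Item (2), in which the meridian of $K$ explicitly approaches trace $0$ (matching an order-$2$ elliptic, as expected from the ramification index), strongly supports this reading; if needed, I would recheck this point against the proof in \cite{Paoluzzi2010ConwaySA}.
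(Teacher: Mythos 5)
Your proposal is correct and takes essentially the same approach as the paper, which simply states that the corollary follows by combining the Paoluzzi--Porti theorem with Proposition \ref{prop:limitingrep}. You have correctly filled in the details: the limiting character on a complementary region is the trace of an irreducible representation (either a hyperbolic holonomy or a Fuchsian representation of a hyperbolic base orbifold, pulled back along the surjection $\pi_1(M_i) \twoheadrightarrow \pi_1(\mathcal{O}_i)$), which is exactly the hypothesis Proposition \ref{prop:limitingrep} needs.
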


\begin{remark}
The proof of Theorem \ref{thm:main} replicates the approach of \cite{Paoluzzi2010ConwaySA}. The ideas in \cite{Paoluzzi2010ConwaySA} deal with Bonahon-Siebenmann decompositions using pillowcase orbifolds resulting from $(2, 0)$-Dehn fillings on Conway spheres. In a parallel situation to that of \cite{Paoluzzi2010ConwaySA}, Theorem \ref{thm:main} deals with JSJ decompositions with essential tori resulting from Dehn filling on punctured tori, and the limiting eigenvalue was 1.
\end{remark}

In \cite{Dunfield2008AP1}, the author found examples of non-trivial roots of unity at ideal points, such as $m137$, which has limiting eigenvalue a sixth root of unity. Tillmann \cite{Tillmann2003VarietiesAT} also computed that the limiting character at this ideal point is irreducible. In light of these examples, we make the following conjecture, which is the $n = 6$ case of Conjecture \ref{conj:main}.

\begin{conjecture}
Let $M$ be a one-cusped hyperbolic 3-manifold such that $X(M)$ satisfies hypotheses of Theorem \ref{thm:main}, and let $\{P_i\}_{i=1}^n$ be a family of non-isotopic thrice-punctured spheres that cap off to a Bonahon-Siebenmann decomposition of $M(3, 0)$ consisting of $(3, 3, 3)$ turnovers. Then $\bigcup_{i=1}^nP_i$ is detected by an ideal point of a norm curve, and $A_{k(C)}$ extends over that ideal point.  
\end{conjecture}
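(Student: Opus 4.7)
My plan is to imitate the proof of Theorem \ref{thm:main}, replacing the once-punctured tori by thrice-punctured spheres and the $0$-filling by the orbifold filling $M(3,0)$ whose singular locus has cone angle $2\pi/3$ along the meridian. Let $\widehat{P_i}$ denote the $(3,3,3)$ turnover obtained by capping off $P_i$ in $M(3,0)$, and let $\mathcal{O}_1, \ldots, \mathcal{O}_n$ be the resulting orbifold complementary regions, which by the Bonahon-Siebenmann hypothesis are hyperbolic or Seifert fibered over a hyperbolic 2-orbifold. By \cite{Cooper1994PlaneCA}, the limiting eigenvalue of the meridian at any ideal point detecting $\bigcup P_i$ should be a primitive sixth root of unity, giving the target trace value $2\cos(\pi/3) = 1$; this is the value we track along sequences of characters in $X(M)$.

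First, as in the proof of Theorem \ref{thm:main}, I would set up the restriction map $r : X(M) \to \prod_{i=1}^{n} X(H_i)$, where the $H_i$ are the components of $\overline{M \setminus \bigcup P_i}$, and cut out the subvariety $V \subset \prod X(H_i)$ defined by the gluing-trace equations across each $P_i$, the equation forcing the meridian trace to equal $1$, and the longitudinal commutator compatibility on $\pi_1(\partial M)$. A dimension count parallel to the one in Theorem \ref{thm:main}, using the rigidity of the $(3,3,3)$ turnover (whose $SL_2(\mathbb{C})$ character variety is a finite set of points) together with the hypothesis that the irreducible locus of $X(M)$ is one-dimensional, should force each $X(\mathcal{O}_i)$ to have complex dimension exactly $2$. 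An orbifold analog of Lemma \ref{lma:twodim} for 2-orbifolds with two turnover boundary components would then classify each non-hyperbolic $\mathcal{O}_i$ as Seifert fibered over a disc with one cone point.

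The core technical step, and the expected main obstacle, is the analog of the $SL_2(\mathbb{C})$-compatibility result of Theorem \ref{thm:compatible} for turnover decompositions. Unlike the cable-space case, where the lifting ambiguity is governed by $\mathbb{Z}/2\mathbb{Z}$-homology, here one must simultaneously track the $\pm I$ sign ambiguity in the $PSL_2$-to-$SL_2$ lift and the cube-root-of-unity ambiguity in the lift of an order-$3$ boundary rotation. I would recast the problem as a $\mathbb{Z}/6\mathbb{Z}$-cohomological compatibility condition along the turnover boundaries of each $\mathcal{O}_i$ and apply an orbifold version of the half-lives-half-dies theorem to verify that compatible lifts exist. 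The rigidity of the $(3,3,3)$ turnover holonomy is helpful: once the lift of one order-$3$ boundary generator is fixed, the lifts of the remaining two are forced, so the gluing combinatorics reduce to a finite case analysis in the spirit of the Type 1 / Type 3 split in the proof of Theorem \ref{thm:compatible}.

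Once $SL_2(\mathbb{C})$-compatibility is established, the closing argument is a direct translation of the mechanism in Theorem \ref{thm:main}. The assembled character $\chi_0 \in V$ lies outside the irreducible locus $V^{\text{irr}}$, but by the same Zariski-density argument as in Theorem \ref{thm:main}, it is a limit of irreducible characters $\chi_j \in V^{\text{irr}}$ lifting to $\alpha_j \in X^{\text{irr}}(M)$. If $\alpha_j$ converged in $X(M)$ to some $\alpha_\infty = \text{tr}(\rho_\infty)$, then $\rho_\infty$ would have to conjugate two distinct rigid orbifold holonomies across a shared turnover, contradicting the non-conjugacy analysis at the end of Theorem \ref{thm:main} applied to the new rigidity setting. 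Hence $\alpha_\infty$ is an ideal point on a curve of irreducible characters, which is a norm curve $C$ by hypothesis. Tillmann's Lemma \ref{lma:tillmann} then yields detection of $\bigcup P_i$, and Proposition \ref{prop:limitingrep}, applied to any hyperbolic $\mathcal{O}_i$, gives tautological extension of $A_{k(C)}$ over $\alpha_\infty$.
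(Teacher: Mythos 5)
This statement is explicitly a \emph{conjecture} in the paper: the author offers no proof, only the heuristic that it is the $n=6$ case of Conjecture~\ref{conj:main}, motivated by Dunfield's example $m137$ and by Tillmann's computation that the limiting character there is irreducible. So there is no proof in the paper against which to compare your proposal.

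Your outline is a sensible program of attack and correctly isolates the main obstacle (the orbifold analog of $SL_2(\mathbb{C})$-compatibility), but it is not a proof, and the gaps you flag are precisely the genuine difficulties. Two in particular deserve emphasis. First, the ``half lives, half dies'' theorem used throughout Section~\ref{sec:lemmas} is a statement about $\mathbb{Z}/2\mathbb{Z}$-homology of compact oriented 3-manifolds with boundary; you propose replacing it with an unformulated orbifold version over $\mathbb{Z}/6\mathbb{Z}$-type lifting data, but such a statement is neither obvious nor clearly true, and the combinatorics of lifts of order-3 elements from $PSL_2(\mathbb{C})$ to $SL_2(\mathbb{C})$ (two choices of trace, $\pm 1$, for each order-3 generator) do not reduce to a single cohomology group in the way the $\pm 2$ trace ambiguity does. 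Second, and more seriously, the non-convergence step at the end of the proof of Theorem~\ref{thm:main} hinges on the fact that two representations of $\pi_1(T^2)$ can share a character yet be non-conjugate, because they are reducible and their distinction is carried by the cusp shape $\tau$. A $(3,3,3)$ turnover is rigid and its fundamental group is nonabelian; for an irreducible $SL_2(\mathbb{C})$ character, trace determines the conjugacy class, so the mechanism ``same trace, different conjugacy class'' does not transfer. You would need a genuinely different argument --- perhaps exploiting the reducible parabolic representations of the turnover group that arise at a cusp of the bordering hyperbolic orbifolds, or a dimension count showing the glued character cannot lie in $\mathrm{Im}(r)$ --- to rule out convergence to a character of $\pi_1(M)$. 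Until both of these points are addressed, the statement remains, as the paper says, a conjecture.
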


The manifold $m137$ is also notable because its canonical component contains no traces of reducible representations. The same is also true for knots with Alexander polynomial 1. In light of this, and the fact that $A_{k(C)}$ cannot tautologically extend over points corresponding to reducible traces, we conclude with the following question.

\begin{question}
Do there exist $M$ such that $A_{k(C)}$ extends tautologically over all of $X(M)$?
\end{question}

\subsection{Pretzel knots}

Punctured JSJ tori in alternating hyperbolic knots were classified in \cite{IchiharaMasai}. 

\begin{theorem}\cite{IchiharaMasai}
Let $M = S^3 \setminus K$ be a hyperbolic alternating knot complement. Suppose $M(r)$ is toroidal but not Seifert-fibered. Then $K$ is equivalent to one of:
\begin{enumerate}
	\item the figure-eight knot, with $r = 0, \pm 4$
	\item a two-bridge knot $K_{(b_1, b_2)}$ with $|b_1|, |b_2| > 2$, with $r = 0$ if $b_1, b_2$ are even and $r = 2b_2$ if $b_1$ is odd and $b_2$ is even
	\item a twist knot $K_{(2n, \pm 2)}$ with $|n| > 1$ and $r = 0, \pm 4$
	\item a pretzel knot $P(q_1, q_2, q_3)$ with $q_i \neq 0, \pm 1$ for $i = 1, 2, 3$ and $r = 0$ if $q_i$ are all odd, and $r = 2(q_2 + q_3)$ if $q_1$ is even and $q_2, q_3$ are odd
\end{enumerate}
\end{theorem}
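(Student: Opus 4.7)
The plan is to use the fact (Lemma 3 of \cite{Hatcher1985IncompressibleSI}) that any essential torus in $M(r)$ must come from a properly embedded essential punctured torus $T \subset M$ with boundary slope $r$. The problem thus reduces to classifying alternating hyperbolic knots admitting such essential punctured tori, together with determining which of the resulting toroidal Dehn fillings fail to be Seifert-fibered.

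First, I would invoke the structural theory of essential surfaces in alternating knot complements due to Menasco, Thistlethwaite, and Lackenby: after isotopy, any essential surface $T$ sits in a standard position relative to the checkerboard decomposition of an alternating diagram of $K$, so that the Euler characteristic $\chi(T)$ gives a uniform bound on the diagrammatic complexity (number of twist regions or tangle summands) of $K$. For a once-punctured torus ($\chi=-1$), this forces $K$ to be either a two-bridge knot (using the theorem of Crowell and Murasugi identifying the Seifert genus of an alternating knot with the genus of the Seifert-algorithm surface) or a three-tangle pretzel $P(q_1,q_2,q_3)$ with all $q_i$ odd; in both cases the boundary slope of the genus-one Seifert surface is $r=0$, matching cases (1), (2), (3), part of (4), and (6) of Theorem \ref{thm:patton}.

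Second, for a twice-punctured torus ($\chi=-2$), I would follow an analogous analysis: the surface either lives in a two-bridge complement with $b_1$ odd (so that the twice-punctured torus has two boundary components meeting the knot at the slope $r=2b_2$, as in Theorem \ref{thm:patton}), in a three-tangle pretzel with one even $q_i$ and the other two odd (producing slope $r=2(q_2+q_3)$), or in the figure-eight knot with slope $r=\pm 4$. The enumeration here uses an explicit normal-form argument for the essential surface in a checkerboard-minimal alternating diagram, with Euler-characteristic bookkeeping restricting the number of tangle components to at most three and forcing the surface to respect the tangle decomposition.

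Third, I would exclude toroidal fillings that are globally Seifert-fibered: using the JSJ structure tabulated in Theorem \ref{thm:patton}, the fillings in question decompose as graph manifolds with cable-space and (sometimes) hyperbolic pieces, and the only potentially Seifert-fibered fillings arise from torus-knot cables, which correspond to non-hyperbolic $K$ and are thus excluded by hypothesis. The main obstacle in this program is ruling out sporadic essential punctured tori outside the enumerated families, particularly among twice-punctured tori in alternating knots of moderate diagram complexity; this requires carefully controlling how such a surface can interact with each tangle of a prime alternating diagram, which is the principal technical contribution of \cite{IchiharaMasai}.
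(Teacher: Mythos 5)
This statement is quoted in the paper as a result of Ichihara--Masai (\cite{IchiharaMasai}); the paper provides no proof of it, only a citation, so there is no in-paper argument to compare your sketch against. That said, a few remarks on the sketch itself are in order.

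Your overall plan---reduce via Hatcher's lemma to classifying essential punctured tori in $M$ with boundary slope $r$, then use the Menasco--Thistlethwaite normal-form theory for essential surfaces in alternating link complements to bound diagram complexity in terms of $\chi$---is the right skeleton and matches the expected ingredients of \cite{IchiharaMasai}. However, two points need shoring up. First, you only treat the once-punctured ($\chi=-1$) and twice-punctured ($\chi=-2$) cases; nothing in your sketch rules out essential punctured tori with three or more boundary components of the relevant slope, and one needs to either cite a result bounding the number of boundary components of such a surface in an alternating knot complement or carry the Euler-characteristic bookkeeping far enough to exclude higher-puncture cases explicitly. Second, the final step excluding Seifert-fibered fillings is misstated: the hypothesis is that $M(r)$ is not Seifert-fibered, not that $K$ is hyperbolic (the latter is already assumed), so ``torus-knot cables corresponding to non-hyperbolic $K$'' is not the relevant dichotomy. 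What one actually needs is to observe that in each enumerated case the JSJ decomposition of $M(r)$ is nontrivial (e.g.\ a cable space glued to another piece along an incompressible torus with mismatched fibers, per the uniqueness properties of JSJ decompositions), which directly precludes $M(r)$ carrying a global Seifert fibration. With those two repairs, your sketch is a reasonable reconstruction of the route one would expect the cited paper to take.
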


Note that all possible punctured JSJ tori in the case of two-bridge knots have been addressed earlier in this section. In light of this, the next immediate generalization of Theorem \ref{thm:main} is for once and twice-punctured tori in pretzel knots. The following example should be highlighted in particular:

\begin{prop}[\cite{BaldwinSivek}]
Let $K_n$ be the $(-3, 3, 2n+1)$-pretzel knot, and $M_n$ its complement in $S^3$. Then $M_n$ admits a genus one Seifert surface which caps off to a JSJ torus in $M_n(0)$. The  complementary region of this JSJ torus in $M_n(0)$ is homeomorphic to the torus link $T_{2, 4}$, which is Seifert-fibered over the annulus.
\end{prop}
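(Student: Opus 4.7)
The plan is to construct the genus one Seifert surface directly from the pretzel diagram and then analyze the $0$-surgery topologically. For $K_n = P(-3, 3, 2n+1)$, consider the standard alternating pretzel projection with three tangles arranged vertically. The surface $S$ is built as a once-punctured torus by taking two disks (one for each ``cap'' of the pretzel diagram) and connecting them with two bands: one band threaded through the canceling $(-3, 3)$ tangles and one band threaded through the $(2n+1)$ tangle. The two bands have algebraic linking number that forces the boundary to be $K_n$ itself, and a direct Seifert framing computation (or a Euler characteristic count on the band presentation) verifies $g(S) = 1$. Incompressibility follows because $S$ is a Seifert surface of minimal genus, and the slope of $\partial S$ on $\partial M_n$ is the canonical longitude, i.e.\ slope $0$; this is a consequence of $S$ being a Seifert surface.

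Next, I would cap off $S$ inside $M_n(0)$ to obtain a closed torus $\widehat{S}$, and verify it is incompressible in $M_n(0)$ using Lemma~3 of \cite{Hatcher1985IncompressibleSI} together with the incompressibility of $S$ in $M_n$. To identify the complementary region $\mathcal{O}$, I would analyze the two pieces of $M_n \setminus S$. In the pretzel diagram, cutting along $S$ splits the complement along the genus two surface $\partial H = S \cup A \cup S$ (as in the proof of Theorem~\ref{thm:main}); the complementary piece corresponds to what remains after pushing the $(-3, 3)$ tangle away from the surface, and this piece is a standard ``double cable'' structure. After the $0$-filling glues in a solid torus that caps $S$ to $\widehat{S}$, the resulting region $\mathcal{O}$ can be exhibited as the exterior of two parallel curves on a standard unknotted torus in $S^3$ with winding data $(2, 4)$, i.e.\ the complement of $T_{2,4}$.

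The cleanest way to make step three rigorous is a handle calculus argument: draw a Kirby diagram for $M_n(0)$ adapted to $S$, slide handles to decouple the $(-3,3)$ part of the diagram from the $(2n+1)$ part, and identify the remainder as $S^3$ minus the $(2,4)$-torus link. Finally, the Seifert fibration on $S^3$ by Hopf fibers restricts to a Seifert fibration of $S^3 \setminus T_{2,4}$ whose base orbifold is an annulus (with no cone points, since each component of $T_{2,4}$ is a regular fiber of the Hopf fibration restricted to the standard torus), giving the last assertion. The main obstacle will be step three: explicitly identifying the complementary region with the $T_{2,4}$ exterior. I expect to handle this by a sequence of handle slides that reduce the relevant Kirby diagram to the standard $0$-framed diagram of the $(2,4)$-torus link, following Baldwin–Sivek's analysis in \cite{BaldwinSivek}.
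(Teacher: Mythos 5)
The paper does not actually prove this proposition; it is cited verbatim from \cite{BaldwinSivek} with no in-text argument, so there is no in-paper proof to compare against. Your proposal is therefore an attempt at an independent proof, and it has some concrete problems.

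First, the surface you construct cannot be a genus-one Seifert surface: two disks connected by two bands has Euler characteristic $2 - 2 = 0$, so (if orientable and connected) it is an annulus, not a once-punctured torus, which has $\chi = -1$. For a pretzel knot $P(p_1, p_2, p_3)$ with all $p_i$ odd, the standard pretzel surface uses two disks and \emph{three} twisted bands, one through each column, giving $\chi = -1$; there is also no sense in which the $(-3)$- and $(+3)$-tangles ``cancel'' at the level of the surface. Second, the appeal to Lemma~3 of \cite{Hatcher1985IncompressibleSI} runs in the wrong direction: that lemma says incompressible surfaces in a filling come from incompressible surfaces in the unfilled manifold, but what you need is that the capped-off surface remains incompressible after filling, which is Gabai's theorem (the reference \cite{Gabai} used earlier in the paper). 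Third, and most seriously, the Seifert-fibration argument in the last step is wrong: the components of $T_{2,4}$ are $(1,2)$-curves on the standard torus, not $(1,1)$-curves, so they are \emph{not} Hopf fibers, and the Hopf fibration does not restrict to a fibration of the link exterior. The Seifert fibration of $S^3$ in which these curves are regular fibers is the $(1,2)$-fibration, which has an exceptional fiber of order $2$; removing two regular fibers would give a base orbifold $A^2(2)$ (an annulus with a single order-$2$ cone point), i.e.\ a cable space, not the trivial circle bundle over the annulus. This discrepancy with the stated conclusion needs to be resolved by checking \cite{BaldwinSivek} directly, because the distinction between a cable space and a product $T^2 \times I$ is precisely what determines whether this example is excluded by the second hypothesis of Theorem~\ref{thm:main}. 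Finally, the Kirby-calculus identification of the complementary region is only proposed, not carried out, and that is where the real content of the claim lives.
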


This example violates the second hypothesis of Theorem \ref{thm:main}. It would be interesting to determine whether or not the Seifert surface of $M_n$ is detected by an ideal point of a norm curve of $X(M_n)$. If so, then the limiting character at that ideal point must be reducible, providing the first example of an ideal point over which $A_{k(C)}$ does not tautologically extend. If not, this would be a rare example of a non-fiber Seifert surface not detected by an ideal point of a norm curve. Either way, Seifert surfaces of $M_n$ are expected to be examples of rare phenomena in Culler-Shalen theory. 

\bibliography{azumaya2.bib}
\bibliographystyle{plain}

\end{document}